\numberwithin{equation}{section}
\newtheorem{Theorem}{Theorem}[section]
\newtheorem{Lemma}[Theorem]{Lemma}
\newtheorem{Proposition}[Theorem]{Proposition}
\newtheorem{Corollary}[Theorem]{Corollary}
\newtheorem{Definition}[Theorem]{Definition}
\newtheorem{Remark}[Theorem]{Remark}
\renewcommand{\leq}{\leqslant}
\renewcommand{\geq}{\geqslant}
\renewcommand{\ge}{\geqslant}
\renewcommand{\Re}{\operatorname{Re}}
\DeclareMathOperator{\supp}{supp}
\DeclareMathOperator{\Cof}{Cof}
\renewcommand{\div}{\operatorname{div}}
\newcommand{\Id}{\operatorname{Id}}
\newcommand{\ds}{\displaystyle}
\newcommand{\mr}{\mathcal{R}}
\newcommand{\mf}{\mathcal{F}}
\newcommand{\orh}{\overline{\rho}}
\newcommand{\ot}{\overline{\vartheta}}
\newcommand{\ms}{\mathcal{S}}
\newcommand{\mx}{\mathcal{X}}
\newcommand{\rhob}{\rho_{\flat}}
\renewcommand{\mid}{\ ; \ }
\newcommand{\norm}[1]{\left\Vert#1\right\Vert}
\newcommand{\vect}[1]{\left[#1\right]^{\top}}
\newcommand{\rd}{{\rm d}}
\title[]{Existence and uniqueness of strong solutions for the system of interaction between a compressible Navier-Stokes-Fourier fluid
and a damped plate equation}
\author{Debayan Maity}
\address{Centre for Applicable Mathematics, TIFR, \\
Post Bag No. 6503, GKVK Post Office, Bangalore-560065, India.
\\
(\href{mailto:Debayan.Maity@math.cnrs.fr}{debayan.maity@math.cnrs.fr})}
\author{Tak\'eo Takahashi}
\address{Universit\'e de Lorraine, CNRS, Inria, IECL, F-54000 Nancy, France.\\
(\href{mailto:takeo.takahashi@inria.fr}{takeo.takahashi@inria.fr})}
\date{\today}
\thanks{Debayan Maity was partially supported by INSPIRE faculty fellowship (IFA18-MA128) and by Department of Atomic Energy, 
Government of India, under project no. 12-R \& D-TFR-5.01-0520. Tak\'eo Takahashi was partially supported by the ANR research project IFSMACS (ANR-15-CE40-0010).}
\begin{document}                  

\maketitle

\begin{abstract}
The article is devoted to the mathematical analysis of  a fluid-structure interaction system where 
the fluid is compressible and heat conducting and where the structure is deformable and located on a part of the boundary of the fluid domain. 
The fluid motion is modeled by the compressible Navier-Stokes-Fourier system and the structure displacement is described by a structurally damped plate equation.
Our main results are the existence of strong solutions in an $L^p-L^q$ setting for small time or for small data. Through a change of variables and a fixed point argument, the proof of the main results is mainly based on the maximal regularity property of the corresponding linear systems. For small time existence, this property is obtained by
decoupling the linear system into several standard linear systems whereas for global existence and for small data, the maximal regularity property is proved by showing that the corresponding linear coupled {\em fluid-structure} operator is $\mathcal{R}-$sectorial.
\end{abstract}

\bigskip

\noindent{\bf Keywords.}  fluid-structure interaction,  compressible Navier–Stokes–Fourier system, maximal regularity, $\mathcal{R}$-sectorial operators, strong solutions\\
\noindent {\bf AMS subject classifications.} 35Q30, 76D05, 76N10

\tableofcontents

\section{Introduction} 
In this work, we study the interaction between a viscous compressible heat conducting fluid and a viscoelastic structure located on a part of the fluid domain boundary. 
More precisely, we consider a smooth bounded domain $\mathcal{F} \subset \mathbb{R}^{3}$ such that its boundary $\partial \mathcal{F}$ contains a flat part 
$\Gamma_S:=\ms \times \{0\},$ where $\ms$ is a smooth bounded domain of $\mathbb{R}^{2}.$ We also set
$$
\Gamma_0=\partial \mathcal{F} \setminus \overline{\Gamma_S}.
$$ 
The set $\Gamma_{0}$ is rigid and remains unchanged whereas 
on the flat part, 
we assume that there is a plate that can deform only in the transversal direction, and 
if we denote by $\eta$ the corresponding displacement, then $\Gamma_S$ is transformed into
\begin{equation*}
\Gamma_{S}(\eta) :=  \left\{ [x_1,x_2, \eta(x_1,x_2)]^\top \ ; \  [x_1,x_2]^\top \in \ms \right\}.
\end{equation*}
In our study, we consider only displacements $\eta$ regular enough and satisfying the boundary conditions (the plate is clamped):
\begin{equation}\label{rev0.0}
\eta= \nabla_s \eta \cdot n_{S} = 0 \quad \text{on} \ \partial \mathcal{S}
\end{equation}
and a condition insuring that the deformed plate does not have any contact with the other part of the boundary of the fluid domain:
\begin{equation}\label{rev0.1}
\Gamma_{0} \cap \Gamma_{S}(\eta) = \emptyset.
\end{equation}
We have denoted by $n_{S}$ the unitary exterior normal to $\partial \mathcal{S}$ and in the whole article we add the index $s$ in the gradient and in the Laplace operators if they apply to functions defined on $\mathcal{S}\subset \mathbb{R}^2$ (and we keep the usual notation for functions defined on a domain of $\mathbb{R}^3$).

With the above notations and hypotheses, $\Gamma_{0} \cup \overline{\Gamma_{S}(\eta)}$ corresponds to a closed simple and regular surface whose interior is the fluid domain
$\mathcal{F}(\eta)$. In what follows, we consider that $\eta$ is also a function of time and its evolution is governed by a damped plate equation.

\medskip

\begin{center}
\begin{tikzpicture}
\draw [densely dashed] (2,2)--(10,2);
\draw[draw=black]  plot [smooth] coordinates {(2,2) (1.3,1.75) (0,1) (2,-1) (3,1) (5,-1) (10,0) (12,-0.5) (10.5, 1.75) (10,2)};
\draw [domain=2:10,samples=50,thick] plot (\x,{2-4*((\x-2)/8)^2*((\x-10)/8)^2*(\x-(2*2+10)/3)*(\x-(2+5*10)/6)});
\draw (6,0) node[above] {$\mathcal{F}(\eta)$};
\draw (6.3,3.2) node {$\Gamma_S(\eta)$};
\draw (5,-1.5) node {$\Gamma_0$};
\end{tikzpicture}
\end{center}

In $\mathcal{F}(\eta(t))$, we assume that there is a viscous compressible heat conducting fluid and we denote by
$\widetilde \rho$, $\widetilde v$, and $\widetilde \vartheta$ respectively its density, velocity and temperature.
The equations modeling the evolution of these quantities can be written as follows:
\begin{equation} \label{eq:main01}
\begin{dcases}
\partial_{t} \widetilde \rho + \div (\widetilde \rho \widetilde v ) = 0, &  t> 0,\ x \in  \mathcal{F}(\eta(t)), 
\\ 
\widetilde \rho \left( \partial_{t} \widetilde v + (\widetilde v \cdot \nabla) \widetilde v \right)  
	- \div \mathbb{T}(\widetilde v, \widetilde \pi) = 0, &  t> 0, \ x \in \mathcal{F}(\eta(t)), 
\\
c_{v} \widetilde \rho \left(\partial_{t} \widetilde \vartheta + \widetilde v \cdot \nabla \widetilde \vartheta \right) 
	+ \widetilde \pi \div \widetilde v  - \kappa \Delta \widetilde \vartheta  
	= \alpha (\div \widetilde v)^{2} + 2\mu \left|\mathbb{D}\widetilde v\right|^2 &    t> 0,\  x \in  \mathcal{F}(\eta(t)), \\
\partial_{tt} \eta + \Delta_{s}^{2} \eta - \Delta_{s} \partial_{t}\eta = \mathbb{H}_{\eta}(\widetilde v,\widetilde \pi) 
	& t > 0,\  s \in \ms,
\end{dcases}
\end{equation}
with the boundary conditions
\begin{equation} \label{eq:bc}
\begin{dcases}
\widetilde  v(t, s, \eta(t,s)) = \partial_{t} \eta(t,s) e_{3} & t > 0,\  s \in \ms,  \\
\widetilde  v = 0 & t > 0,\  x \in \Gamma_{0}, \\
\frac{\partial \widetilde \vartheta}{\partial \widetilde n} (t,x) = 0  & t > 0, \  x \in \partial  \mf({\eta(t)}),\\
\eta  = \nabla_s \eta \cdot n_{S}= 0 & t > 0,\   s \in \partial \ms,
\end{dcases}
\end{equation}
and the initial conditions
\begin{equation} \label{eq:ini}
\begin{dcases}
 \eta(0,\cdot) = \eta_{1}^{0}, \quad \partial_{t}\eta(0,\cdot) = \eta_{2}^{0}  \quad \text{in} \ \ms,  \\
 \widetilde \rho(0,\cdot) = \widetilde \rho^{0}, \quad  \widetilde  v(0,\cdot) = \widetilde  v^{0}, \quad  
 \widetilde \vartheta(0,\cdot) = \widetilde \vartheta^{0} \quad \text{in}\  \mf(\eta_{1}^{0}).
 \end{dcases}
\end{equation}
In the above system $(e_1,e_2,e_3)$ is the canonical basis of $\mathbb{R}^3$, the fluid stress tensor is defined by
$$
\mathbb{T}(\widetilde v,\widetilde \pi) = 2\mu \mathbb{D}(\widetilde v) + ( \alpha \div \widetilde v - \widetilde \pi)  I_3, \quad \mathbb{D}(\widetilde v) = \frac12 \left( \nabla\widetilde  v + \nabla \widetilde v^{\top}\right),
$$
and the pressure law is given by 
\begin{equation} \label{eq:pressure-law}
\widetilde \pi = R_0 \widetilde \rho \widetilde \vartheta+ \pi_0.
\end{equation}
The above physical constants satisfy 
\begin{equation} \label{visco-relation}
R_0>0,
\quad \mu > 0 \mbox{ (viscosity)}, \quad  \alpha + \frac{2}{3} \mu > 0, \quad \kappa>0, \quad c_v>0, \quad \pi_0\in \mathbb{R}.
\end{equation}
For any matrix $A,B\in \mathcal{M}_d(\mathbb{R})$, we use the canonical scalar product and norm:
$$
A:B = \sum_{i,j} a_{ij} b_{ij}, \quad |A|=\sqrt{A:A}.
$$
We have set  
$$
\nabla_s = [\partial_{y_1},\partial_{y_2}]^\top, 
\quad 
\Delta_{s}=\partial_{y_1}^2+\partial_{y_2}^2.
$$
The function $\mathbb{H}$ is defined by 
\begin{equation} \label{eq:force-fsi}
\mathbb{H}_{\eta}(\widetilde v,\widetilde \pi) =  - \sqrt{1 + |\nabla_{s} \eta|^{2}} \left( \mathbb{T}(\widetilde v,\widetilde \pi) \widetilde{n} \right)|_{\Gamma_S(\eta(t))} \cdot e_{3},
\end{equation}
where 
\begin{equation*}
\widetilde{n} =  \frac{1}{\sqrt{1 + |\nabla_{s} \eta|^{2}}} \left[ -\nabla_{s} \eta , 1 \right]^{\top},
\end{equation*}
is the unit normal to $\Gamma_{S}(\eta(t))$ outward $\mf(\eta(t)).$ 
Let us mention that the boundary conditions \eqref{eq:bc} are obtained by assuming that the fluid does not slip on the boundaries and that the plate is thermally insulated.

Fluid-structure interaction problems have been an active area of research among the engineers, physicist and mathematicians over the last few decades
due to the numerous practical applications and the corresponding scientific challenges. 
The type of model considered in this article  appears in the design of many engineering structures, e.g aircraft and bridges etc., (\cite{AGW})  as well as in biomechanics (\cite{bio-book}).

Let us mention some related works from the literature. In the last two decades, there has been considerable number of works on similar fluid-structure systems where the fluid is modelled by incompressible flows. We refer to, for instance \cite{biomedical} and references therein for a concise description of recent progress regarding incompressible flows interacting with deformable structure (beam or plate) located on a part of the fluid domain boundary. Moreover, in some recent articles (\cite{GHL, BT19, BT19-bis}) existence and uniqueness of strong solutions (either local in time or for small initial data) were proved without the additional damping term (i.e., without the term $-\Delta_{s} \partial_{t} \eta$) in the beam/plate equation. 

Concerning compressible fluids interacting with plate/beam equations through boundary of the fluid domain, there are only few results available in the literature. Global existence of weak solutions until the structure touches the boundary of the  fluid domain were proved in \cite{FO, BS18}.  Local in time existence of strong solutions in the corresponding  $2D/1D$ case was recently obtained  in \cite{SM18}. Well-posedness and stability of linear compressible fluid-structure systems were studied in \cite{IC, AGW}.

Let us mention that all the above mentioned works correspond to a ``Hilbert'' space framework. In this article, we are interested in studying existence and uniqueness of strong solutions, local in time or global in time for small initial data, within an ``$L^{p}-L^{q}$'' framework. More precisely, we look for solutions in the spaces of functions which are $L^{p}$ with respect to time and $L^{q}$ with respect to space variable, with arbitrary $p, q > 1.$  In the context of fluid-solid interaction problems, there are only few articles available in the literature that studies well-posedness in an  $L^{p}-L^{q}$ framework. Let us mention
 \cite{GGH, MT18} (viscous incompressible fluid and rigid bodies), \cite{HiebMur, Tucsnak17a, HMTT} (viscous compressible fluid and rigid bodies) and \cite{MT20, DS19} (viscous incompressible fluid interacting with viscoelastic structure located at the boundary of the fluid domain). In fact,  this article is a compressible counterpart of our previous work  \cite{MT20}.

The main novelties that we bring in this article are :
\begin{itemize}
\item The full nonlinear free boundary system coupling viscous compressible Navier-Stokes-Fourier system and a viscoelastic structure located on a part of the fluid domain has not, at the best of our knowledge,  been studied in the literature. 
\item The existence and uniqueness results are proved in $L^{p}-L^{q}$ setting. 
\item Global in time existence for small initial data seems to be a new result for such coupled systems. 
\end{itemize}
Let us emphasize that using the $L^{p}-L^{q}$ setting allows us to weaken the regularity on the initial conditions (see for instance \cite{SM18}).
Moreover, this ``$L^{p}-{L^{q}}$'' framework is interesting even for studies in fluid-structure interaction problems done in the ``$L^2-L^2$'' framework: let us quote for instance
the uniqueness of weak solutions (\cite{GlassSueur, Bravin}), the asymptotic behavior for large time (\cite{EHL, EMT}), and the
asymptotic behavior for small structures (\cite{LacTak}). 

\subsection{Notation} 
To state our main results, we need to introduce some notations for the functional spaces.  For $\Omega\subset \mathbb{R}^{n}$ is an open set,  $q > 1$ and $ k \in \mathbb{N},$ we denote by $L^{q}(\Omega)$ and $W^{k,q}(\Omega)$ the standard Lebesgue and Sobolev spaces respectively. $W^{s,q}(\Omega)$, with $q>1$ and 
$s\in \mathbb{R}_+^*$, denotes the usual Sobolev-Slobodeckij space.  Moreover, $W^{k,q}_{0}(\Omega)$ is the completion of $C_{c}^{\infty}(\Omega)$ with respect to the $W^{k,q}(\Omega)$ norm. 
Let $k,m \in \mathbb{N}$, $k<m$.
For $1\leqslant p < \infty$, $1\leqslant q < \infty$, we consider the standard definition of the Besov spaces by real interpolation of Sobolev spaces
$$
B^{s}_{q,p}(\mathcal{F}) =  \left( W^{k,q}(\mathcal{F}), W^{m,q}(\mathcal{F})\right)_{\theta, p} \mbox{ where }  
s = (1 - \theta) k + \theta m, \quad \theta\in (0,1). 
$$
We refer to \cite{AF} and \cite{Triebel} for a detailed presentation of the Besov spaces. 
We denote by $C^k_b$ is the set of continuous and bounded functions with derivatives continuous and bounded up to the order $k$. 
For $s \in (0,1)$ and a Banach space $U,$ $F^{s}_{p,q}(0,T,U)$
stands for $U$ valued Lizorkin-Triebel space. For precise definition
of such spaces we refer to \cite{Triebel}. 
If $T\in (0,\infty]$, we set 
\begin{gather*}
W^{1,2}_{p,q}((0,T) ; \mathcal{F}) = L^{p}(0,T;W^{2,q}(\mathcal{F})) \cap W^{1,p}(0,T;L^{q}(\mathcal{F})), \\
W^{2,4}_{p,q}((0,T) ; \mathcal{S}) = L^{p}(0,T;W^{4,q}(\mathcal{S})) \cap W^{1,p}(0,T;W^{2,q}(\mathcal{S})) \cap W^{2,p}(0,T;L^{q}(\mathcal{S})), \\
W^{1,2}_{p,q}((0,T) ; \mathcal{S}) = L^{p}(0,T;W^{2,q}(\mathcal{S})) \cap W^{1,p}(0,T;L^{q}(\mathcal{S})). 
\end{gather*}
We have the following embeddings (see, for instance, \cite[Theorem 4.10.2, p.180]{Amann}),
\begin{equation}\label{embama}
W^{1,2}_{p,q}((0,T) ; \mf) \hookrightarrow C_{b}^0([0,T);B^{2(1-1/p)}_{q,p}(\mf)),
\end{equation}
\begin{equation}\label{embama2}
W^{2,4}_{p,q}((0,T) ; \mathcal{S}) \hookrightarrow C_{b}^0([0,T);B^{2(2-1/p)}_{q,p}(\mathcal{S}))
\cap C_{b}^1([0,T);B^{2(1-1/p)}_{q,p}(\mathcal{S})).
\end{equation}
In particular, in what follows, we use the following norm for $W^{1,2}_{p,q}((0,T) ; \mathcal{F})$:
$$
\|f\|_{W^{1,2}_{p,q}((0,T) ; \mathcal{F})}:=
\|f\|_{L^{p}(0,T;W^{2,q}(\mathcal{F}))}
+
\|f\|_{W^{1,p}(0,T;L^{q}(\mathcal{F}))}
+
\|f\|_{C_{b}^0([0,T);B^{2(1-1/p)}_{q,p}(\mf))}
$$
and we proceed similarly for the two other spaces. 

We also introduce functional spaces with time decay. We write for any $\beta\in \mathbb{R}$ 
$$
\mathbb{E}_\beta : \mathbb{R} \to \mathbb{R}, t\mapsto e^{\beta t}.
$$
We denote by $L^p_{\beta}(0,\infty)$ the space $\mathbb{E}_{-\beta}L^p(0,\infty)$, that is the set of functions $f$ such that $t\mapsto e^{\beta t}f(t)$ is in $L^p(0,\infty)$.
The corresponding norm is
$$
\|f\|_{L^p_{\beta}(0,\infty)}:=\|\mathbb{E}_{\beta} f\|_{L^p(0,\infty)}.
$$
We proceed similarly for all spaces on $(0,\infty)$ or on $[0,\infty)$. 

Finally, we also need to introduce functional spaces for the fluid density, velocity and temperature depending on the displacement $\eta$ of the structure. 
Assume $T \in (0, \infty]$ and that $\eta\in W^{2,4}_{p,q}((0,T) ; \mathcal{S})$ satisfies \eqref{rev0.0} and  \eqref{rev0.1}.
We show in \cref{sec:local} that there exists a mapping $X=X_\eta$ such that $X(t,\cdot)$ is a $C^{1}$-diffeomorphism from $\mf$ onto $\mf(\eta(t))$
and for any function $\widetilde f$ defined for $t\in (0,T)$ and $x\in \mf(\eta(t))$, we then define
$$
f(t,y)  := \widetilde f(t,X(t,y)) \quad (t\in (0,T), \ y\in \mf).
$$
Then we define the following sets as follows
\begin{align*}
\widetilde f\in W^{r,p}(0,T;W^{s,q}(\mf(\eta(\cdot) ))) &\quad \text{if} \quad f \in W^{r,p}(0,T;W^{s,q}(\mf)), \\
\widetilde f\in W^{1,2}_{p,q}((0,T) ; \mf(\eta(\cdot) )) &\quad \text{if} \quad f \in W^{1,2}_{p,q}((0,T) ; \mf), \\
\widetilde f\in C^0([0,T];B^{2(1-1/p)}_{q,p}(\mf(\eta(\cdot)))) &\quad \text{if} \quad  f \in C^0([0,T];B^{2(1-1/p)}_{q,p}(\mf)
\end{align*}
and a similar definition for all the other spaces.

\subsection{Statement of the main results}
Let us give the conditions we require on $(p,q)$ and on the initial data for the system \eqref{eq:main01}--\eqref{eq:force-fsi}: 
\begin{equation}\label{pqpqpq}
2 < p < \infty, \quad 3 < q< \infty, \quad \frac{1}{p} + \frac{1}{2q} \neq \frac12,
\end{equation}
\begin{gather}
\eta_{1}^{0} \in B^{2(2-1/p)}_{q,p}(\ms), \quad \eta_{2}^{0} \in B^{2(1-1/p)}_{q,p}(\ms), \quad  
\widetilde{\rho}^0 \in W^{1,q}({\mf({\eta_{1}^{0}})}), \quad
\min_{\overline{\mf(\eta_{1}^{0})}}  \widetilde{\rho}^{0} > 0,  \label{cc1}\\
\widetilde v^{0} \in B^{2(1-1/p)}_{q,p}(\mathcal{F}({\eta_{1}^{0}}))^{3}, \qquad  \widetilde \vartheta^{0} \in B^{2(1-1/p)}_{q,p}(\mathcal{F}({\eta_{1}^{0}})),   \label{cc2}
\end{gather} 
with the compatibility conditions 
\begin{gather}
\eta_{1}^{0} = \ds \nabla \eta_{1}^{0} \cdot n_{S} = \eta_{2}^{0}  = 0 \mbox{ on } \partial \ms,  \quad \widetilde v^{0} = 0 \mbox{ on } \Gamma_{0}, \quad \widetilde v^{0} = \eta_{2}^{0} e_{3} \mbox{ on } \Gamma_{S}(\eta_{1}^{0}),  \label{cc3} \\
\ds \nabla \eta_{2}^{0} \cdot n_{S} \mbox{ on } \partial \ms \quad \text{and} \quad \frac{\partial \widetilde \vartheta^{0}}{\partial n} = 0 \mbox{ on } \partial \mf(\eta_{1}^{0}), \quad  \mbox{ if } \quad  \frac{1}{p} + \frac{1}{2q} < \frac{1}{2}. \label{cc4}
\end{gather}
Note that, all the traces in the above relation makes sense for our choice of $p$ and $q$ (see for instance, \cite[p. 200]{Triebel}). 

We also need a geometrical condition on the initial deformation.
Using that $\mathcal{F}$ is a smooth domain, there exist 
two smooth surfaces $\eta_- : \mathcal{S} \to \mathbb{R}_-^*$,  $\eta_+ : {\mathcal{S}} \to \mathbb{R}_+^*$ such that
\begin{equation}\label{geo00-bis}
\left\{[y_1,y_2,y_3]^\top\in {\mathcal{S}}\times \mathbb{R} \ ; \ y_3\in (\eta_-(y_1,y_2),0)\right\} \subset \mathcal{F},
\end{equation}
\begin{equation}\label{geo01-bis}
\left\{[y_1,y_2,y_3]^\top\in {\mathcal{S}}\times \mathbb{R} \ ; \ y_3\in (0,\eta_+(y_1,y_2))\right\} \subset \mathbb{R}^3\setminus \overline{\mathcal{F}}.
\end{equation}
Then our geometrical condition on the initial deformation writes
\begin{equation}\label{geo02}
\eta_- < \eta_1^0 <\eta_+ \quad \text{in} \ {\mathcal{S}}.
\end{equation}
This yields in particular that $\Gamma_{0} \cap \Gamma_S(\eta_{1}^{0}) = \emptyset$.
According to the geometry, we can in some situation remove the condition $\eta_1^0<\eta_+$.
Note that this condition is not a smallness condition, $\eta_+$ and $\eta_-$ do not need to be small.

Our main results are the following two theorems. The first one is the local in time existence and uniqueness : 
\begin{Theorem} \label{th:local-main}
Assume $(p,q)$ satisfies \eqref{pqpqpq} and that $[\widetilde \rho^{0},  \widetilde v^{0}, \widetilde \vartheta^{0}, \eta_{1}^{0}, \eta_{2}^{0}]^{\top}$ 
satisfies \eqref{cc1}--\eqref{cc4} and \eqref{geo02}.  
Then there exists  $T > 0,$ depending only on initial data, such that the 
system \eqref{eq:main01}--\eqref{eq:force-fsi} admits a unique strong solution 
$[\widetilde \rho, \widetilde v, \widetilde \vartheta, \eta]^{\top}$ satisfying 
\begin{gather*}
\widetilde \rho \in W^{1,p}(0,T; W^{1,q}(\mf(\eta(\cdot)))), \quad 
\widetilde v \in W^{1,2}_{p,q}((0,T) ; \mf(\eta(\cdot) ))^{3}, \\
\vartheta \in W^{1,2}_{p,q}((0,T) ; \mf(\eta(\cdot) )), \quad
\eta \in W^{2,4}_{p,q}((0,T) ; \mathcal{S}), \\
\Gamma_{0} \cap \Gamma_S(\eta(t)) = \emptyset \quad (t\in [0,T]), \quad
\widetilde \rho(t,x)>0 \quad (t\in [0,T], \ x\in \overline{\mf(\eta(t))}).
\end{gather*}
\end{Theorem}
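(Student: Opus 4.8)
The plan is to reduce the free-boundary system \eqref{eq:main01}--\eqref{eq:force-fsi} to a problem on the fixed reference configuration $\mf$ and $\ms$ via the change of variables $X=X_\eta$ announced in the notation, and then to close a fixed point argument on a small time interval using the maximal $L^p$--$L^q$ regularity of the underlying linear systems. Concretely, writing $\rho(t,y)=\widetilde\rho(t,X(t,y))$, $v(t,y)=\widetilde v(t,X(t,y))$, $\vartheta(t,y)=\widetilde\vartheta(t,X(t,y))$, the transported system becomes a system on the cylinder $(0,T)\times\mf$ with coefficients depending on $\eta$ and its derivatives through the Jacobian $\nabla X$, $\Cof(\nabla X)$ and $\partial_t X$. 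The leading-order part, after freezing $\eta$ at the initial datum (or at $0$), decouples into: a transport equation for $\rho$ (driven by $\div v$); a parabolic Lam\'e-type system for $v$ coupled at the boundary $\Gamma_S$ to the damped plate equation $\partial_{tt}\eta+\Delta_s^2\eta-\Delta_s\partial_t\eta=\mathbb H_\eta(v,\pi)$; and a parabolic equation for $\vartheta$ with Neumann condition. All the genuinely nonlinear terms — the quadratic heat source $\alpha(\div v)^2+2\mu|\mathbb D v|^2$, the convective terms, the $\rho\vartheta$ dependence of the pressure through \eqref{eq:pressure-law}, and the geometric nonlinearities coming from $X_\eta$ — are placed on the right-hand side as data.

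\textbf{Key steps.} First I would construct the diffeomorphism $X_\eta$ and record its regularity and the estimates on $\nabla X-\Id$, $\partial_t X$ in terms of $\|\eta\|_{W^{2,4}_{p,q}}$, together with the fact that for $T$ small these are small in the relevant norms (using the embedding \eqref{embama2} and $\eta(0)=\eta_1^0$); the geometric condition \eqref{geo02} guarantees that $X_\eta(t,\cdot)$ stays a diffeomorphism and $\Gamma_0\cap\Gamma_S(\eta(t))=\emptyset$ on $[0,T]$. Second, I would state and invoke the maximal regularity result for the \emph{decoupled} linearized system — proved in \cref{sec:local} by splitting it into a plate equation, a compressible Stokes/Lam\'e system with the kinematic coupling $v|_{\Gamma_S}=\partial_t\eta\, e_3$, a transport equation for $\rho$, and a heat equation — which yields a bounded solution operator from the data spaces (with the compatibility conditions \eqref{cc3}--\eqref{cc4}) into $W^{1,p}(0,T;W^{1,q}(\mf))\times W^{1,2}_{p,q}((0,T);\mf)^3\times W^{1,2}_{p,q}((0,T);\mf)\times W^{2,4}_{p,q}((0,T);\ms)$. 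Third, I would set up the fixed point map $\mathcal N$ on a ball in this space: given $(\rho,v,\vartheta,\eta)$, compute the nonlinear right-hand sides, solve the linear system, and obtain a new quadruple. Fourth, I would prove that for $T$ small enough $\mathcal N$ maps a suitable ball into itself and is a contraction, using that all nonlinear terms carry either an explicit factor of $T$ (or $T^\theta$) from time integration, or a factor of a small quantity like $\|\nabla X-\Id\|$, $\|v\|$, $\|\rho-\rho^0\|$; the positivity $\widetilde\rho>0$ on $[0,T]$ follows by continuity from $\min\widetilde\rho^0>0$ after shrinking $T$. The unique fixed point, transported back by $X_\eta^{-1}$, is the desired strong solution; uniqueness follows from the contraction (or a separate energy-type argument on the difference of two solutions).

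\textbf{Main obstacle.} The delicate point is the treatment of the density. The transport equation $\partial_t\rho+\ldots=0$ has no smoothing, so $\rho$ only gains the regularity $W^{1,p}(0,T;W^{1,q})$ and one must carefully track the $W^{1,q}$-in-space norm of $\rho$ through the nonlinear pressure term $\pi=R_0\rho\vartheta+\pi_0$ feeding into the momentum equation, making sure the estimates close without losing derivatives; one typically writes $\rho=\rho^0+$(small) and treats $\nabla\rho$ via a commutator/transport estimate in $L^p(0,T;W^{1,q})$. Closely related is the compatibility of the kinematic coupling $v|_{\Gamma_S(\eta)}=\partial_t\eta\,e_3$ with the change of variables (the trace of the transported $v$ on the flat $\Gamma_S$ must match $\partial_t\eta\,e_3$ up to controllable nonlinear corrections), and the verification that the constraint $\div v = $ data is compatible with the kinematic boundary condition via the divergence theorem, which forces a correction of $\partial_t\eta$ by its mean — handled as in \cite{MT20}. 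Finally, because the norm \eqref{pqpqpq} excludes the critical value $\frac1p+\frac1{2q}=\frac12$, one has a clean set of compatibility conditions \eqref{cc3}--\eqref{cc4}, and the bookkeeping of which conditions are actually needed (only \eqref{cc4} when $\frac1p+\frac1{2q}<\frac12$) must be consistent with the trace theorems used in the linear theory.
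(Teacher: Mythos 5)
Your overall architecture (transform to the fixed domain, freeze the coefficients at the initial data, prove maximal $L^p$--$L^q$ regularity for the resulting linear system, and close by a contraction for small $T$) matches the paper's. However, there is a genuine gap in your treatment of the density, and it traces back to the choice of change of variables. You describe $X=X_\eta$ as a geometric map whose coefficients are controlled ``in terms of $\|\eta\|_{W^{2,4}_{p,q}}$'', and you propose to put ``the convective terms'' on the right-hand side as data. With such a non-Lagrangian change of variables the continuity equation keeps a genuine transport term of the form $w\cdot\nabla\rho$, and this term cannot be treated as a source: to obtain $\rho\in W^{1,p}(0,T;W^{1,q}(\mf))$ from the linear theory you need the source $f_1$ in $L^p(0,T;W^{1,q}(\mf))$, and $\nabla(w\cdot\nabla\rho)$ involves $\nabla^2\rho$, which is not controlled in your solution space. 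The alternative you hint at (``commutator/transport estimate'') keeps the transport term in the linear problem, but then the contraction estimate for the difference of two iterates again loses a derivative on the density (the difference of the two transport fields hits $\nabla\rho^{(2)}$); this is the classical obstruction and would force you to contract in a weaker norm, none of which you set up. The paper avoids the issue entirely: it composes a geometric diffeomorphism $X^0$ built from $\eta_1^0$ via the flow \eqref{geo5}--\eqref{geo6} with the \emph{Lagrangian} flow \eqref{X-def} of the fluid velocity, i.e. $X(t,y)=X^0(y)+\int_0^t v(r,y)\,\rd r$. In these coordinates the term $\widetilde v\cdot\nabla\widetilde\rho$ disappears and the continuity equation \eqref{NL0.0} becomes a pointwise ODE in time, so that $\rho\in W^{1,p}(0,T;W^{1,q}(\mf))$ follows immediately from $v\in L^p(0,T;W^{2,q}(\mf))$ and the contraction closes in the stated norms. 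This device is the essential point of the local proof and is missing from your proposal.

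Two smaller remarks. The ``correction of $\partial_t\eta$ by its mean'' coming from the divergence theorem is an artifact of the incompressible problem in \cite{MT20}; here the fluid is compressible, there is no divergence constraint, and no such correction appears. Also, the paper's fixed point acts on a ball of source terms $[f_1,f_2,f_3,g,h]^\top$ in $\mathcal{R}_{T,p,q}$ rather than on a ball of solutions, and the linear system is solved in cascades (plate, then velocity, then density, with the heat equation independent) rather than as a monolithic coupled operator; these are acceptable variants, but they do not repair the density issue above.
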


Our second main result states the global existence and uniqueness under a smallness condition on the initial data. 
Let $\orh$ and $\ot$  be two given positive  constants. Let us take in the pressure law \eqref{eq:pressure-law}
\begin{equation} \label{pressure-new}
{\pi}_{0} =  - R_0 \orh \ot. 
\end{equation}
With the above choice of $\pi_{0}$, $\vect{\widetilde \rho, \widetilde v, \widetilde \vartheta, \eta} = \vect{\orh, 0, \ot, 0}$  is a steady state solution to the system \eqref{eq:main01}--\eqref{eq:force-fsi}. 

Then our result states as follows:
\begin{Theorem} \label{th:global-main}
Assume $(p,q)$ satisfies \eqref{pqpqpq}
and assume that $\orh$ and $\ot$ are two given positive constants such that \eqref{pressure-new} holds. 
Then there exist $\beta>0$  and $R > 0$ such that 
for any  $[\widetilde \rho^{0},  \widetilde v^{0}, \widetilde \vartheta^{0}, \eta_{1}^{0}, \eta_{2}^{0}]^{\top}$ satisfying \eqref{cc1}--\eqref{cc4}, and 
\begin{multline}\label{mainsmall}
\norm{\widetilde \rho^{0} - \orh}_{W^{1,q}(\mf(\eta_{1}^{0}))} + \norm{\widetilde v^{0}}_{B^{2(1-1/p)}_{q,p}(\mf(\eta_{1}^{0}))^{3}} +  \norm{\widetilde \vartheta^{0} - \ot}_{B^{2(1-1/p)}_{q,p}(\mf(\eta_{1}^{0}))} \\
+  \norm{\eta_{1}^{0}}_{B^{2(2-1/p)}_{q,p}(\ms)} + 
 \norm{\eta_{2}^{0}}_{B^{2(1-1/p)}_{q,p}(\ms)} \leqslant R,
\end{multline}
the system \eqref{eq:main01}--\eqref{eq:force-fsi} admits a unique strong solution $\vect{\widetilde \rho, \widetilde v, \widetilde \vartheta, \eta}$
satisfying 
\begin{gather*}
\widetilde\rho \in C^0_{b}([0,\infty);W^{1,q}(\mf(\eta(\cdot)))), \; \nabla \widetilde \rho \in {W^{1,p}_{\beta}(0,\infty;L^{q}(\mf(\eta(\cdot))))}^{3}, \; \partial_t \widetilde \rho \in {L^{p}_{\beta}(0,\infty;W^{1,q}(\mf(\eta(\cdot))))},  
\\
\widetilde v \in   W^{1,2}_{p,q,\beta}((0,T) ; \mf(\eta(\cdot)))^3,
\\
\widetilde \vartheta \in C^{0}_{b}([0,\infty);B^{2(1-1/p)}_{q,p}(\mf(\eta(\cdot)))), \quad \nabla \widetilde \vartheta \in L^{p}_{\beta}(0,\infty ; W^{1,q}(\mf(\eta(\cdot))))^{3}, \quad  
	\partial_t \widetilde \vartheta \in {L^{p}_{\beta}(0,\infty ; L^q(\mf(\eta(\cdot))))},  
\\
\eta \in C^{0}_{b}([0,\infty);B^{2(2-1/p)}_{q,p}(\ms)), \quad \eta \in L^{p}_{\beta}(0,\infty;W^{4,q}(\ms)) + L^{\infty}(0,\infty;W^{4,q}(\ms)), \\
  \partial_{t}\eta \in W^{1,2}_{p,q,\beta}((0,\infty) ; \mathcal{S}),
\end{gather*}
and
$$
\Gamma_{0} \cap \Gamma_S(\eta(t)) = \emptyset \quad (t\in [0,\infty)), \quad
\widetilde \rho(t,x)>0 \quad (t\in [0,\infty), \ x\in \overline{\mf(\eta(t))}).
$$
\end{Theorem}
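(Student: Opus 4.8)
\textbf{Proof strategy for \cref{th:global-main}.} The plan is to reduce the free-boundary problem on $\mf(\eta(t))$ to a problem on the fixed reference domain $\mf$ via the change of variables $X=X_\eta$ introduced in \cref{sec:local}, and then to run a fixed point argument on functional spaces with exponential time weight $\mathbb{E}_\beta$. First I would write $\rho(t,y)=\widetilde\rho(t,X(t,y))-\orh$, $v(t,y)$ for the pulled-back velocity, $\vartheta(t,y)=\widetilde\vartheta(t,X(t,y))-\ot$, and keep $\eta$ as is; the transformed system takes the schematic form
\begin{equation*}
\mathcal{L}\vect{\rho,v,\vartheta,\eta} = \mathcal{N}\left(\vect{\rho,v,\vartheta,\eta}\right), \qquad \vect{\rho,v,\vartheta,\eta}\big|_{t=0}=\vect{\rho^0,v^0,\vartheta^0,\eta_1^0,\eta_2^0},
\end{equation*}
where $\mathcal{L}$ is the linearization at the steady state $\vect{\orh,0,\ot,0}$ and $\mathcal{N}$ collects all the quadratic-and-higher remainder terms, including the ones coming from the geometry (the Jacobian of $X_\eta$, the transformed gradients, the transformed normal $\widetilde n$, and the boundary forcing $\mathbb{H}_\eta$). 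The key point is that, at the steady state with the normalization \eqref{pressure-new}, $\pi_0=-R_0\orh\ot$, so the linearized pressure is $R_0(\orh\vartheta+\ot\rho)$ and the linear coupled operator is exactly the fluid-structure operator that, by the maximal-regularity/$\mathcal{R}$-sectoriality analysis announced in the abstract, generates an analytic semigroup with exponential decay on the complement of the steady state.

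The core linear input I would invoke is the following: there exists $\beta>0$ such that for data in the right trace spaces and source terms in the $\mathbb{E}_{-\beta}$-weighted $L^p L^q$ spaces (with the corresponding compatibility conditions at $t=0$), the linear problem $\mathcal{L}U=F$, $U|_{t=0}=U^0$, has a unique solution $U$ in the weighted maximal-regularity space
\begin{equation*}
\mathcal{X}_\beta := W^{1,p}_\beta(0,\infty;W^{1,q}(\mf)) \times W^{1,2}_{p,q,\beta}((0,\infty);\mf)^3 \times W^{1,2}_{p,q,\beta}((0,\infty);\mf) \times W^{2,4}_{p,q,\beta}((0,\infty);\ms),
\end{equation*}
with a bound $\|U\|_{\mathcal{X}_\beta}\le C(\|F\|+\|U^0\|)$ uniform in time. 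This is proved by establishing $\mathcal{R}$-sectoriality of the coupled fluid-structure operator on the relevant $L^q$-type product space; the transport part for $\rho$ is handled separately, exploiting the damping in the plate equation and the fact that $v$ has the regularity $W^{1,2}_{p,q,\beta}$, so that $\nabla v\in L^p_\beta(L^q)$ drives the continuity equation and one recovers $\nabla\rho$ and $\partial_t\rho$ with the stated decay, while $\rho$ itself stays only bounded (hence the $C^0_b$ statement and the $L^p_\beta + L^\infty$ splitting for $\eta$).

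Given this linear theory, the argument is a standard contraction: define $\Phi(U)$ as the solution of $\mathcal{L}\Phi(U)=\mathcal{N}(U)$ with the prescribed initial data, and show that on a ball $B_R\subset\mathcal{X}_\beta$ of small radius $R$ the map $\Phi$ is a self-map and a contraction. This requires the nonlinear estimates $\|\mathcal{N}(U)\|\lesssim \|U\|^2 + (\text{data})\cdot\|U\|$ and a matching Lipschitz bound $\|\mathcal{N}(U_1)-\mathcal{N}(U_2)\|\lesssim (\|U_1\|+\|U_2\|+\text{data})\|U_1-U_2\|$ on $B_R$; here one uses the Banach-algebra-type properties of $W^{1,q}$ for $q>3$, the embeddings \eqref{embama}--\eqref{embama2}, and the smoothness and near-identity character of $X_\eta$ for small $\eta$ (which in particular keeps $\det\nabla X_\eta$ bounded away from $0$ and preserves the geometric condition \eqref{geo02} and positivity of the density on all of $[0,\infty)$). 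Closing the fixed point with $R$ small forces the data to be small, which is exactly \eqref{mainsmall}; uniqueness follows from the contraction, and transferring back through $X_\eta^{-1}$ gives the stated regularity for $\vect{\widetilde\rho,\widetilde v,\widetilde\vartheta,\eta}$ together with $\Gamma_0\cap\Gamma_S(\eta(t))=\emptyset$ and $\widetilde\rho>0$ for all $t\ge0$.

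The main obstacle I expect is the linear step: proving that the coupled \emph{fluid-structure} operator is $\mathcal{R}$-sectorial on the product $L^q$-space with a genuinely positive exponential decay rate $\beta$, handling simultaneously the hyperbolic continuity equation (no smoothing in $\rho$) and the parabolic-plus-damped-plate block, and doing so uniformly so that the weighted maximal-regularity constant does not blow up. A secondary but delicate point is the bookkeeping of the compatibility conditions \eqref{cc3}--\eqref{cc4} through the change of variables and through the splitting of $\eta$ into a decaying part and an $L^\infty$ part, so that $\mathcal{N}(U)$ genuinely lands in the space for which the linear solvability holds; the case distinction in \eqref{pqpqpq} on $\frac1p+\frac1{2q}$ versus $\frac12$ is precisely what governs whether the Neumann and clamped-plate traces of the data must be prescribed, and must be respected at each step.
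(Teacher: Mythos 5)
Your overall architecture coincides with the paper's: the same combined Lagrangian/geometric change of variables, linearization at $\vect{\orh,0,\ot,0}$ under the normalization \eqref{pressure-new}, $\mathcal{R}$-sectoriality of the coupled fluid--structure operator, exponentially weighted maximal regularity on $(0,\infty)$, and a contraction for small data. There is, however, a genuine gap in the linear step, located exactly where you place "the main obstacle". The space $\mathcal{X}_\beta$ you propose requires $\rho\in W^{1,p}_\beta(0,\infty;W^{1,q}(\mf))$, $\vartheta\in W^{1,2}_{p,q,\beta}((0,\infty);\mf)$ and $\eta\in W^{2,4}_{p,q,\beta}((0,\infty);\ms)$, i.e.\ exponential decay of every component. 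This is false for the linearized system: integrating the continuity equation together with the boundary condition $v=\partial_t\eta\,e_3$ on $\Gamma_S$ shows that $\int_\mf\rho\,\rd y+\orh\int_\ms\eta_1\,\rd s$ is conserved, and the Neumann condition for $\vartheta$ conserves $\int_\mf\vartheta\,\rd y$; hence $0$ belongs to the spectrum of the generator on the full space $\mx$ and no decay rate $\beta>0$ exists there. Your parenthetical remark that "$\rho$ itself stays only bounded" is correct but contradicts your own definition of $\mathcal{X}_\beta$, and the mechanism you invoke (the continuity equation driven by $\nabla v\in L^p_\beta(L^q)$) only controls $\nabla\rho$ and $\partial_t\rho$, not the spatial average of $\rho$.

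The paper's fix is structural rather than bookkeeping: exponential stability is proved only on the subspace $\mx_{\textnormal{m}}$ of \eqref{eq:Xm} where the two conserved quantities vanish (\cref{th:exp-stab}); the averages $\rhob$ and $\vartheta_\flat$ are obtained by explicit time integration of the source averages and are merely bounded; and --- the point your proposal misses --- these non-decaying averages feed back into the plate equation through the pressure term $R_0\ot\rhob+R_0\orh\vartheta_\flat$, which forces the auxiliary system \eqref{eq:linear-FS-3a}--\eqref{eq:linear-FS-3c} (solved by differentiating in time and invoking \cref{lemdeb}), and is the reason why $\eta$ only lies in $L^{p}_{\beta}(0,\infty;W^{4,q}(\ms))+L^{\infty}(0,\infty;W^{4,q}(\ms))$ and why the nonlinearity $H$ must be split as $\widetilde H+\widehat H$ with $\widehat H=R_0\widehat\rho\,\widehat\vartheta\in L^\infty(0,\infty)$ and $\partial_t\widehat H\in L^p_\beta(0,\infty)$. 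Without identifying the conservation laws, restricting the semigroup to $\mx_{\textnormal{m}}$, and propagating the resulting splitting of $\rho$, $\vartheta$, $\eta$ and of the source space through the fixed point, the contraction cannot be closed in any single exponentially weighted space of the form you write down.
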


\begin{Remark}
Let us make the following remarks on the above results:
\begin{enumerate}
\item Note that, in \cref{th:local-main} we do not need initial displacement of the plate $\eta_1^0$ to be zero. 
This is a difference with respect to previous works, for instance \cite{SM18} or our previous work \cite{MT20} (with an incompressible fluid). Here we manage to handle this case by modifying our change of variables (see \cref{sec:cov-local}). 
\item In \cref{th:local-main} and \cref{th:global-main}, we do not have any ``loss of regularity'' at initial time. More precisely, we obtain the continuity of the solution with respect to time in the same space where the initial data belong. Due to the coupling between the fluid system and the structure equations, some results in the literature 
are stated with this loss of regularity: for instance in \cite[Theorem 1.7]{SM18}, there is a loss of order $1/2$ in the space regularity for the fluid velocity at initial time. 
\item As explained above since we work in the ``$L^{p}-{L^{q}}$'' framework, we need less regularity on the initial conditions that in the Hilbert case done by \cite{SM18}. 
More precisely, in  \cite{SM18} the author assumes that the initial conditions satisfy 
\begin{gather*}
\eta_{1}^{0} =0, \quad \eta_{2}^{0} \in W^{3,2}(\ms), \quad  
\widetilde{\rho}^0 \in W^{2,2}({\mf({\eta_{1}^{0}})}), \quad
\widetilde v^{0} \in W^{3,2}(\mathcal{F}({\eta_{1}^{0}}))^{3},
\end{gather*} 
with the corresponding compatibility conditions.
\item \cref{th:local-main} and \cref{th:global-main} can be adapted to the $2D/1D$ case, that is where $\mf$ is a regular bounded domain in $\mathbb{R}^{2}$ such that $\partial{\mf}$ contains a flat part $\Gamma_{S} = \ms \times \{0 \},$ where $\ms$ is an open bounded interval of $\mathbb{R}.$ In that case we can  take $p, q \in (2, \infty)$ such that  $\ds \frac{1}{p} + \frac{1}{2q} \neq \frac{1}{2}.$
\item Instead of taking heat conducting fluid, we can also consider barotropic fluid model, i.e., the system \eqref{eq:main01} without the temperature equation and with the pressure law  $ \widetilde{\pi} = \widetilde{\rho}^{\gamma},$ for some constant $\gamma > 1.$ In that case, we can take $1 < p < \infty$ and $n < q < \infty$ ($n=2$ or $3,$ the dimension of the fluid domain)
such that $\ds \frac{1}{p} + \frac{1}{2q} \neq 1.$ 
\end{enumerate}
\end{Remark}

The proofs of \cref{th:local-main} and \cref{th:global-main} follow a standard approach in the literature on well-posedness for fluid-solid interaction systems. 
One of the main difficulties in studying fluid-structure models is that the fluid system is written in the deformed configuration (in Eulerian variables) whereas the structure equations are written in the reference configuration (in Lagrangian variables). 
Since the fluid domain $\mf(\eta(t))$ depends on the structure displacement, which is one of unknowns, we first reformulate the problem in a fixed domain. 
This is achieved thanks to a combination of a \textit{geometric} change of variables (defined through the initial displacement of the structure) and a 
\textit{Lagrangian} change of coordinates. With this combined change of variables, we reformulate the problem in the reference domain $\mf.$ 
In most of the existing literature, a geometric change of variables via the displacement of the fluid-structure interface is used to rewrite the problem in a fixed domain (\cite{JL, GHL, BT19, MT20}). However, in the context of compressible fluid-structure systems, it is more convenient to use a Lagrangian (see for instance \cite{HMTT}) or a combination of geometric and Lagrangian change of coordinates (\cite{HiebMur}). In fact, such transformations allow us to use basic contraction mapping theorem. More precisely, this transformation eliminates the difficult term $\widetilde v \cdot \nabla \widetilde\rho$ from the density equation. 

Next, we associate the original nonlinear problem to a linear one involving the non-homogeneous terms. In the case of the local in time existence, 
this linear system can be partially decoupled (see system \eqref{NL1.0}-\eqref{NL1.3}). The $L^{p}-L^{q}$ regularity of such linear system over  finite time interval  is obtained by combining various existing maximal $L^{p}-L^{q}$ results for parabolic systems. One of the difficulties is that 
due to the non-zero initial displacement of the beam, we are dealing with linear operators involving variable coefficients.  
For the global existence part, we use a ``monolithic'' type approach, which means that the linearized system in consideration is still a coupled system of fluid and structure equations (see system \eqref{eq:linear-FSa}-\eqref{eq:linear-FSc}). A crucial step is to show the maximal $L^{p}-L^{q}$ property of the associated 
\textit{fluid-structure} linear operator in the infinite time horizon.  This is achieved by showing that this operator is $\mr$-sectorial and generates an exponentially stable semigroup in a suitable function space. Finally, for both the existence for small time and the existence for small initial conditions, 
we end the proof by using the Banach fixed point theorem. 

The plan of the paper is as follows. 
In \cref{sec_back}, we recall some results concerning $\mr$-sectorial operators that are used both for the proofs of \cref{th:local-main} and \cref{th:global-main}.
Then, we prove \cref{th:local-main} in \cref{sec:local}. In \cref{sec:cov-local}, we introduce the combination of Lagrangian and geometric change of coordinates to reformulate the original problem in the reference configuration. Local in time existence for the system written in reference configuration is stated in \cref{th:local-2}. 
In \cref{sec:local-max-lp}, we prove the maximal $L^{p}-L^{q}$ regularity of a linearized system, whereas in \cref{sec:23}, we derive estimates for the nonlinear terms 
in order to prove \cref{th:local-2} by using the Banach fixed point theorem. 
\cref{sec_glo} is devoted to the proof of \cref{th:global-main}.  In \cref{sec_chgvar2} we apply the same change of variables 
than in \cref{sec:cov-local} with some slight modifications and then linearize the system around a constant steady state.
The global in time existence for small initial data for the system written in the reference configuration is stated in \cref{th:global-new}.
In \cref{sec:op-fsi}, we introduce the so-called \textit{fluid-structure} operator and we show that it is an $\mr$-sectorial operator and in 
\cref{sec:exp-fsi} that is generates an exponentially stable semigroup in a suitable function space. 
The maximal $L^{p}-L^{q}$ regularity of the linearized system is proved in \cref{sec:max-lin-g}. Finally, in \cref{sec:36} we show \cref{th:global-new}.
by using the Banach fixed point theorem.

\section{Some Background on $\mathcal{R}$-sectorial Operators}\label{sec_back}
We recall here some definitions and properties related to $\mathcal{R}$-sectorial operators. 
First, let us give the definition of 
$\mathcal{R}$-boundedness ($\mathcal{R}$ for Randomized) for a family of operators (see, for instance, \cite{Weis01,DenkHieberPruss,KunstmannWeis:Levico}): 
\begin{Definition} 
Assume $\mathcal{X}$ and $\mathcal{Y}$ are Banach spaces and $\mathcal{E} \subset \mathcal{L}(\mathcal{X},\mathcal{Y})$.
We say that $\mathcal{E}$ is $\mr-$bounded if there exist
  $p\in [1,\infty)$ and a constant $C>0$, such that for any integer $N \ge 1$,
  any $T_1, \ldots T_N \in  \mathcal{E}$,
  any independent Rademacher random variables $r_1, \ldots, r_N$,
  and any $x_1, \ldots, x_N \in \mathcal{X}$,
\[
   \left(\mathbb{E} \left\| \sum_{j=1}^{N} r_{j} T_{j} x_{j}\right\|_\mathcal{Y}^p\right)^{1/p} \leq C
   \left(\mathbb{E} \left\| \sum_{j=1}^{N} r_{j} x_{j}\right\|_\mathcal{X}^p\right)^{1/p}.
\]
The $\mr_p$-bound of $\mathcal{E}$ on $\mathcal{L}(\mathcal{X},\mathcal{Y})$, denoted by $\mr_p(\mathcal{E})$,
is the smallest constant $C$ in the above inequality.
\end{Definition}

Let us recall that a Rademacher random variable is a symmetric random variables with value in $\{-1,1\}$
and that $\mathbb{E}$ denotes the expectation of a random variable.
Note that the above definition is independent of $p\in [1,\infty)$ (see, for instance, \cite[p.26]{DenkHieberPruss}).
The $\mr_p$-bound has the following properties (see, for instance, Proposition 3.4 in \cite{DenkHieberPruss}):
\begin{equation}  \label{eq:Rbdd-1}
    \mr_p(\mathcal{E}_1 + \mathcal{E}_2) \leqslant \mr_p(\mathcal{E}_1 ) +  \mr_p( \mathcal{E}_2),
  \quad
  \mr_p(\mathcal{E}_1 \mathcal{E}_2 ) \leqslant \mr_p(\mathcal{E}_1 )   \mr_p( \mathcal{E}_2).
\end{equation}
For any $\beta \in (0,\pi)$, we consider the sector $\mr$-sectorial operators:
\begin{equation}\label{sigma}
 \Sigma_{\beta} = \{ \lambda \in \mathbb{C} \setminus \{0\} \ ; \    |\arg(\lambda)| < \beta \}.
\end{equation}
We can introduce the definition of :
\begin{Definition}[sectorial and $\mr$-sectorial operators]\label{defmr}
Let $A : \mathcal{D}(A) \to \mathcal{X}$ be a densely defined closed linear operator on the Banach space $\mathcal{X}$.
The operator $A$ is ($\mathcal{R}$)-sectorial of angle $\beta \in (0, \pi)$
if 
$$
\Sigma_{\beta} \subset \rho(A)
$$
and if the set
\[
  R_\beta =  \left\{ \lambda(\lambda - A)^{-1} \ ; \  \lambda   \in \Sigma_{\beta} \right\}
\]
is ($\mathcal{R}$)-bounded  in $\mathcal{L}(\mathcal{X})$. 
\end{Definition}
We denote by $M_\beta(A)$ (respectively $\mr_{\beta}(A)$) the bound (respectively the $\mr$-bound) of $R_\beta$.
One can replace in the above definitions $R_\beta$ by the set
$$
\widetilde{R_\beta}= \left\{ A(\lambda - A)^{-1} \ ; \  \lambda   \in \Sigma_{\beta} \right\}.
$$
In that case, we denote the uniform bound and the $\mr$-bound by
$\widetilde{M}_\beta(A)$ and $\widetilde{\mr_{\beta}}(A)$.

The following result, due to \cite{Weis01} (see also \cite[p.45]{DenkHieberPruss}), shows the important relation between the notion of $\mr$-sectoriality 
and the maximal regularity of type $L^p$:
\begin{Theorem}\label{thm:weis-Lp-maxreg-char}
Assume $\mathcal{X}$ is a UMD Banach space and that $A : \mathcal{D}(A) \to \mathcal{X}$ is a densely defined, closed linear operator on $\mathcal{X}$.
Then the following assertions are equivalent:
\begin{enumerate}
\item For any $T\in (0,\infty]$ and for any $f\in L^p(0,T;\mathcal{X})$, the Cauchy problem
\begin{equation} \label{eq:max0}
u' = A u + f \quad \text{in} \quad (0,T), \quad u(0) = 0
\end{equation}
admits a unique solution $u$ with $u', Au\in L^{p}(0,T;\mathcal{X})$ and there exists a constant $C>0$ such that
$$
\|u'\|_{L^{p}(0,T;\mathcal{X})}+\|Au\|_{L^{p}(0,T;\mathcal{X})}\leq C\|f\|_{L^{p}(0,T;\mathcal{X})}.
$$
\item $A$ is $\mr$-sectorial of angle $> \frac{\pi}{2}$.
\end{enumerate}
\end{Theorem}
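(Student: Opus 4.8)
The plan is to recognize this as the Weis characterization of maximal $L^p$-regularity and to reduce both implications to statements about operator-valued Fourier multipliers on the real line. The bridge I would use is the standard equivalence: assertion~(1) (the maximal regularity estimate on $(0,T)$ for every $T\in(0,\infty]$ with a uniform constant) holds if and only if the corresponding estimate holds on the whole line, i.e. for every $f\in L^p(\mathbb{R};\mx)$ the solution $u$ of $u'=Au+f$ on $\mathbb{R}$ satisfies $\|u'\|_{L^p(\mathbb{R};\mx)}+\|Au\|_{L^p(\mathbb{R};\mx)}\le C\|f\|_{L^p(\mathbb{R};\mx)}$. One direction is restriction: extend $f\in L^p(0,\infty;\mx)$ by zero, solve on $\mathbb{R}$, and use causality to recover a solution on $(0,\infty)$ vanishing at $t=0$; the other is a reflection/extension argument, and the finite-$T$ version follows since the constant does not deteriorate as $T\downarrow 0$. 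On the line everything is diagonalized by the Fourier transform: formally $\widehat u(\xi)=(i\xi-A)^{-1}\widehat f(\xi)$, so that $u'$ corresponds to the symbol $m_1(\xi)=i\xi(i\xi-A)^{-1}$ and $Au$ to $m_0(\xi)=A(i\xi-A)^{-1}=m_1(\xi)-I$.

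For $(2)\Rightarrow(1)$ I would invoke Weis's operator-valued Mikhlin multiplier theorem on the UMD space $\mx$: a symbol $m\in C^1(\mathbb{R}\setminus\{0\};\mathcal{L}(\mx))$ defines a bounded Fourier multiplier on $L^p(\mathbb{R};\mx)$, $1<p<\infty$, provided $\{m(\xi):\xi\neq0\}$ and $\{\xi m'(\xi):\xi\neq0\}$ are $\mr$-bounded. Apply this to $m_1$. Since $\beta>\pi/2$, the punctured imaginary axis $i(\mathbb{R}\setminus\{0\})$ lies in $\Sigma_\beta$, so by $\mr$-sectoriality (\cref{defmr}) the family $\{i\xi(i\xi-A)^{-1}:\xi\neq0\}=\{\lambda(\lambda-A)^{-1}:\lambda\in i(\mathbb{R}\setminus\{0\})\}$ is $\mr$-bounded, and hence so is $\{m_0(\xi)\}=\{m_1(\xi)-I\}$. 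A direct computation gives $\xi m_1'(\xi)=i\xi(i\xi-A)^{-1}+\xi^2(i\xi-A)^{-2}=m_1(\xi)-m_1(\xi)^2$, which is $\mr$-bounded by the sum and product rules \eqref{eq:Rbdd-1}. Thus $m_1$ and $m_0$ are bounded Fourier multipliers, which yields the estimate on $\mathbb{R}$; causality (the map $\lambda\mapsto(\lambda-A)^{-1}$ is holomorphic with $\|(\lambda-A)^{-1}\|\lesssim|\lambda|^{-1}$ on $\{\Re\lambda>0\}\subset\Sigma_\beta$, so by a Paley–Wiener argument the convolution kernel is supported in $[0,\infty)$) produces a solution on $(0,\infty)$ with $u(0)=0$, uniqueness follows from the resolvent bounds, and restriction to $(0,T)$ gives~(1).

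For $(1)\Rightarrow(2)$ I would proceed in three steps. First, maximal $L^p$-regularity forces $A$ to generate a bounded analytic semigroup on $\mx$ (a classical consequence: one first obtains a $C_0$-semigroup, then analyticity and the resolvent bound $M_{\pi/2+\varepsilon}(A)<\infty$ on some sector $\Sigma_{\pi/2+\varepsilon}$ from the maximal regularity estimate). Second, the estimate on $\mathbb{R}$ says precisely that $m_0(\xi)=A(i\xi-A)^{-1}$ is a bounded $L^p(\mathbb{R};\mx)$-Fourier multiplier; I would then use the converse fact that for $1<p<\infty$ the range of a bounded operator-valued $L^p$-Fourier multiplier, evaluated at its Lebesgue points, is $\mr$-bounded (a randomization/transference argument, cf.\ \cite[Sec.~3]{DenkHieberPruss}), obtaining $\mr$-boundedness of $\{A(i\xi-A)^{-1}:\xi\in\mathbb{R}\setminus\{0\}\}$, i.e.\ $\mr$-sectoriality on the punctured imaginary axis. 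Third, I would extend this $\mr$-bound to a sector of angle $>\pi/2$ by the $\mr$-bounded analogue of the classical Kato perturbation argument: for $\lambda$ near $i\eta$ with $|\lambda-i\eta|$ small relative to $|\eta|$ and the $\mr$-bound on the axis, one expands $(\lambda-A)^{-1}$ in a Neumann series around $(i\eta-A)^{-1}$, and \eqref{eq:Rbdd-1} shows the resulting family is $\mr$-bounded on the enlarged set; combined with the bounded analytic semigroup this covers some $\Sigma_\beta$ with $\beta>\pi/2$.

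The main obstacle, where essentially all the depth resides, is the operator-valued Mikhlin–Weis multiplier theorem used in $(2)\Rightarrow(1)$: it is itself a substantial result and is exactly where the UMD hypothesis enters (through vector-valued Littlewood–Paley theory, Bourgain's work on UMD spaces, and the Kahane contraction principle), so I would cite \cite{Weis01} rather than reprove it. The secondary subtlety is the ``range of a bounded multiplier is $\mr$-bounded'' lemma in $(1)\Rightarrow(2)$, together with the bookkeeping forced by the fact that $A$ need not be invertible — the relevant sectors never contain the origin, so the Neumann-series perturbation must be phrased using only resolvents on the punctured axis and must stay uniformly away from $0$.
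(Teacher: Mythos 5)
This statement is not proved in the paper at all: it is Weis's characterization of maximal $L^p$-regularity, quoted verbatim from \cite{Weis01} (see also \cite[p.\ 45]{DenkHieberPruss}), so there is no ``paper's proof'' to compare against. Your sketch reconstructs the standard literature proof, and most of it is sound: the reduction to a whole-line Fourier-multiplier statement, the identity $\xi m_1'(\xi)=m_1(\xi)-m_1(\xi)^2$ together with \eqref{eq:Rbdd-1} for the sufficiency direction, and the Cl\'ement--Pr\"uss converse (Lebesgue points of a bounded operator-valued multiplier form an $\mr$-bounded family) for the necessity direction are all exactly the right ingredients.

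There is, however, one concrete gap in the third step of $(1)\Rightarrow(2)$. The Neumann-series perturbation around points $i\eta$ of the punctured imaginary axis only yields $\mr$-boundedness of $\{\lambda(\lambda-A)^{-1}\}$ on a conical neighbourhood of that axis (the admissible radius $|\lambda-i\eta|\lesssim\delta|\eta|$ shrinks linearly in $|\eta|$, so points near the positive real axis are never reached), and ``combined with the bounded analytic semigroup'' does not close the argument: analyticity gives only \emph{uniform} bounds on the interior of the right half-plane, and uniform boundedness does not imply $\mr$-boundedness outside Hilbert spaces --- if it did, sectorial and $\mr$-sectorial would coincide and the theorem would be vacuous. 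The standard repair is the lemma that the strong closure of the absolutely convex hull of an $\mr$-bounded set is $\mr$-bounded (up to a factor $2$), applied to a Poisson- or Cauchy-integral representation of $\lambda(\lambda-A)^{-1}$ on the open right half-plane in terms of its boundary values on $i\mathbb{R}$; this, together with your Neumann-series step near the axis, covers a full sector $\Sigma_\beta$ with $\beta>\pi/2$. With that substitution your outline matches the proof in \cite{Weis01} and \cite{DenkHieberPruss}, which in the context of this paper one would simply cite rather than reprove.
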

In the above definition, we recall that $\mathcal{X}$ is a UMD Banach space if the Hilbert transform is bounded in $L^p(\mathbb{R};\mathcal{X})$ for $p\in (1,\infty)$. 
In particular, the closed subspaces of $L^q(\Omega)$ for $q\in (1,\infty)$ are UMD Banach spaces.  We refer the reader to \cite[pp.141--147]{Amann} for more information on UMD spaces.

Combining the above theorem with \cite[Theorem 2.4]{Dor91} and  \cite[Theorem 1.8.2]{Tri95}, we can consider the following Cauchy problem
\begin{equation} \label{eq:max-reg-g}
u' = A u + f \quad \text{in}\quad  (0,\infty), \quad u(0) = u_{0}.
\end{equation}
\begin{Corollary} \label{thm:max-reg-g}
Assume $\mathcal{X}$ is a UMD Banach space, $1 < p < \infty$ and $A$ is a closed, densely defined operator in $\mathcal{X}$ with domain $\mathcal{D}(A).$
Let us suppose also that $A$ is a $\mr$-sectorial operator of angle $ > \frac{\pi}{2}$ and that the semigroup generated by $A$ has negative exponential type.
Then for any
 $u_{0} \in (\mathcal{X}, \mathcal{D}(A))_{1-1/p,p}$ and for any $f \in L^{p}(0,\infty;\mathcal{X}),$
the system  \eqref{eq:max-reg-g} admits a unique solution in $L^{p}(0,\infty;\mathcal{D}(A)) \cap W^{1,p}(0,\infty;\mathcal{X}).$
 \end{Corollary}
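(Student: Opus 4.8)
The plan is to decompose \eqref{eq:max-reg-g} into an inhomogeneous problem with zero initial datum and a homogeneous problem carrying $u_0$, to solve the two separately and to add the solutions. Concretely, I would write $u = u_1 + u_2$, where $u_1$ solves $u_1' = A u_1 + f$ on $(0,\infty)$ with $u_1(0) = 0$, and where $u_2(t) := e^{tA} u_0$ is the orbit of $u_0$ under the semigroup generated by $A$; this semigroup is analytic and strongly continuous because $A$ is densely defined and $\mr$-sectorial of angle $> \frac{\pi}{2}$.

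For $u_1$: since $\mathcal{X}$ is UMD and $A$ is $\mr$-sectorial of angle $> \frac{\pi}{2}$, \cref{thm:weis-Lp-maxreg-char} applied with $T = \infty$ yields a unique $u_1$ with $u_1', A u_1 \in L^p(0,\infty;\mathcal{X})$ together with the maximal regularity estimate. Because the semigroup generated by $A$ has negative exponential type, its growth bound is strictly negative, so $0 \in \rho(A)$ and $A^{-1} \in \mathcal{L}(\mathcal{X})$; then $A u_1 = u_1' - f \in L^p(0,\infty;\mathcal{X})$ gives $u_1 = A^{-1}(u_1' - f) \in L^p(0,\infty;\mathcal{X})$, hence $u_1 \in L^p(0,\infty;\mathcal{D}(A)) \cap W^{1,p}(0,\infty;\mathcal{X})$.

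For $u_2$: write $\|e^{tA}\|_{\mathcal{L}(\mathcal{X})} \leqslant M e^{-\omega t}$ with $\omega > 0$. The decisive fact is the trace-space characterization: for the generator of an exponentially stable analytic semigroup (in particular $0 \in \rho(A)$), combining \cite[Theorem 2.4]{Dor91} and \cite[Theorem 1.8.2]{Tri95} one has, with equivalent norms,
\[
(\mathcal{X},\mathcal{D}(A))_{1-1/p,p} = \left\{ x \in \mathcal{X} \ ; \ \left( \int_{0}^{\infty} \|A e^{tA} x\|_{\mathcal{X}}^{p} \, \rd t \right)^{1/p} < \infty \right\},
\]
the finiteness near $t = \infty$ being automatic thanks to the exponential decay. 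Hence $u_0 \in (\mathcal{X},\mathcal{D}(A))_{1-1/p,p}$ implies $A u_2 = A e^{(\cdot)A} u_0 \in L^p(0,\infty;\mathcal{X})$, so $u_2' = A u_2 \in L^p(0,\infty;\mathcal{X})$ as well, while $\|u_2\|_{L^p(0,\infty;\mathcal{X})} \leqslant M \|u_0\|_{\mathcal{X}} \left( \int_{0}^{\infty} e^{-p\omega t}\, \rd t \right)^{1/p} < \infty$. Thus $u_2 \in L^p(0,\infty;\mathcal{D}(A)) \cap W^{1,p}(0,\infty;\mathcal{X})$, and $u_2(0) = u_0$ since $\lim_{t \to 0^+} e^{tA} u_0 = u_0$ by strong continuity.

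Adding up, $u = u_1 + u_2 \in L^p(0,\infty;\mathcal{D}(A)) \cap W^{1,p}(0,\infty;\mathcal{X})$ solves $u' = A u + f$ with $u(0) = u_0$. For uniqueness, the difference of two solutions in this space solves \eqref{eq:max0} with $f \equiv 0$, hence vanishes by the uniqueness part of \cref{thm:weis-Lp-maxreg-char}. I expect the only delicate points to be the interpolation-theoretic identification of the admissible initial data $(\mathcal{X},\mathcal{D}(A))_{1-1/p,p}$ and the verification that its ``behaviour at $t = \infty$'', as well as the corresponding tail in the estimate of $u_2$, is controlled by the negative exponential type of the semigroup; the rest is a routine assembly of \cref{thm:weis-Lp-maxreg-char} with standard semigroup and trace facts.
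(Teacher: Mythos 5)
Your proposal is correct and follows exactly the route the paper indicates (the paper gives no written proof, only the instruction to combine \cref{thm:weis-Lp-maxreg-char} with the trace-space characterization of $(\mathcal{X},\mathcal{D}(A))_{1-1/p,p}$ from \cite[Theorem 2.4]{Dor91} and \cite[Theorem 1.8.2]{Tri95}): you split off the inhomogeneous problem with zero data, handle the orbit $e^{tA}u_0$ via the interpolation-space characterization, and use the negative exponential type both to control the tails and to invert $A$. Nothing further is needed.
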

Finally, we will need the following result (\cite[Corollary~2]{KunstmannWeis:Pisa2001}) on the perturbation theory of $\mr$-sectoriality.
\begin{Proposition} \label{pr:perturb}
Suppose $A$ is a $\mr$-sectorial operator of angle $\beta$ on a Banach space $\mathcal{X}$.
Assume that $B : \mathcal{D}(B) \to \mathcal{X}$ is a linear  operator such that $\mathcal{D}(A) \subset \mathcal{D}(B)$ and
such that there exist $a,b\geq 0$ satisfying
\begin{equation} \label{c:small}
    \| B x  \|_{\mathcal{X}} \leq a  \|  Ax \|_{\mathcal{X}} + b \| x \|_{\mathcal{X}} \quad (x\in \mathcal{D}(A)).
\end{equation}
If 
$$
a < \frac{1}{\widetilde M_\beta(A) \widetilde{\mr_{\beta}}(A)} \quad \text{and} \quad
\lambda >   \frac{b M_\beta(A)  \widetilde{\mr_{\beta}}(A)} {1-a \widetilde{M}_\beta(A) \widetilde{\mr_{\beta}}(A)},
$$ 
then $A+B -\lambda$ is $\mr$-sectorial of angle $\beta$.
\end{Proposition}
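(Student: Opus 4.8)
The plan is to run a Neumann-series perturbation argument directly at the level of the resolvents, carrying $\mr$-bounds (not merely norm bounds) through every step. Fix $\mu \in \Sigma_\beta$ and set $z := \mu + \lambda$. Since $\Sigma_\beta$ is a cone for which adding a positive real number can only decrease the modulus of the argument, one has $z \in \Sigma_\beta \subset \rho(A)$, so $(z-A)^{-1}$ is well defined, and I would start from the algebraic identity
$$
\mu - (A+B-\lambda) = (I - S_z)(z-A) \quad \text{on } \mathcal{D}(A), \qquad S_z := B(z-A)^{-1} \in \mathcal{L}(\mx),
$$
which is legitimate because the range of $(z-A)^{-1}$ is exactly $\mathcal{D}(A) \subset \mathcal{D}(B)$. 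Everything then reduces to showing that $I - S_z$ is boundedly invertible on $\mx$, with a norm bound and an $\mr$-bound that are uniform over $\mu \in \Sigma_\beta$.

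The core of the proof is the estimate of the $\mr$-bound of the family $\{ S_z \mid \mu \in \Sigma_\beta \}$. Given $\mu_1, \dots, \mu_N \in \Sigma_\beta$ (with $z_j = \mu_j + \lambda$), independent Rademacher variables $r_1, \dots, r_N$ and $y_1, \dots, y_N \in \mx$, I set $w_j := (z_j-A)^{-1} y_j \in \mathcal{D}(A)$; by linearity of $B$ one has $\sum_j r_j S_{z_j} y_j = B\big( \sum_j r_j w_j \big)$ with $\sum_j r_j w_j \in \mathcal{D}(A)$, so applying the relative bound \eqref{c:small} to this single vector gives, for each realisation of the $r_j$,
$$
\Big\| \sum_j r_j S_{z_j} y_j \Big\| \le a \Big\| \sum_j r_j A(z_j-A)^{-1} y_j \Big\| + b \Big\| \sum_j r_j (z_j-A)^{-1} y_j \Big\|.
$$
Taking the $L^p$-norm in the probability space and applying Minkowski's inequality, I would control the first term via the $\mr$-boundedness of $\widetilde{R_\beta} = \{ A(z-A)^{-1} \mid z \in \Sigma_\beta \}$ (with $\mr$-bound $\widetilde{\mr_\beta}(A)$), and the second by writing $(z-A)^{-1} = z^{-1}\, z(z-A)^{-1}$ and combining the uniform lower bound $|z| \ge c(\beta)\lambda$ valid on $\Sigma_\beta + \lambda$ with the contraction principle for the uniformly bounded scalars $z^{-1}$ (applied to $R_\beta$). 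A careful bookkeeping of the constants through this chain, together with the smallness assumptions on $a$ and on $\lambda$, yields $q := \mr(\{ S_z \mid \mu \in \Sigma_\beta \}) < 1$; this step, in which one must be disciplined about replacing every norm bound by the corresponding $\mr$-bound and about extracting exactly the thresholds $a < (\widetilde{M}_\beta(A)\,\widetilde{\mr_\beta}(A))^{-1}$ and the stated lower bound on $\lambda$ (as in \cite[Corollary~2]{KunstmannWeis:Pisa2001}), is the one I expect to be the main obstacle.

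Once $q < 1$, each $I - S_z$ is invertible on $\mx$ through the Neumann series $(I-S_z)^{-1} = \sum_{k \ge 0} S_z^{\,k}$, convergent in operator norm uniformly in $\mu$; moreover, iterating the product rule in \eqref{eq:Rbdd-1} shows that $\{ S_z^{\,k} \}$ has $\mr$-bound at most $q^k$, so the partial sums have $\mr$-bounds at most $\sum_{k \ge 0} q^k = (1-q)^{-1}$, whence $\{ (I-S_z)^{-1} \mid \mu \in \Sigma_\beta \}$ is $\mr$-bounded by $(1-q)^{-1}$ (an $\mr$-bounded family being stable under operator-norm limits of its partial sums). It follows that $\Sigma_\beta \subset \rho(A+B-\lambda)$ with $(\mu - (A+B-\lambda))^{-1} = (z-A)^{-1}(I-S_z)^{-1}$; in particular $A+B-\lambda$ is densely defined (its domain is $\mathcal{D}(A)$) and closed (it has a bounded, everywhere-defined resolvent). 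Finally, to conclude $\mr$-sectoriality I would write
$$
\mu\big( \mu - (A+B-\lambda) \big)^{-1} = \big( z(z-A)^{-1} - \lambda(z-A)^{-1} \big)(I-S_z)^{-1},
$$
note that $\{ z(z-A)^{-1} \mid \mu \in \Sigma_\beta \} \subset R_\beta$ and that $\{ \lambda(z-A)^{-1} \} = \{ (\lambda/z)\, z(z-A)^{-1} \}$ is $\mr$-bounded by the contraction principle (the scalars $\lambda/z$ being uniformly bounded on $\Sigma_\beta + \lambda$), and then invoke \eqref{eq:Rbdd-1} once more to obtain that $\{ \mu(\mu - (A+B-\lambda))^{-1} \mid \mu \in \Sigma_\beta \}$ is $\mr$-bounded, which completes the argument.
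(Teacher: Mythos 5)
The paper does not actually prove this proposition: it is quoted verbatim from \cite[Corollary~2]{KunstmannWeis:Pisa2001}, so there is no in-paper argument to compare against. Your Neumann-series strategy is exactly the standard route used in that reference, and the structural steps are all sound: the factorization $\mu-(A+B-\lambda)=(I-S_z)(z-A)$ with $z=\mu+\lambda\in\Sigma_\beta$, the transfer of the relative bound \eqref{c:small} to an $\mr$-bound by applying it pointwise to $\sum_j r_j w_j$ and then taking $L^p(\Omega)$-norms, the $\mr$-bound $q^k$ for $S_z^k$ via \eqref{eq:Rbdd-1}, the stability of $\mr$-bounds under strong limits of the partial sums, and the final decomposition $\mu(z-A)^{-1}=z(z-A)^{-1}-(\lambda/z)\,z(z-A)^{-1}$ with the contraction principle.

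The genuine gap is the step you yourself flag as the main obstacle: the constants. The statement asserts $\mr$-sectoriality under the precise thresholds $a<\bigl(\widetilde M_\beta(A)\widetilde{\mr_\beta}(A)\bigr)^{-1}$ and $\lambda>bM_\beta(A)\widetilde{\mr_\beta}(A)\bigl(1-a\widetilde M_\beta(A)\widetilde{\mr_\beta}(A)\bigr)^{-1}$, i.e.\ under $\widetilde{\mr_\beta}(A)\bigl(a\widetilde M_\beta(A)+bM_\beta(A)/\lambda\bigr)<1$. The route you sketch for $\mr(\{S_z\})$ gives instead
\begin{equation*}
\mr\bigl(\{S_z\}\bigr)\;\leq\; a\,\widetilde{\mr_\beta}(A)\;+\;\frac{b\,\mr_\beta(A)}{\lambda\,\min(1,\sin\beta)},
\end{equation*}
since on $\lambda+\Sigma_\beta$ one only has $|z|\geq\lambda\sin\beta$ when $\beta>\pi/2$. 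Requiring this to be $<1$ is a condition that neither implies nor is implied by the stated hypotheses in general (it involves $\mr_\beta(A)$ and a $1/\sin\beta$ factor where the statement has $M_\beta(A)\widetilde{\mr_\beta}(A)$, and it drops the factor $\widetilde M_\beta(A)$ in front of $a$). So, as written, completing your bookkeeping proves a \emph{variant} of the proposition with different smallness thresholds, not the proposition as stated; to recover the stated constants one must organize the Neumann series differently, as in \cite{KunstmannWeis:Pisa2001}. For the way the proposition is used in this paper (in the proof of \cref{th:R-sec}, where $a=\varepsilon$ is arbitrarily small and one only needs \emph{some} finite $\lambda$), your version would suffice, but you should then either restate the thresholds you actually obtain or close the gap to the quoted constants.
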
 

\section{Local in time existence} \label{sec:local}
The aim of this section is to prove  \cref{th:local-main}. 
\subsection{Change of variables and Linearization}  \label{sec:cov-local}
In this subsection, we consider a change of variables to transform the moving domain $\mathcal{F}(\eta(t))$ into the fixed domain 
$\mathcal{F}$. For this we use the Lagrangian change of variables to write everything in $\mathcal{F}(\eta_1^0)$ and a geometric change of variables to
transform $\mathcal{F}(\eta_1^0)$ into $\mathcal{F}$. Let us start with the second one.

First using that $\mf$ is smooth, there exist an open bounded neighborhood $\widetilde{\mathcal{S}}$ of $\overline{\mathcal{S}}$ in $\mathbb{R}^2$,
$\widetilde\varepsilon>0$ and $\widetilde{\eta}: \widetilde{\mathcal{S}} \to \mathbb{R}$ smooth such that
\begin{equation*}  %
\left(\widetilde{\mathcal{S}} \times [-\widetilde\varepsilon,\widetilde\varepsilon]\right)
\cap
\partial \mathcal{F}
=\left\{(s,\widetilde{\eta}(s)), \ s\in \widetilde{\mathcal{S}}
\right\}.
\end{equation*}
We have in particular that $\widetilde{\eta}\equiv 0$ in $\mathcal{S}$. 
From \eqref{geo00-bis}, \eqref{geo01-bis}, we can extend $\eta_-$ and $\eta_+$ with
\begin{equation*} %
\left\{[y_1,y_2,y_3]^\top\in \widetilde{\mathcal{S}}\times \mathbb{R} \ ; \ y_3\in (\eta_-(y_1,y_2),\widetilde{\eta}(y_1,y_2))\right\} \subset \mathcal{F},
\end{equation*}
\begin{equation*} %
\left\{[y_1,y_2,y_3]^\top\in \widetilde{\mathcal{S}}\times \mathbb{R} \ ; \ y_3\in (\widetilde{\eta}(y_1,y_2),\eta_+(y_1,y_2))\right\} \subset \mathbb{R}^3\setminus \overline{\mathcal{F}}.
\end{equation*}
Using \eqref{cc2}--\eqref{cc3} and that $q>3$, we can extend $\eta_{1}^{0}$ by $0$ in $\mathbb{R}^2\setminus \overline{\ms}$ with
$\eta_{1}^{0} \in W^{2,q}(\mathbb{R}^2).$ 
Then \eqref{geo02} yields the existence of $\varepsilon\in (0,1)$ such that 
\begin{equation*}%
\eta_- (1-\varepsilon) < \eta_1^0 <\eta_+ (1-\varepsilon) \quad \text{in} \ \widetilde{\mathcal{S}}.
\end{equation*}
We consider $\chi \in C_{c}^{\infty}(\mathbb{R}^3)$ such that
$$
\supp \chi \subset \left\{[y_1,y_2,y_3]^\top\in \widetilde{\mathcal{S}} \times \mathbb{R} \ ; \ y_3\in (\eta_-(y_1,y_2),\eta_+(y_1,y_2))\right\},
$$
$$
\chi \equiv 1 \quad \text{in} \ \left\{[y_1,y_2,y_3]^\top\in \mathcal{S}\times \mathbb{R} \ ; \ y_3\in ((1-\varepsilon)\eta_-(y_1,y_2),(1-\varepsilon)\eta_+(y_1,y_2))\right\}.
$$
We also define 
$$
\Lambda(y_1,y_2,y_3)=\eta_1^0(y_1,y_2) \chi(y_1,y_2,y_3)e_3 \quad [y_1,y_2,y_3]^\top\in \mathbb{R}^3
$$
and we consider 
\begin{equation}\label{geo5}
\left\{
\begin{array}{l}
\zeta'(t,y)=\Lambda(\zeta(t,y)), \\
\zeta(0,y)=y\in \mathbb{R}^3.
\end{array}
\right.
\end{equation}
Then 
\begin{equation}\label{geo6}
X^0:=\zeta(1,\cdot)
\end{equation}
is a $C^1$-diffeomorphism such that 
$$
X^0 \equiv \Id \quad \text{in} \ \mathbb{R}^3\setminus \left\{[y_1,y_2,y_3]^\top\in {\mathcal{S}}\times \mathbb{R} \ ; \ y_3\in (\eta_-(y_1,y_2),\eta_+(y_1,y_2))\right\}
$$
$$
X^0(\Gamma_S(0))=\Gamma_S(\eta_1^0),
$$
\begin{multline*}
X^0\left(\left\{[y_1,y_2,y_3]^\top\in \mathcal{S}\times \mathbb{R} \ ; \ y_3\in (\eta_-(y_1,y_2),0)\right\}\right)
\\
=\left\{[y_1,y_2,y_3]^\top\in \mathcal{S}\times \mathbb{R} \ ; \ y_3\in (\eta_-(y_1,y_2),\eta_1^0(y_1,y_2))\right\}.
\end{multline*}
In particular, $X^0$ is a $C^1$-diffeomorphism such that $X^0(\mathcal{F})=\mathcal{F}(\eta_1^0)$ and such that $X^0=\Id$ on $\Gamma_0$.

We consider the characteristics $X$ associated with the fluid velocity $\widetilde v$:
\begin{equation} \label{X-def}
\begin{dcases}
\partial_{t} X(t,y) = \widetilde v(t, X(t,y)) \qquad (t  > 0), \\
X(0,y) = X^0(y), \quad y   \in \overline{\mathcal{F}}.
\end{dcases}
\end{equation}
Assume that $X$ is a $C^{1}$-diffeomorphism from $\overline{\mathcal{F}}$ onto $\overline{\mf(\eta(t))}$ for all $t \in (0,T).$ 
For each $t \in (0,T),$   we denote by $Y(t,\cdot) = [X(t,\cdot)]^{-1}$ the inverse of $X(t,\cdot)$.  We consider the following change of variables
\begin{equation}
\begin{array}{c}
  \rho(t,y)  = \widetilde \rho(t,X(t,y)) , \qquad  v (t,y)   = \widetilde  v(t,X(t,y)), \\
  \vartheta(t,y)  = \widetilde  \vartheta(t,X(t,y)), \qquad    \pi  = R_0  \rho   \vartheta +\pi_0, \label{lpq01}
\end{array}
\end{equation}
for $(t,y) \in (0,T) \times \mf.$  In particular,
\begin{equation*}
\widetilde \rho(t,x)   =   \rho(t,Y(t,x)), \quad  \widetilde  v(t,x)  = v(t,Y(t,x)),\quad
\widetilde  \vartheta(t,x)   =   \vartheta(t,Y(t,x)),
\end{equation*}
for $(t,x) \in (0,T) \times \mf(\eta(t))$. We introduce the notation
\begin{equation} \label{mat-Azv}
\mathbb{B}_X:=\Cof \nabla X, \quad \delta_X:=\det \nabla X, \quad
\mathbb{A}_{X} :=   \frac{1}{\delta_X} \mathbb{B}_X^{\top} \mathbb{B}_X,
\end{equation}
\begin{equation} \label{mat-Azv0}
\mathbb{B}^0:=\mathbb{B}_{X^0}, \quad 
\delta^0:=\delta_{X^0}, \quad
\mathbb{A}^0 :=   \mathbb{A}_{X^0}.
\end{equation}

This change of variables transforms \eqref{eq:main01}--\eqref{eq:force-fsi} into the following system for
$\vect{\rho, v, \vartheta, \eta}$:
\begin{equation} \label{NL0.0} 
\left\{
\begin{array}{ll}
     \displaystyle \partial_{t}   \rho + \frac{\rho^{0}}{\delta^0}  \nabla v :  \mathbb{B}^0  = {F}_{1} & \mbox{ in } (0,T) \times \mf, \\
        \rho(0,\cdot) = \rho^{0} & \mbox{ in } \mf, \\
\end{array}   
\right.
\end{equation}
\begin{equation} \label{NL0.1} 
\left\{
\begin{array}{ll}     
\partial_{t}  v - \mathcal{L}v = {F}_{2}  & \mbox{ in } (0,T) \times \mf,\\
        v = 0 &\mbox{ on } (0,T) \times \Gamma_{0},\\
        v =  \partial_{t} \eta e_{3} & \mbox{ on } (0,T) \times \Gamma_{S}, \\
      v(0,\cdot) = v^{0}  & \mbox{ in } \mf,
\end{array}   
\right.
\end{equation}
\begin{equation} \label{NL0.2} 
\left\{
\begin{array}{ll}     
\partial_{t} \vartheta - \dfrac{\kappa}{c_{v} \rho^{0} \delta^0} \div \left( \mathbb{A}^0 \nabla \vartheta \right) 
= {F}_{3} & \mbox{ in } (0,T) \times \mf, \\
     \mathbb{A}^0 \nabla \vartheta \cdot n  = {G} &  \mbox{ on } (0,T) \times \partial \mf,  \\
        \vartheta(0,\cdot) = \vartheta^{0}  & \mbox{ in } \mf,
\end{array}   
\right.
\end{equation}
\begin{equation} \label{NL0.3} 
\left\{
\begin{array}{ll}     
\partial_{tt} \eta  + \Delta_{s}^{2} \eta - \Delta_{s} \partial_{t}\eta =  {H}   & \mbox{ in } (0,T) \times \ms, \\
\eta  = \nabla \eta \cdot n_{S}= 0   & \mbox{ on } (0,T) \times \partial \ms, \\
\eta(0,\cdot) = \eta_{1}^{0}, \quad \partial_{t} \eta(0,\cdot) = \eta_{2}^{0}  & \mbox{ in } \ms, 
\end{array}   
\right.
\end{equation}
where we have used the following notation
\begin{equation}
\rho^{0} : = \widetilde \rho^{0} \circ X^0,
\quad 
v^{0} := \widetilde v^{0}\circ X^0,
\quad
 \vartheta^{0} := \widetilde \vartheta^{0} \circ X^0,
\end{equation}
\begin{equation}\label{defcalL}
\mathcal{L}v =   \frac{1}{\rho^{0} \delta^0} \div \mathbb{T}^0(v),
\quad \mathbb{T}^0(v):= \mu \nabla v\mathbb{A}^0 + \frac{\mu+\alpha}{\delta^0} \mathbb{B}^0 (\nabla v)^\top \mathbb{B}^0
\end{equation}
\begin{equation}\label{defF1}
F_1( \rho,  v, \vartheta, \eta):=\frac{\rho^{0}}{\delta^0}  \nabla v :  \mathbb{B}^0-\frac{\rho}{\delta_X}  \nabla v :  \mathbb{B}_X
\end{equation}
\begin{multline}\label{defF2}
F_2( \rho,  v, \vartheta, \eta):=\frac{1}{\rho^{0}\delta^0}
\Bigg[
\left( \rho^0 \delta^0-\rho \delta_X \right)\partial_t v
+\mu \div \left( \nabla v\left(\mathbb{A}_X-\mathbb{A}^0\right) \right)
\\
+(\mu+\alpha) \div \left[ \frac{1}{\delta_X} \mathbb{B}_X (\nabla v)^\top \mathbb{B}_X-\frac{1}{\delta^0} \mathbb{B}^0 (\nabla v)^\top \mathbb{B}^0 \right]
+R_0\mathbb{B}_X \nabla (\rho\vartheta)
\Bigg]
\end{multline}
\begin{multline}\label{defF3}
F_3( \rho,  v, \vartheta, \eta):=\frac{1}{c_{v} \rho^{0}\delta^0}
\Bigg[
c_{v}\left( \rho^0 \delta^0 -\rho \delta_X\right)\partial_t \vartheta
+\kappa \div \left( \left(\mathbb{A}_X-\mathbb{A}^0\right) \nabla \vartheta\right)
\\
+\frac{\alpha}{\delta_X} \left(\mathbb{B}_X :\nabla v\right)^2
+\frac{\mu}{2\delta_X} \left|\nabla v \mathbb{B}_X^\top+\mathbb{B}_X \nabla v ^\top \right|^2
-(R_0\rho \vartheta+\pi_0) \nabla v:\mathbb{B}_X
\Bigg]
\end{multline}
\begin{equation}\label{G}
{G}( \rho,  v, \vartheta, \eta)  = \left(\mathbb{A}^0-\mathbb{A}_X\right) \nabla \vartheta\cdot n
\end{equation}
\begin{equation}\label{H}
{H}( \rho,  v, \vartheta, \eta)  = 
-\frac{\mu}{\delta_X} \left(\nabla v \mathbb{B}_X^\top+\mathbb{B}_X \nabla v ^\top\right) \begin{bmatrix}
-\nabla_s \eta \\ 1
\end{bmatrix}\cdot e_3
-\frac{\alpha}{\delta_X} \nabla v:\mathbb{B}_X +R_0\rho\vartheta+\pi_0.
\end{equation}

The characteristics $X$ defined in \eqref{X-def} can now be written as 
\begin{align} \label{Jacobi}
X(t,y)  = X^0(y) + \int_{0}^{t}  v(r,y) \ {\rm d}r, \quad 
\end{align}
for every $y\in \mf$ and $t\geqslant 0$. 

The hypotheses \eqref{cc1}--\eqref{cc4} on the initial conditions are transformed into the following conditions 
\begin{gather}
\rho^{0} \in W^{1,q}(\mf), \quad \min_{\overline{\mf}} \rho^{0}  > 0, \label{rho0pos} \\
\eta_{1}^{0} \in B^{2(2-1/p)}_{q,p}(\ms), \quad \eta_{1}^{0} = \ds \nabla_{s} \eta_{1}^{0} \cdot n_{S} = 0 \mbox{ on } \ms,  \label{eta10pos}\\
v^{0} \in B^{2(1-1/p)}_{q,p}(\mf)^{3}, \quad
	\vartheta^{0} \in B^{2(1-1/p)}_{q,p}(\mf), \quad 
	\quad \eta_{2}^{0} \in B^{2(1-1/p)}_{q,p}(\ms), \label{cc5} \\
 v^{0} = 0 \mbox{ on } \Gamma_{0}, \quad v^{0} = \eta_{2}^{0} e_{3} \mbox{ on }  \Gamma_{S}, 
 	\quad   \eta_{2}^{0} = 0 \mbox { on } \partial \ms,  \label{ci02} \\
\ds \nabla \eta_{2}^{0} \cdot n_{S} = 0 \mbox{ on } \partial \ms \quad \text{and} \quad \mathbb{A}^0 \nabla \vartheta^{0} \cdot n   = 0 \quad \mbox{on} \ \partial\mf  \quad \mbox{if} \quad  \displaystyle \frac{1}{p} + \frac{1}{2q} < \frac{1}{2}. \label{ci03}
\end{gather}
Here $n$ is the unit normal to $\partial \mf$ outward to $\mf.$ 
The regularity properties in \eqref{rho0pos} and \eqref{cc5} can be obtained from \eqref{cc1}, \eqref{cc2} by applying \cite[Lemma 2.1]{MT20}.
Using the above change of variables, our main result in \cref{th:local-main} can be rephrased as 
\begin{Theorem}  \label{th:local-2}
Assume $(p,q)$ satisfies \eqref{pqpqpq} and that
$[\rho^{0}, v^{0}, \vartheta^{0}, \eta_{1}^{0}, \eta_{2}^{0}]^{\top}$ 
satisfies \eqref{rho0pos}--\eqref{ci03} and \eqref{geo02}.
Then there exists $T > 0$ such that the system \eqref{NL0.0}--\eqref{Jacobi}  
admits a unique strong solution 
$$
\vect{\rho, v, \vartheta, \eta} \in W^{1,p}(0,T;W^{1,q}(\mf))\times \left( W^{1,2}_{p,q} ((0,T) ; \mf\right)^{3}\times W^{1,2}_{p,q} ((0,T) ; \mf)
\times W^{2,4}_{p,q}((0,T) ; \ms )
$$
Moreover, 
$$
\min_{[0,T]\times \overline{\mf}} \rho  > 0, \quad \Gamma_{0} \cap \Gamma_S(\eta(t)) = \emptyset \quad (t\in [0,T]),
$$
and for all $t \in [0,T]$, $X(t,\cdot): \mf \to \mf(\eta(t))$ is a $C^{1}$-diffeomorphism.
\end{Theorem}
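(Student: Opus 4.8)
The plan is to solve \eqref{NL0.0}--\eqref{Jacobi} by a Banach fixed-point argument whose two ingredients are the maximal $L^p$--$L^q$ regularity of the linear system associated with \eqref{NL0.0}--\eqref{NL0.3} (proved in \cref{sec:local-max-lp}) and the estimates of the nonlinear terms $F_1,F_2,F_3,G,H$ on a short time interval (carried out in \cref{sec:23}). The structural fact that makes this work is that, by \eqref{Jacobi}, $\nabla X-\nabla X^0=\int_0^t\nabla v\,\rd r$, so $\mathbb{B}_X-\mathbb{B}^0$, $\delta_X-\delta^0$, $\mathbb{A}_X-\mathbb{A}^0$ (and, at a fixed point, $\rho-\rho^0$) all vanish at $t=0$; inspecting \eqref{defF1}--\eqref{H}, each nonlinear term carries such a factor, or is a product of quantities that are bounded on the working ball and that, measured in $L^p$ in time, gain a positive power of $T$. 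Hence the nonlinear map can be made contractive by shrinking $T$.

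First I would fix
\[
\mx_T=W^{1,p}(0,T;W^{1,q}(\mf))\times W^{1,2}_{p,q}((0,T);\mf)^3\times W^{1,2}_{p,q}((0,T);\mf)\times W^{2,4}_{p,q}((0,T);\ms),
\]
let $(\rho_\ell,v_\ell,\vartheta_\ell,\eta_\ell)$ be the solution of the linear system obtained from \eqref{NL0.0}--\eqref{NL0.3} by setting $F_1=\dots=H=0$, with the initial and boundary data of \eqref{rho0pos}--\eqref{ci03}, and define, on the ball
\[
\mathcal{B}_R=\bigl\{z\in\mx_T:\ z\ \text{attains the prescribed initial data and}\ \|z-(\rho_\ell,v_\ell,\vartheta_\ell,\eta_\ell)\|_{\mx_T}\le R\bigr\},
\]
the map $\mathcal{N}$ sending $z$ to the solution of the same linear system with right-hand sides $F_1,\dots,H$ evaluated at $z$. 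This linear system is partially decoupled: the density equation is, once the velocity is known, a linear equation integrated in time, so no PDE theory is needed for $\rho$; the pair $(v,\eta)$ solves a coupled parabolic fluid--plate system with the interface condition $v=\partial_t\eta\,e_3$ on $\Gamma_S$; and $\vartheta$ solves a scalar parabolic equation with the conormal condition $\mathbb{A}^0\nabla\vartheta\cdot n=G$. By the maximal regularity statement of \cref{sec:local-max-lp}, $\mathcal{N}(z)\in\mx_T$ with
\[
\|\mathcal{N}(z)-(\rho_\ell,v_\ell,\vartheta_\ell,\eta_\ell)\|_{\mx_T}\le C\bigl(\|F_1\|+\|F_2\|+\|F_3\|+\|G\|+\|H\|\bigr)
\]
uniformly for $T\in(0,1]$, where $F_1,\dots,H$ are measured in the spaces in which they appear as data in that statement; $(\rho_\ell,v_\ell,\vartheta_\ell,\eta_\ell)$ itself has the full regularity of $\mx_T$, continuous up to $t=0$ in the data spaces by \eqref{embama}, \eqref{embama2}, so there is no loss of regularity at the initial time. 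The coefficients $\mathbb{A}^0,\mathbb{B}^0,\delta^0$ are variable — since $\eta_1^0$ need not vanish — but time-independent and smooth, so this step combines constant-coefficient parabolic theory with a localization argument; the compatibility conditions in \eqref{ci03} enter when $\tfrac1p+\tfrac1{2q}<\tfrac12$, the excluded value $\tfrac12$ in \eqref{pqpqpq} being the borderline case for the relevant trace spaces.

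Next I would show that, for $T$ small enough that \eqref{defF1}--\eqref{H} are well defined on $\mathcal{B}_R$ (which holds since then $\delta_X$ is close to $\delta^0>0$ and the density to $\rho^0>0$ in $C^0$, by \eqref{Jacobi} and $q>3$), one has
\[
\|F_1\|+\|F_2\|+\|F_3\|+\|G\|+\|H\|\le C(R)\,T^{\theta},\qquad \|\mathcal{N}(z_1)-\mathcal{N}(z_2)\|_{\mx_T}\le C(R)\,T^{\theta}\,\|z_1-z_2\|_{\mx_T}
\]
for some $\theta>0$ and all $z_1,z_2\in\mathcal{B}_R$; this relies on \eqref{Jacobi} (giving $\nabla X-\nabla X^0=O(T^{1-1/p})$ in the relevant norm), Hölder's inequality in time, the Sobolev embeddings valid under \eqref{pqpqpq}, and the multiplication properties of the spaces involved. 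Taking $R=1$ and then $T\in(0,1]$ so small that $C(1)\,T^{\theta}\le\tfrac12$, the map $\mathcal{N}$ is a $\tfrac12$-contraction of $\mathcal{B}_1$ into itself, and its fixed point is the sought solution of \eqref{NL0.0}--\eqref{Jacobi}. From it one reads the remaining assertions: at the fixed point the density equation integrates to $\rho\,\delta_X=\rho^0\delta^0$ (using $\partial_t\delta_X=\mathbb{B}_X:\nabla v$), whence $\min_{[0,T]\times\overline{\mf}}\rho>0$; $\Gamma_0\cap\Gamma_S(\eta(t))=\emptyset$ follows from \eqref{geo02} and $\|\eta(t)-\eta_1^0\|_{C^0(\overline{\ms})}\to0$ as $T\to0$, using $W^{2,4}_{p,q}((0,T);\ms)\hookrightarrow C^0([0,T];B^{2(2-1/p)}_{q,p}(\ms))\hookrightarrow C^0([0,T];C^0(\overline{\ms}))$; and $X(t,\cdot)\colon\mf\to\mf(\eta(t))$ is a $C^1$-diffeomorphism because it is locally invertible ($\|\nabla X(t)-\nabla X^0\|_{C^0(\overline{\mf})}$ small, $X^0$ a $C^1$-diffeomorphism) and maps $\partial\mf$ onto $\partial\mf(\eta(t))$ — indeed $X(t,\cdot)=\Id$ on $\Gamma_0$, while on $\Gamma_S$ the interface condition and $\eta(0,\cdot)=\eta_1^0$ give $X(t,s,0)=(s,\eta_1^0(s))+\bigl(\int_0^t\partial_t\eta(r,s)\,\rd r\bigr)e_3=(s,\eta(t,s))$ — so global bijectivity follows by a degree argument.

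Uniqueness within the stated regularity class follows from the contraction property: two strong solutions issued from the same data agree on a (possibly shorter) interval on which both belong to some $\mathcal{B}_R$, hence on the whole interval by a continuation argument. The step I expect to be the main obstacle is the maximal $L^p$--$L^q$ regularity of the coupled velocity--plate block in \cref{sec:local-max-lp}: the interface condition $v=\partial_t\eta\,e_3$ genuinely ties the parabolic fluid operator to the structurally damped plate equation, and establishing maximal regularity for this coupled system on a finite interval — while carrying the variable coefficients $\mathbb{A}^0,\mathbb{B}^0,\delta^0$ forced by allowing $\eta_1^0\ne0$ — is the technical heart of the argument; the detailed nonlinear estimates of \cref{sec:23}, particularly for the viscous-heating terms in $F_3$, are the other, more tedious, source of work.
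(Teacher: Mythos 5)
Your proposal is correct and follows the same overall strategy as the paper: a Banach fixed point built on the maximal $L^p$--$L^q$ regularity of the linearized system and on $T^\theta$-smallness of the nonlinearities (obtained from $\nabla X-\nabla X^0=\int_0^t\nabla v\,\rd r$, H\"older in time, and the algebra property of $W^{1,q}(\mf)$ for $q>3$). Two differences are worth recording. First, you set up the contraction on a ball in the solution space $\mx_T$ centred at the solution of the homogeneous linear problem, whereas the paper contracts the map $\Xi_{T,R}$ on a ball of the source-term space $\mathcal{R}_{T,p,q}$ (see \eqref{Xi}); the two are equivalent, but the paper's choice avoids having to describe which elements of $\mx_T$ carry the prescribed initial data and compatibility conditions. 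Second, and more substantively, you describe the linear $(v,\eta)$ problem as a \emph{coupled} velocity--plate block and single out its maximal regularity as the main obstacle. In the paper's linearization this coupling does not occur: the entire fluid-to-plate force $\mathbb{H}$ is placed in the nonlinearity $H$ (see \eqref{H}), so the plate equation \eqref{NL1.3} has a free source term $h$ and is solved first, independently, via the $\mr$-sectoriality of the damped plate operator; the velocity problem \eqref{NL1.1} then sees $\partial_t\eta\,e_3$ only as known inhomogeneous Dirichlet data, which is lifted by a cut-off and reduced to a standard variable-coefficient parabolic problem treated by Denk--Hieber--Pr\"uss. The linear system is thus triangular ("solved in cascades"), and the one-way boundary coupling costs nothing; a genuinely monolithic coupled operator is only needed for the global result of \cref{sec_glo}. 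Your observation that the density equation integrates to $\rho\,\delta_X=\rho^0\delta^0$, giving positivity of $\rho$ directly, is a clean alternative to the paper's continuity argument $\|\rho-\rho^0\|_{L^\infty(0,T;W^{1,q}(\mf))}\le C_RT^{1/p'}$; the remaining assertions (non-contact, diffeomorphism property) are handled as in the paper.
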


\subsection{Maximal $L^{p}$-$L^{q}$ regularity of a linear system.}  \label{sec:local-max-lp}
The proof of \cref{th:local-2} relies on the Banach fixed point theorem and on maximal $L^{p}$-$L^{q}$ estimates of a linearized system. By replacing the nonlinear terms ${F}_{1}, {F}_{2}, {F}_{3}, {G}$ and ${H}$ in \eqref{NL0.0}--\eqref{NL0.3} by given source terms $f_{1}, f_{2}, f_{3}, g$ and $h$ we obtain the following linear system 
\begin{equation} \label{NL1.0} 
\left\{
\begin{array}{ll}
     \displaystyle \partial_{t}   \rho + \frac{\rho^{0}}{\delta^0}  \nabla v :  \mathbb{B}^0  = {f}_{1} & \mbox{ in } (0,T) \times \mf, \\
        \rho(0,\cdot) = \widehat\rho^{0} & \mbox{ in } \mf, \\
\end{array}   
\right.
\end{equation}
\begin{equation} \label{NL1.1} 
\left\{
\begin{array}{ll}     
\partial_{t}  v - \mathcal{L}v = {f}_{2}  & \mbox{ in } (0,T) \times \mf,\\
        v = 0 &\mbox{ on } (0,T) \times \Gamma_{F},\\
        v =  \partial_{t} \eta e_{3} & \mbox{ on } (0,T) \times \Gamma_{S}, \\
      v(0,\cdot) = v^{0}  & \mbox{ in } \mf,
\end{array}   
\right.
\end{equation}
\begin{equation} \label{NL1.2} 
\left\{
\begin{array}{ll}     
\partial_{t} \vartheta - \dfrac{\kappa}{c_{v} \rho^{0} \delta^0} \div \left( \mathbb{A}^0 \nabla \vartheta \right) 
= {f}_{3} & \mbox{ in } (0,T) \times \mf, \\
     \mathbb{A}^0 \nabla \vartheta \cdot n  = {g} &  \mbox{ on } (0,T) \times \partial \mf,  \\
        \vartheta(0,\cdot) = \vartheta^{0}  & \mbox{ in } \mf,
\end{array}   
\right.
\end{equation}
\begin{equation} \label{NL1.3} 
\left\{
\begin{array}{ll}     
\partial_{tt} \eta  + \Delta_{s}^{2} \eta - \Delta_{s} \partial_{t}\eta =  {h}   & \mbox{ in } (0,T) \times \ms, \\
\eta  = \nabla \eta \cdot n_{S}= 0   & \mbox{ on } (0,T) \times \partial \ms, \\
\eta(0,\cdot) = \widehat\eta_{1}^{0}, \quad \partial_{t} \eta(0,\cdot) = \eta_{2}^{0}  & \mbox{ in } \ms, 
\end{array}   
\right.
\end{equation}
where $\mathbb{A}^0, \mathbb{B}^0, \delta^0$  are defined in \eqref{mat-Azv0} and where $\mathcal{L}$ is defined by \eqref{defcalL}. 
Note that we also modify the initial conditions in the above system with respect to \eqref{NL0.0}--\eqref{NL0.3}  
since $\rho^0$ and $\eta_1^0$ already appear in the coefficients of \eqref{NL1.0}--\eqref{NL1.3}.
In the next section, we will take
$$
\widehat\rho^{0}=\rho^{0}, \quad \widehat\eta_{1}^{0}=\eta_{1}^{0}
$$
but here we do not assume the above relation. In particular, we assume that $\rho^0$ satisfies the second condition of \eqref{rho0pos}
and that $\eta_1^0$ satisfies \eqref{geo02} but we do not impose these hypotheses on $\widehat\rho^{0}$ and on $\widehat\eta_{1}^{0}$.

We recall that $(p,q)$ satisfies \eqref{pqpqpq} and to simplify, we assume
throughout this section that  
\begin{equation*} %
T\in (0,1].
\end{equation*} 
This condition is only used to avoid the dependence in time of the constants in the estimates of this section.

We consider the subset of initial conditions 
\begin{multline}
\mathcal{I}_{p,q} = 
\Bigg\{ \vect{\widehat\rho^{0},v^{0},\vartheta^{0},\widehat\eta_{1}^{0},\eta_{2}^{0} } 
		\in W^{1,q}(\mf)\times B^{2(1-1/p)}_{q,p}(\mf)^{3}\times B^{2(1-1/p)}_{q,p}(\mf)\times B^{2(2-1/p)}_{q,p}(\ms)\times B^{2(1-1/p)}_{q,p}(\ms), \\
 v^{0} = 0 \mbox{ on } \Gamma_{0}, \quad v^{0} = \eta_{2}^{0} e_{3} \mbox{ on }  \Gamma_{S}, 
 \quad \widehat \eta_{1}^{0} = \ds \frac{\partial \widehat \eta_{1}^{0}}{\partial n_S} = \eta_{2}^{0} = 0 \mbox { on } \partial \ms,  \\
\ds \frac{\partial \eta_{2}^{0}}{\partial n_S} = 0 \mbox { on } \partial \ms
\quad \text{and} \quad
\mathbb{A}^0 \nabla \vartheta^{0} \cdot n   = 0 \mbox{ on } \partial\mf   \quad \mbox{if} \quad  \displaystyle \frac{1}{p} + \frac{1}{2q} < \frac{1}{2}
\Bigg\},
\end{multline}
endowed with the norm
\begin{multline*}
\left\| \vect{\widehat\rho^{0},v^{0},\vartheta^{0},\widehat\eta_{1}^{0},\eta_{2}^{0} }  \right\|_{\mathcal{I}_{p,q}} 
	:=  \|\widehat\rho^{0} \|_{W^{1,q}(\mf)} + \| v^{0} \|_{B^{2(1-1/p)}_{q,p}(\mf)^{3}} + \|\vartheta^{0} \|_{B^{2(1-1/p)}_{q,p}(\mf)} \\ 
+ \|\widehat\eta_{1}^{0}\|_{ B^{2(2-1/p)}_{q,p}(\ms)} + \|\eta_{2}^{0} \|_{ B^{2(1-1/p)}_{q,p}(\ms)}.
\end{multline*}
We also consider the space  ${\mathcal R}_{T,p,q}$ of the source terms in \eqref{NL1.0}--\eqref{NL1.3}:
\begin{multline}\label{com0.5}
\mathcal{R}_{T,p,q} = \Big\{ \vect{f_{1}, f_{2}, f_{3}, g, h} \mid f_{1} \in  L^{p}(0,T,W^{1,q}(\mf)), f_{2} \in  L^{p}(0,T;L^{q}(\mf))^{3}, \\ f_{3} \in  L^{p}(0,T;L^{q}(\mf)),
g \in F^{(1-1/q)/2}_{p,q}(0,T;L^{q}(\partial \mf)) \cap L^{p}(0,T;W^{1-1/q,q}(\partial \mf)), \\
h \in L^{p}(0,T;L^{q}(\ms)),   \mbox{  with } g(0,\cdot) = 0  \mbox{ if } \displaystyle \frac{1}{p} + \frac{1}{2q} < \frac{1}{2} \Big\},
\end{multline}
with
\begin{multline*}
\|\vect{f_{1},f_{2}, f_{3}, g, h}\|_{\mathcal{R}_{T,p,q}} 
=  
\|f_{1}\|_{L^{p}(0,T;W^{1,q}(\mf))} + \|f_{2}\|_{L^{p}(0,T;L^{q}(\mf))^{3}}   
+ \|f_{3}\|_{L^{p}(0,T;L^{q}(\mf))} \\ + \|g\|_{F^{(1-1/q)/2}_{p,q}(0,T;L^{q}(\partial \mf)) \cap L^{p}(0,T;W^{1-1/q,q}(\partial \mf))} 
+ \|h\|_{L^{p}(0,T;L^{q}(\ms))}. 
\end{multline*}
Finally, the space  ${\mathcal W}_{T,p,q}$ of the solutions 
$[\rho,u,\vartheta, \eta]^\top$ of \eqref{NL1.0}--\eqref{NL1.3} is the Cartesian product:
\begin{equation} \label{solspace}
\mathcal{W}_{T,p,q} = W^{1,p}(0,T;W^{1,q}(\mf))\times  W^{1,2}_{p,q} ((0,T) ; \mf)^{3}\times W^{1,2}_{p,q} ((0,T) ; \mf)
\times W^{2,4}_{p,q}((0,T) ; \ms ),
\end{equation}
with the norm
$$
\|\vect{\rho,u,\vartheta,\eta}\|_{\mathcal{W}_{T,p,q}}  := \|\rho\|_{W^{1,p}(0,T;W^{1,q}(\mf))} +  \|u\|_{W^{1,2}_{p,q} ((0,T) ; \mf)^{3}}    
	+ \|\vartheta\|_{W^{1,2}_{p,q}((0,T) ; \mf)}   + \|\eta\|_{W^{2,4}_{p,q}((0,T) ; \ms)}.
$$

With the above notation, we can state the main result of this section:
\begin{Theorem} \label{thm-linear}
Assume \eqref{pqpqpq} \eqref{rho0pos},  \eqref{eta10pos} and \eqref{geo02}.
Then for any 
\begin{equation}\label{lininisour}
\vect{\widehat \rho^{0}, v^{0}, \vartheta^{0}, \widehat \eta_{1}^{0}, \eta_{2}^{0}} \in \mathcal{I}_{p,q},
\quad
\vect{f_{1},f_{2},f_{3}, g, h} \in \mathcal{R}_{T,p,q},
\end{equation}
the system \eqref{NL1.0}--\eqref{NL1.3} admits a unique solution $[\rho, v,  \vartheta,\eta]^\top \in \mathcal{W}_{T,p,q}$
and there exists a constant $C > 0$ depending  on $p,q$  and independent of $T$ such that
\begin{equation} \label{est:thm1}
\left\| \vect{ \rho,v, \vartheta,\eta} \right\|_{\mathcal{W}_{T,p,q}}
  \leqslant C  \Big(  \norm{ \vect{\widehat \rho^{0}, v^{0}, \vartheta^{0},\widehat \eta_{1}^{0}, \eta_{2}^{0}} }_{\mathcal{I}_{p,q}}  +  \norm{ \vect{f_{1},f_{2}, f_{3}, g, h} }_{\mathcal{R}_{T,p,q}} \Big).
\end{equation}
\end{Theorem}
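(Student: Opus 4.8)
The plan is to exploit the structure of the linear system \eqref{NL1.0}--\eqref{NL1.3}: it is only \emph{partially} coupled, in the sense that the density equation \eqref{NL1.0} is a transport-type ODE in time (no spatial derivatives on $\rho$) driven by $v$, the temperature equation \eqref{NL1.2} is an autonomous parabolic Neumann problem with no coupling back to $v$ or $\eta$, and only the velocity--plate block \eqref{NL1.1}, \eqref{NL1.3} is genuinely coupled through the interface conditions $v = \partial_t\eta\, e_3$ on $\Gamma_S$ and the forcing $h$ on the plate. So the scheme is: first solve the $(v,\eta)$-block, then read off $\rho$ by integration in time, then solve the $\vartheta$-equation (or, since $\vartheta$ is fully decoupled, solve it first — the order is irrelevant). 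The constants must be shown independent of $T\in(0,1]$, which is why $T\le 1$ was imposed; this is handled by extending source terms by zero, or by the standard trick of working on $(0,1)$ after a zero extension, so that each time-dependent estimate is on a fixed interval.

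First I would treat the decoupled temperature equation \eqref{NL1.2}. Its operator $\vartheta \mapsto \frac{\kappa}{c_v\rho^0\delta^0}\div(\mathbb{A}^0\nabla\vartheta)$ with the conormal boundary condition $\mathbb{A}^0\nabla\vartheta\cdot n = g$ is, up to the positive multiplier $\frac{\kappa}{c_v\rho^0\delta^0}$ (bounded above and below by \eqref{rho0pos} and the fact that $X^0$ is a $C^1$-diffeomorphism, so $\delta^0$ and $\mathbb{A}^0$ are bounded with positive-definite $\mathbb{A}^0$), a second-order elliptic operator in divergence form with $C^1$-ish coefficients. Its maximal $L^p$--$L^q$ regularity with inhomogeneous conormal data, including the trace space $F^{(1-1/q)/2}_{p,q}(0,T;L^q(\partial\mf))\cap L^p(0,T;W^{1-1/q,q}(\partial\mf))$ for $g$ and the compatibility condition $g(0)=0$ when $\frac1p+\frac1{2q}<\frac12$, is classical (Denk--Hieber--Pr\"uss / Weis theory, or the references already quoted in \cref{sec_back}); one reduces to $g=0$ by lifting $g$ to a function in $W^{1,2}_{p,q}((0,T);\mf)$ with the stated norm control, then applies \cref{thm:weis-Lp-maxreg-char} and \cref{thm:max-reg-g} to the perturbed generator, after checking $\mr$-sectoriality of angle $>\pi/2$, which for a uniformly elliptic operator with continuous leading coefficients is standard and can be obtained by \cref{pr:perturb} from the constant-coefficient case via a localization/freezing-the-coefficients argument. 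This yields $\vartheta\in W^{1,2}_{p,q}((0,T);\mf)$ with the required estimate.

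Next the velocity--plate block. The operator $\mathcal{L}$ defined in \eqref{defcalL} is, again up to the positive multiplier $1/(\rho^0\delta^0)$, a second-order operator of Lam\'e type with $C^1$ coefficients (structurally $\div(\mu\nabla v\,\mathbb{A}^0 + \dots)$, which is uniformly elliptic because $\mu>0$ and $\mu+\alpha$ combine through the Korn-type structure — or, more robustly, because this is a small perturbation of the constant-coefficient case where $\mathbb{A}^0=\mathbb{B}^0=I_3$, $\delta^0=1$). The coupled system \eqref{NL1.1}+\eqref{NL1.3} with the kinematic constraint on $\Gamma_S$ is exactly of the form whose maximal $L^p$--$L^q$ regularity is the subject of the later Section \ref{sec:max-lin-g} for the \emph{global} problem, but here over a finite interval one can argue more cheaply: split off the nonhomogeneous Dirichlet datum $\partial_t\eta\, e_3$ on $\Gamma_S$ by a lifting, reduce to homogeneous boundary conditions, and then either (a) invoke the known maximal regularity for the damped-plate equation $\partial_{tt}\eta+\Delta_s^2\eta-\Delta_s\partial_t\eta = h$ with clamped boundary conditions (a parabolic-type problem, analytic semigroup on the appropriate product space, giving $\eta\in W^{2,4}_{p,q}((0,T);\ms)$) together with the parabolic regularity for the $v$-equation and close the loop by a fixed-point/contraction on the interface coupling exploiting the gain of a small factor in $T$ (since $T\le 1$ and the coupling terms involve a trace that is controlled with a power of $T$), or (b) treat the whole block monolithically as in the global case. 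I would prefer route (a) for the local result: the coupling $v|_{\Gamma_S}=\partial_t\eta\, e_3$ and the trace of $\mathbb{T}^0(v)$ appearing in $h$ are, after the lift, lower-order relative to the leading parabolic smoothing, so a Neumann-series/contraction argument with constants uniform in $T\in(0,1]$ closes.

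The main obstacle, and the place requiring genuine care rather than citation, is twofold. First, the \emph{variable coefficients}: because the initial plate displacement $\eta_1^0$ need not be zero, $X^0$ is a genuine (not identity) $C^1$-diffeomorphism, so $\mathbb{A}^0,\mathbb{B}^0,\delta^0$ are only $W^{1,q}$-type (hence merely continuous, since $q>3$) rather than smooth or constant. One cannot apply constant-coefficient maximal regularity directly; instead one freezes coefficients at a point, handles the constant-coefficient model problems (half-space, whole space, bent half-space for the curved parts of $\partial\mf$) by the known results, and patches with a partition of unity, absorbing the commutator and coefficient-oscillation terms by \cref{pr:perturb} — this is where the smallness of the localization scale, continuity of the coefficients, and the $T\le1$ normalization all get used to make the perturbation small. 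Second, matching the precise \emph{interface/compatibility bookkeeping} — ensuring the lift of $\partial_t\eta\,e_3$ on $\Gamma_S$ and of $g$ on $\partial\mf$ lands in the right anisotropic space with the stated compatibility at $t=0$ (which is exactly why the case distinction $\frac1p+\frac1{2q}\lessgtr\frac12$ and the condition $\frac1p+\frac1{2q}\neq\frac12$ appear), and that the norm of the lift is controlled by $\|[\ldots]\|_{\mathcal{I}_{p,q}}+\|[\ldots]\|_{\mathcal{R}_{T,p,q}}$ with a $T$-independent constant. Once these two points are handled, uniqueness follows from the same a priori estimate \eqref{est:thm1} applied to the difference of two solutions with zero data.
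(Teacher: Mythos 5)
Your overall strategy coincides with the paper's: exploit the triangular structure of \eqref{NL1.0}--\eqref{NL1.3}, obtain maximal $L^p$--$L^q$ regularity for each parabolic block (Denk--Hieber--Pr\"uss for the variable-coefficient velocity and temperature problems, the damped-plate semigroup result plus Weis' theorem for \eqref{NL1.3}), lift the inhomogeneous boundary data by a cut-off, recover $\rho$ by integration in time, and get $T$-independent constants by an extension argument. However, you have misread the coupling structure of the \emph{linear} system. In \eqref{NL1.3} the right-hand side is the \emph{given} source $h$; the trace of the fluid stress $\mathbb{T}^0(v)$ does not appear there. The feedback from $v$ to $\eta$ only enters through the nonlinearity $H$ in the fixed-point iteration of \cref{sec:23} (and, for the global problem, in the monolithic linearization of \cref{sec_glo}). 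Consequently the $(v,\eta)$ block is one-way coupled: one solves \eqref{NL1.3} first, obtains $\eta\in W^{2,4}_{p,q}((0,T);\ms)$, and then \eqref{NL1.1} is simply an inhomogeneous Dirichlet problem with known boundary datum $\partial_t\eta\, e_3$. Your proposed Neumann-series/contraction ``closing the loop'' is therefore aimed at a coupling that is not present; worse, if implemented literally it would either be vacuous or would prove maximal regularity for a different (stress-coupled) system, and its reliance on smallness of $T$ sits uncomfortably with an estimate that must hold with a constant uniform over all $T\in(0,1]$ (concatenation of intervals would rescue this, but none of it is needed).

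A second, smaller point: your justification of the ellipticity of $\mathcal{L}$ as ``a small perturbation of the constant-coefficient case where $\mathbb{A}^0=\mathbb{B}^0=I_3$, $\delta^0=1$'' is not available here, because $\eta_1^0$ is \emph{not} assumed small in \cref{th:local-main}, so $X^0$ is a genuine diffeomorphism and the coefficients are not close to constants. Ellipticity and the Lopatinskii--Shapiro condition must be verified directly for the frozen coefficients at each point; this works because the principal symbol is $\frac{\mu}{\rho^0\delta^0}(\mathbb{A}^0\xi\cdot\xi)I_3+\frac{\mu+\alpha}{\rho^0(\delta^0)^2}(\mathbb{B}^0\xi)\otimes(\mathbb{B}^0\xi)$, a positive-definite matrix thanks to $\mathbb{A}^0\ge c^0 I_3$ (\cref{lem:zeta-reg}), $\mu>0$ and $\mu+\alpha>\mu/3>0$ from \eqref{visco-relation}. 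Your localization/partition-of-unity description of what lies behind the citation is otherwise accurate (there the smallness is that of the coefficient oscillation on a small patch, not of the global deviation from the identity), and the remaining steps --- the lift of $g$ with the compatibility bookkeeping, the treatment of $\rho$, and uniqueness from the a priori estimate --- match the paper.
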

In order to prove the above result, we notice that the system \eqref{NL1.0}--\eqref{NL1.3}
can be solved in ``cascades''.  Systems \eqref{NL1.2} and \eqref{NL1.3} can be solved independently.  With the solution of system \eqref{NL1.3} we can solve the system \eqref{NL1.1} and then  \eqref{NL1.0}.  

We first need the following result on the coefficients appearing in the system \eqref{NL1.0}--\eqref{NL1.3}:
\begin{Lemma} \label{lem:zeta-reg}
Assume \eqref{pqpqpq} \eqref{rho0pos},  \eqref{eta10pos} and \eqref{geo02}. Then $\mathbb{A}^0, \mathbb{B}^0, \delta^0$ defined in \eqref{mat-Azv0} satisfy
$$
\delta^0>0, \quad \mathbb{A}^0=(\mathbb{A}^0)^\top,
\quad
\frac{1}{\delta^0} \in W^{1,q}({\mf}),  \quad \mathbb{B}^0, \mathbb{A}^0\in W^{1,q}({\mf})^{9},
$$
and there exists $c^0>0$ such that
$$
\mathbb{A}^0 \geq c^0 \mathbb{I}_{3} \quad \text{in} \ \overline{\mf}.
$$
\end{Lemma}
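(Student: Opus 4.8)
The plan is to establish the regularity of $\mathbb{A}^0, \mathbb{B}^0, \delta^0$ by tracing through the explicit construction of $X^0$ in \eqref{geo5}--\eqref{geo6}. First I would observe that $X^0 = \zeta(1,\cdot)$ where $\zeta$ solves the ODE flow $\zeta' = \Lambda(\zeta)$ with $\Lambda(y) = \eta_1^0(y_1,y_2)\chi(y)e_3$. Since $\eta_1^0 \in W^{2,q}(\mathbb{R}^2)$ (extended by zero using $q>3$, as recalled after \eqref{geo01-bis}) and $\chi \in C_c^\infty(\mathbb{R}^3)$, the vector field $\Lambda$ lies in $W^{2,q}(\mathbb{R}^3)^3$ with compact support; by the Sobolev embedding $W^{2,q}(\mathbb{R}^3) \hookrightarrow C^1_b(\mathbb{R}^3)$ (again using $q>3$), $\Lambda$ is $C^1$ and Lipschitz, so the flow is well-defined, and standard ODE regularity gives $X^0 = \zeta(1,\cdot)$ a $C^1$-diffeomorphism of $\mathbb{R}^3$ with $\nabla X^0 \in C^0_b$. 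To get the $W^{1,q}$-regularity of $\nabla X^0$, I would differentiate the variational equation $\partial_t \nabla \zeta = (\nabla \Lambda \circ \zeta)\nabla \zeta$ once more in $y$: this produces a linear ODE for the second derivatives $\nabla^2 \zeta$ whose right-hand side involves $\nabla^2 \Lambda \circ \zeta$ (in $L^q$ along the flow, since the flow is bi-Lipschitz and hence preserves $L^q$ up to a constant) times bounded factors, plus lower-order bounded terms; Gronwall in $L^q$ then yields $\nabla^2 \zeta(1,\cdot) \in L^q$, i.e. $X^0 \in W^{2,q}(\mathbb{R}^3)^3$ and hence $\nabla X^0 \in W^{1,q}$.

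Next, with $\nabla X^0 \in W^{1,q}(\mf)^9 \cap C^0_b$ established, I would deduce the claims about $\mathbb{B}^0 = \Cof\nabla X^0$, $\delta^0 = \det\nabla X^0$, $\mathbb{A}^0 = \frac{1}{\delta^0}(\mathbb{B}^0)^\top\mathbb{B}^0$. The cofactor matrix and determinant are polynomial (quadratic, resp. cubic) in the entries of $\nabla X^0$, and since $W^{1,q}(\mf) \cap L^\infty(\mf)$ is a Banach algebra for $q>3$ (indeed $W^{1,q}(\mf)\hookrightarrow C^0_b(\overline\mf)$ when $q>3$, so $W^{1,q}(\mf)$ itself is an algebra), we get $\mathbb{B}^0, \delta^0 \in W^{1,q}(\mf)$. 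For the positivity of $\delta^0$: since $\Lambda$ has the single nonzero component $\Lambda_3 = \eta_1^0\chi$, the Jacobian matrix $\nabla\zeta$ stays close to $\Id$ — more precisely $\delta^0 = \det\nabla X^0 = \exp\left(\int_0^1 (\div\Lambda)(\zeta(s,y))\,ds\right) > 0$ by Liouville's formula, which also shows $\frac{1}{\delta^0}$ is bounded and, being a smooth ($C^\infty$, since $t\mapsto 1/t$ is smooth away from $0$) function of $\delta^0 \in W^{1,q}\cap L^\infty$ bounded away from zero, lies in $W^{1,q}(\mf)$. Then $\mathbb{A}^0 \in W^{1,q}(\mf)^9$ follows from the algebra property, and symmetry $\mathbb{A}^0 = (\mathbb{A}^0)^\top$ is immediate from its definition $\frac{1}{\delta^0}(\mathbb{B}^0)^\top\mathbb{B}^0$.

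Finally, for the coercivity $\mathbb{A}^0 \geq c^0\mathbb{I}_3$ on $\overline\mf$: for any $\xi \in \mathbb{R}^3$ one has $\xi^\top\mathbb{A}^0\xi = \frac{1}{\delta^0}|\mathbb{B}^0\xi|^2$. Since $X^0$ is a $C^1$-diffeomorphism of $\mathbb{R}^3$, $\nabla X^0(y)$ is invertible for every $y$, hence so is $\mathbb{B}^0(y) = \delta^0(y)(\nabla X^0(y))^{-\top}$; the function $y \mapsto |\mathbb{B}^0(y)\xi|^2/|\xi|^2$ is continuous on the compact set $\overline\mf$ and strictly positive, so it attains a positive minimum uniformly in $y$ and $\xi$ on the unit sphere. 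Combined with the upper bound on $1/\delta^0$ (or rather the lower bound on $\delta^0$, both uniform on $\overline\mf$ by continuity and compactness), this gives the uniform ellipticity constant $c^0 > 0$. The main obstacle I anticipate is the second step of the first paragraph, namely extracting $W^{2,q}$-regularity of the flow map $X^0$ from $W^{2,q}$-regularity of the generating vector field $\Lambda$; this requires care with the $L^q$-bounds along the flow and is the only genuinely non-elementary point — everything else reduces to the Banach algebra structure of $W^{1,q}\cap L^\infty$ for $q>3$ and compactness arguments. Alternatively, one could cite a standard result on the regularity of flows of Sobolev vector fields, or simply invoke \cite[Lemma 2.1]{MT20} or an analogous statement from the literature on geometric change of variables, which is the route I would take to keep the exposition short.
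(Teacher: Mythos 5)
Your proposal is correct and follows essentially the same route as the paper: the paper's (much terser) proof also obtains $X^0\in W^{2,q}(\mathcal F)^3$ with $\nabla X^0$ invertible by studying the ODE \eqref{geo5} satisfied by the spatial derivatives of the flow $\zeta$, and then deduces the properties of $\mathbb{B}^0,\delta^0,\mathbb{A}^0$ from the algebra structure of $W^{1,q}$ for $q>3$ and continuity/compactness. Your write-up simply fills in the details (variational equation, Liouville's formula, coercivity by compactness) that the paper leaves implicit.
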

\begin{proof}
The proof relies on the dependence of the solutions of \eqref{geo5} with respect to the initial conditions. 
Using that $\eta_{1}^{0} \in W^{2,q}(\mathbb{R}^{2})$ for $q>3$ and Sobolev embedding, we have that $\Lambda\in C^1_b(\mathbb{R}^3)$.
In particular, from standard results (see, for instance, \cite[p.116]{AmannODE}), we have that $\zeta\in C^1(\mathbb{R}\times \mathbb{R}^3)$
and by using the ordinary differential equation satisfied by the derivatives of $\zeta$ in space, we find that
$X^0 \in W^{2,q}({\mathcal{F}})^{3}$ and $\nabla X^0$ is invertible. This yields the result.
\end{proof}

We are now in a position to prove \cref{thm-linear}:
\begin{proof}[Proof of \cref{thm-linear}]
The proof is divided in several steps devoted to the resolution of each system.

\underline{\bf Step 1:} we show here that 
\eqref{NL1.3} admits a unique solution $\eta \in W^{2,4}_{p,q}((0,T) \times \ms)$ 
and that there exists a constant $C$ independent of $T$ such that 
\begin{equation} \label{est:lin-eta}
\|\eta\|_{ W^{2,4}_{p,q}((0,T) ; \ms)} +  \|\partial_{t}\eta\|_{ W^{1,2}_{p,q}((0,T) ; \ms)} 
\leqslant C \left(  \|\widehat\eta_{1}^{0}\|_{B^{2(2-1/p)}_{q,p}(\ms)} +  \|\eta_{2}^{0}\|_{B^{2(1-1/p)}_{q,p}(\ms)}  +  \|h\|_{L^{p}(0,T;L^{q}(\ms))}\right). 
\end{equation}

To prove this, we combine \cite[Theorem 5.1]{DenkSchnaubelt} and \cite[Theorem 4.2]{Weis01}.  For the sake of clarity, we provide brief details about the proof. 
We first consider 
\begin{equation} \label{eq:xs}
\mathcal{X}_{S} := W^{2,q}_{0}(\ms)  \times L^{q}(\ms),
\end{equation}
and the operator $A_{S}$ defined by
\begin{equation} \label{op:As}
\mathcal{D}(A_{S}) = \left(W^{4,q}(\ms) \cap W^{2,q}_{0}(\ms) \right) \times W^{2,q}_{0}(\ms),
\quad
A_{S} = \begin{bmatrix}
0 & \Id \\ -  \Delta^{2} & \Delta
\end{bmatrix}.
\end{equation}
With the above notation, the system \eqref{NL1.3} can be written as 
\begin{equation*}
\frac{d}{dt} \begin{bmatrix}
\eta \\ \partial_{t} \eta
\end{bmatrix}
= A_{S} \begin{bmatrix}
\eta \\ \partial_{t} \eta
\end{bmatrix} + \begin{bmatrix}
0 \\ h
\end{bmatrix}, \qquad 
\begin{bmatrix}
\eta \\ \partial_{t} \eta
\end{bmatrix}(0)  = 
\begin{bmatrix}
\widehat \eta_{1}^{0} \\ \eta_{2}^{0}
\end{bmatrix}.
\end{equation*}
Applying Theorem 5.1 in \cite{DenkSchnaubelt}, we have that $A_{S}$ is $\mr$-sectorial in $\mathcal{X}_{S}$ of angle $\beta_{0} > \pi/2$ (see \cref{sec_back}).
Thus the operator $A_{S}$ has maximal regularity $L^{p}$-regularity in $\mathcal{X}_{S}$ (\cite[Theorem 4.2]{Weis01}
or \cref{thm:max-reg-g}). More precisely, for every $h \in L^{p}(0,T;L^{q}(\mf))$ and for every 
$(\widehat \eta_{1}^{0}, \eta_{2}^{0}) \in (\mathcal{X}_{S}, \mathcal{D}(A_{S}))_{1-1/p,p},$  the system  \eqref{NL1.3} admits a unique strong solution with 
\begin{equation*}
\eta \in L^{p}(0,T;W^{4,q}(\ms)) \cap W^{2,p}(0,T;L^{q}(\ms)). 
\end{equation*}
In order to obtain the estimate \eqref{est:lin-eta} independent of $T$, we proceed as \cite[Proposition 2.2]{HMTT}.

\underline{\bf Step 2:} we show now that
the system \eqref{NL1.1} admits a unique solution $v \in W^{1,2}_{p,q} ((0,T) ; \mf)^{3}$
and that there exists a constant $C > 0$ depending only on the geometry
such that
\begin{multline} \label{est:lin-v}
\left\| v \right\|_{W^{1,2}_{p,q} ((0,T) ; \mf)^{3}}
\leqslant C \Big(
 \|\widehat \eta_{1}^{0}\|_{B^{2(2-1/p)}_{q,p}(\ms)} +  \|\eta_{2}^{0}\|_{B^{2(1-1/p)}_{q,p}(\ms)}  + \left\| v^{0} \right\|_{B^{2(1-1/p)}_{q,p}(\mf)^{3}} 
\\
+  \|h\|_{L^{p}(0,T;L^{q}(\ms))}
+  \| f_{2} \|_{L^{p}(0,T;L^{q}(\mf))^{3}}
\Big).
\end{multline}
To do this, we are going to apply \cite[Theorem 2.3]{DenkHieberPruss07} and for this, we first reduce the problem to the case of homogeneous boundary conditions. 

Using that $\mathcal{F}$ is a smooth domain, there exists an open bounded neighborhood $\widetilde{\mathcal{S}}$ of $\overline{\mathcal{S}}$ in $\mathbb{R}^2$,
$\widetilde\varepsilon>0$ and $\widetilde{\eta}: \widetilde{\mathcal{S}} \to \mathbb{R}$ smooth such that
\begin{equation}\label{geo10-bis}
\left(\widetilde{\mathcal{S}} \times [-\widetilde\varepsilon,\widetilde\varepsilon]\right)
\cap
\partial \mathcal{F}
=\left\{(s,\widetilde{\eta}(s)), \ s\in \widetilde{\mathcal{S}}
\right\}.
\end{equation}
We consider $\chi \in C_{c}^{\infty}(\mathbb{R}^3)$ such that
$$
\supp \chi \subset \widetilde{\mathcal{S}} \times [-\widetilde\varepsilon,\widetilde\varepsilon],
\quad
\chi \equiv 1 \quad \text{in} \ {\mathcal{S}} \times [-\widetilde\varepsilon/2,\widetilde\varepsilon/2].
$$
Then we define
\begin{equation} \label{chi-cut-off}
w(t,y_1,y_2,y_3) := \chi(y_1,y_2,y_3) \partial_{t} \eta(t,y_1,y_2) e_{3} \quad \left((t, y_1,y_2,y_3) \in (0,T) \times \mathbb{R}^3\right)
\end{equation}  
and we set $u=v-w$ so that $u$ is the solution of  
\begin{equation} \label{sys:u}
   \begin{dcases}
       \partial_{t}  u - \mathcal{L} u
       = \widehat{f}_{2}:= f_{2} - \partial_{t} w - \mathcal{L} w  & \mbox{ in } (0,T) \times \mf,\\
        u = 0 &\mbox{ on } (0,T) \times \partial \mf,\\
      u(0,\cdot) = u^{0} :=v^{0} - w(0,\cdot)   & \mbox{ in } \mf,
     \end{dcases}  
\end{equation}
From \cref{lem:zeta-reg} and \eqref{est:lin-eta}, there exists a positive constant $C$ independent of $T$ such that 
$$
\|\widehat{f}_{2}\|_{L^{p}(0,T;L^{q}(\mf))^{3}} \leqslant C \Big( 
\|\widehat\eta_{1}^{0}\|_{B^{2(2-1/p)}_{q,p}(\ms)} +  \|\eta_{2}^{0}\|_{B^{2(1-1/p)}_{q,p}(\ms)}  +  \|h\|_{L^{p}(0,T;L^{q}(\ms))}
+    \| f_{2} \|_{L^{p}(0,T;L^{q}(\mf))^{3}} \Big),
$$
$$
\|u^{0}\|_{B^{2(1-1/p)}_{q,p}(\mf)^{3}}\leq \|v^{0}\|_{B^{2(1-1/p)}_{q,p}(\mf)^{3}}+\|\eta_{2}^{0}\|_{B^{2(1-1/p)}_{q,p}(\ms)}.
$$
Moreover, $u^{0} = 0$ on $\partial \mf.$  To obtain the result it remains to show that for $u^{0}\in B^{2(1-1/p)}_{q,p}(\mf)^{3}$ with 
$u^{0} = 0$ on $\partial \mf$ and for $\widehat{f}_{2}\in L^{p}(0,T;L^{q}(\mf))^{3},$ system \eqref{sys:u} admits a unique strong solution in 
$W^{1,2}_{p,q}((0,T);\mf)^{3}$ with an estimate independent of $T$. In order to do this, we are going to apply \cite[Theorem 2.3]{DenkHieberPruss07}.

Let us denote by $\mathcal{L}_{0}(y, \xi)$ the principal symbol of the operator $\mathcal{L}$ defined by \eqref{defcalL}.
Then we have
$$
\mathcal{L}_{0}(\cdot, \xi)=\frac{\mu}{\rho^{0} \delta^0} (\mathbb{A}^0\xi\cdot \xi) I_3 + 
\frac{\mu+\alpha}{\rho^{0} (\delta^0)^2} (\mathbb{B}^0\xi) \otimes (\mathbb{B}^0\xi).
$$
In particular, $\mathcal{L}_{0}(\cdot, \xi)$ is symmetric and using \eqref{mat-Azv0} and \eqref{visco-relation},
there exists $c^0$ such that
\begin{equation} \label{eq:strong-ellipticity}
\mathcal{L}_{0}(y, \xi)a\cdot a\geq c^0 |a|^2 \quad (y\in \mathcal{F}, a,\xi\in \mathbb{R}^3, \ |\xi|=1).
\end{equation}
This shows condition $\bf{(E)}$ (ellipticity of the interior symbol) of  \cite{DenkHieberPruss07}.

Since we are in the case of the Dirichlet boundary conditions, \eqref{eq:strong-ellipticity} yields the Lopatinskii–Shapiro condition $\bf{(LS)}$, see for instance,
\cite[Proposition 6.2.13 and Remark (i), p.270]{Prussbook}.

Finally, applying again \cref{lem:zeta-reg} and using that $q>3$, we can verify that $\bf{(SD1)}$ and $\bf{(SB1)}$ hold true. 
We can thus apply \cite[Theorem 2.3]{DenkHieberPruss07} and deduce that the system \eqref{sys:u} admits a unique solution $u \in W^{1,2}_{p,q}((0,T) ; \mf)^{3}.$ 
This yields that the system \eqref{NL1.1}  admits a unique solution $v \in W^{1,2}_{p,q}((0,T) ; \mf)^{3}.$  
In order to show that the estimate \eqref{est:lin-v} holds with a constant independent of $T,$ we can proceed as \cite[Proposition 2.2]{HMTT}. 

\underline{\bf Step 3:} next we prove that the system \eqref{NL1.2} admits a unique strong solution $ \vartheta\in W^{1,2}_{p,q} ((0,T) ; \mf)$ 
and that there exists a constant $C > 0$, depending only on the geometry such that
\begin{multline} \label{est:heat}
\| \vartheta \|_{W^{1,2}_{p,q} ((0,T) ; \mf)}  \leqslant C \Big( \| \vartheta^{0} \|_{B^{2(1-1/p)}_{q,p}(\mf)} + \| f_{3} \|_{L^{p}(0,T;L^{q}(\mf))} +  \|g \|_{F^{(1-1/q)/2}_{p,q}(0,T;L^{q}(\partial \mf))} \\ + \| g \|_{L^{p}(0,T;W^{1-1/q,q}(\partial \mf))} \Big). 
\end{multline}
 
As for the previous step, we are going to apply \cite[Theorem 2.3]{DenkHieberPruss07}.
The principal symbol associated with the operator $\vartheta \mapsto - \dfrac{\kappa}{c_{v} \rho^{0} \delta^0} \div \left( \mathbb{A}^0 \nabla \vartheta \right)$
is 
\begin{equation*}
a_{0}(\cdot, \xi) =  \dfrac{\kappa}{c_{v} \rho^{0} \delta^0} \mathbb{A}^0 \xi \cdot \xi
\end{equation*}
and from \cref{lem:zeta-reg} it satisfies $a_{0}(\cdot,\xi)\geq c^1>0$ for $\xi$ such that $|\xi|=1$.
This shows condition $\bf{(E)}$ (ellipticity of the interior symbol) of  \cite{DenkHieberPruss07}.

Due to Theorem 10.4 in \cite[p.145]{Wloka}, the above operator is properly elliptic and following Example 11.6
in \cite[pp.160-161]{Wloka}), we see that the Lopatinskii–Shapiro condition $\bf{(LS)}$ holds true.

Finally, applying again \cref{lem:zeta-reg} and using that $q>3$, we can verify that $\bf{(SD1)}$ and $\bf{(SB1)}$ hold true. 

Thus all the conditions of \cite[Theorem 2.3]{DenkHieberPruss07} are satisfied.  Finally, to obtain the estimate \eqref{est:heat} with constant independent of $T$ we can proceed as  \cite[Proposition 2.2]{HMTT}. 

\underline{\bf Step 4:} 
it only remains to prove the estimate for $\rho.$  It follows from  $v \in W^{1,2}_{p,q}((0,T) ; \mf)^{3}$ and \cref{lem:zeta-reg} that
the system \eqref{NL1.0} admits a unique solution $\rho \in W^{1,p}(0,T;W^{1,q}(\mf))$ and there exists a constant $C$ independent of $T$ such  that 
\begin{equation}
\|\rho\|_{W^{1,p}(0,T;W^{1,q}(\mf))} \leqslant C \left( \|v\|_{W^{1,2}_{p,q}((0,T) ; \mf)^{3}} + \|\widehat{\rho}^{0}\|_{W^{1,q}(\mf)} + \|f_{1}\|_{L^{p}(0,T;W^{1,q}(\mf))} \right).
\end{equation}
Combining {\bf Step 1} to {\bf Step 4}, we deduce the result.
\end{proof}

\subsection{Proof of \cref{th:local-2}} \label{sec:23}
Here, we show the local in time existence of solutions for \eqref{NL0.0}--\eqref{Jacobi}. For this, we notice that 
a solution of  \eqref{NL0.0}--\eqref{H} is a solution of \eqref{NL1.0}--\eqref{NL1.3} such that the source terms satisfy
$$
\vect{f_{1},f_{2}, f_{3}, g, h} = \vect{F_{1},F_{2}, F_{3}, G, H},
$$
where $F_{1}, F_{2}, F_{3}, G$ and $H$ are given by \eqref{defF1}-\eqref{H}. This suggests to prove \cref{th:local-2} by showing that the following mapping admits a 
fixed point:
\begin{equation}\label{Xi}
\Xi_{T,R} : \mathcal{B}_{T,R} \longrightarrow \mathcal{B}_{T,R}, 
	\quad \vect{f_{1}, f_{2}, f_{3}, g, h} \longmapsto \vect{F_{1}, F_{2}, F_{2},  G,  H}, 
\end{equation}
where
 $$
\mathcal{B}_{T,R}=\left\{  \vect{f_{1}, f_{2}, f_{3}, g, h} \in \mathcal{R}_{T,p,q}  \ ; \  \|\vect{f_{1}, f_{2}, f_{3},g,h} \|_{\mathcal{R}_{T,p,q}} \leq R \right\}
$$
(recall that $\mathcal{R}_{T,p,q}$ is defined by \eqref{com0.5}) and where 
$\vect{\rho, v, \vartheta, \eta}$ is the solution of \eqref{NL1.0}---\eqref{NL1.3}  associated with $\vect{f_{1}, f_{2}, f_{3}, g, h}$ and with initial conditions 
$\vect{\rho^{0}, v^{0}, \vartheta^{0},\eta_{1}^{0}, \eta_{2}^{0}} \in \mathcal{I}_{p,q}$. More precisely, we take $R$ large enough so that
\begin{equation}\label{lininisour-est}
\| \vect{\rho^{0}, v^{0}, \vartheta^{0},\eta_{1}^{0}, \eta_{2}^{0}} \|_{\mathcal{I}_{p,q}} \leq R,
\end{equation}
and we assume  \eqref{pqpqpq} \eqref{rho0pos},  \eqref{eta10pos} and \eqref{geo02} so that we can apply 
\cref{thm-linear}:
the system \eqref{NL1.0}--\eqref{NL1.3} admits a unique solution $(\rho, v,  \vartheta,\eta) \in \mathcal{W}_{T,p,q}$
and 
$$
\left\| \vect{ \rho,v, \vartheta,\eta} \right\|_{\mathcal{W}_{T,p,q}}
\leq
C  \Big(  \| \vect{ \rho^{0}, v^{0}, \vartheta^{0}, \eta_{1}^{0}, \eta_{2}^{0}} \|_{\mathcal{I}_{p,q}}  +  \| \vect{f_{1},f_{2}, f_{3}, g, h} \|_{\mathcal{R}_{T,p,q}} \Big).
$$
To prove \cref{th:local-2}, we need to show that, for $T$ small enough, the mapping $\Xi_{T,R}$ is well-defined, that 
$\Xi_{T,R}(\mathcal{B}_{T,R}) \subset \mathcal{B}_{T,R}$ and ${{\Xi}_{T,R}}|_{\mathcal{B}_{T,R}}$ is a strict contraction.

In this proof, we write $C_R$ for any positive constant of the form $C(1+R^N)$ for $N\in \mathbb{N}$, with $C$ a constant that only depends on the geometry and on the physical parameters, and in particular independent of $T$.
In particular the above inequality can be written as
\begin{equation}\label{noli01}
\|\rho\|_{W^{1,p}(0,T;W^{1,q}(\mf))} +  \|v\|_{W^{1,2}_{p,q} ((0,T) ; \mf)^{3}}    
	+ \|\vartheta\|_{W^{1,2}_{p,q}((0,T) ; \mf)}   + \|\eta\|_{W^{2,4}_{p,q}((0,T) ; \ms)}
	\leq C_R.
\end{equation}
We are going to use several times that since $q>3$, $W^{1,q}(\mf)$ is an algebra and $W^{1,q}(\mf)\subset L^{\infty}(\mf)$.
We also have that $W^{1-\frac{1}{q},q}(\partial \mf)\subset L^{\infty}(\partial \mf)$.

We also recall the following elementary inequalities:
\begin{equation}
\| f \|_{L^{p}(0,T)} \leqslant T^{\frac{1}{p} - \frac{1}{r}} \| f \|_{L^{r}(0,T)} \quad (f \in L^{r}(0,T))
\quad \text{if } \ r > p,  \label{est:lplr} 
\end{equation}
\begin{equation}
\| f -f(0)\|_{L^{\infty}(0,T)} \leqslant T^{\frac{1}{p'}} \| f \|_{W^{1,p}(0,T)} \quad (f \in W^{1,p}(0,T)) \quad \text{if } \ \frac{1}{p} + \frac{1}{p'} = 1. 
\label{est:liw1p}
\end{equation}
In particular, we deduce from \eqref{noli01} and the above inequality
\begin{equation} \label{est:d-id}
\| \rho- \rho^{0} \|_{L^{\infty}(0,T;W^{1,q}(\mf))} \leqslant C_R T^{\frac{1}{p'}}, \quad \| \rho \|_{L^{\infty}(0,T;W^{1,q}(\mf))} \leqslant  C_R.
\end{equation}
The above estimate with \eqref{est:lplr} yields
\begin{equation}
\| \rho \|_{L^{p}(0,T;W^{1,q}(\mf))} \leqslant  C_R T^{\frac{1}{p}}. \label{est:lpdma}
\end{equation}
Since $2 < p < \infty$, one has
$B^{2(1-1/p)}_{q,p}(\mf) \hookrightarrow W^{1,q}(\mf)$. Therefore,
using  \eqref{noli01} and \eqref{embama}, we obtain
\begin{equation}
\| v \|_{L^{\infty}(0,T;W^{1,q}(\mf))^{3}}+ \|  \vartheta  \|_{L^{\infty}(0,T;W^{1,q}(\mf))} \leqslant  C_R. \label{est:livt}
\end{equation}

Using  \eqref{est:liw1p} and \eqref{Jacobi}, we deduce successively 
\begin{equation}\label{noli02}
\| X\|_{W^{1,p}(0,T;W^{2,q}(\mf))^{3}} \leqslant  C_R,
\quad 
\| X- X^{0} \|_{L^{\infty}(0,T;W^{2,q}(\mf))^{3}} \leqslant C_R T^{\frac{1}{p'}}, \quad \| X\|_{L^{\infty}(0,T;W^{2,q}(\mf))^{3}} \leqslant  C_R.
\end{equation}
Since $X^0$ is a $C^1$-diffeomorphism, we deduce from the above estimates that $X$ is a $C^1$-diffeomorphism 
for $T$ small enough. Moreover, by combining the above estimates with \cref{lem:zeta-reg} and with \eqref{mat-Azv}, we also deduce
\begin{equation}\label{noli03}
\left\| \mathbb{B}_X \right\|_{W^{1,p}(0,T;W^{1,q}(\mf))^{9}} \leqslant C_R,
\quad
\left\| \mathbb{B}_X-\mathbb{B}^0 \right\|_{L^{\infty}(0,T;W^{1,q}(\mf))^{9}} \leqslant C_R T^{\frac{1}{p'}},
\quad
\left\| \mathbb{B}_X \right\|_{L^{\infty}(0,T;W^{1,q}(\mf))^{9}} \leqslant C_R,
\end{equation}
\begin{equation}\label{noli05}
\left\| \delta_X\right\|_{W^{1,p}(0,T;W^{1,q}(\mf))} \leqslant C_R,
\quad
\left\| \delta_X-\delta^0 \right\|_{L^{\infty}(0,T;W^{1,q}(\mf))} \leqslant C_R T^{\frac{1}{p'}},
\quad 
\left\| \delta_X\right\|_{L^{\infty}(0,T;W^{1,q}(\mf))} \leqslant C_R,
\end{equation}
and in particular, there exists $c_0$ depending on $\eta_1^0$ such that for $T$ small enough,
\begin{equation}\label{noli04}
\delta_X\geq c_0>0.
\end{equation}
We thus deduce
\begin{equation}\label{noli07}
\left\| \frac 1{\delta_X}\right\|_{W^{1,p}(0,T;W^{1,q}(\mf))} \leqslant C_R,
\quad
\left\| \frac 1{\delta_X}- \frac 1{\delta^0} \right\|_{L^{\infty}(0,T;W^{1,q}(\mf))} \leqslant C_R T^{\frac{1}{p'}},
\quad 
\left\| \frac 1{\delta_X}\right\|_{L^{\infty}(0,T;W^{1,q}(\mf))} \leqslant C_R.
\end{equation}
Using the above estimates and \eqref{mat-Azv}, we also obtain
\begin{equation}\label{noli06} 
\left\| \mathbb{A}_{X}\right\|_{W^{1,p}(0,T;W^{1,q}(\mf))^{9}} \leqslant C_R,
\quad
\left\| \mathbb{A}_{X}-\mathbb{A}^0 \right\|_{L^{\infty}(0,T;W^{1,q}(\mf))^{9}} \leqslant C_R T^{\frac{1}{p'}},
\quad
\left\| \mathbb{A}_{X}\right\|_{L^{\infty}(0,T;W^{1,q}(\mf))^{9}} \leqslant C_R.
\end{equation}

We are now in position to estimate the non linear terms in \eqref{defF1}-\eqref{H}.
From the above estimates, we deduce 
\begin{equation}\label{noli99}
\left\| F_1( \rho,  v, \vartheta, \eta)\right\|_{L^{p}(0,T;W^{1,q}(\mf))} 
+
\left\| F_2( \rho,  v, \vartheta, \eta)\right\|_{L^{p}(0,T;L^{q}(\mf))^{3}} 
+
\left\| F_3( \rho,  v, \vartheta, \eta)\right\|_{L^{p}(0,T;L^{q}(\mf))} 
\leqslant C_R T^{\frac{1}{p}}.
\end{equation}
By using the trace theorems, we also have
\begin{equation}\label{noli98}
\left\| G( \rho,  v, \vartheta, \eta)\right\|_{L^{p}(0,T;W^{1-1/q,q}(\partial \mf))} 
+
\left\| H( \rho,  v, \vartheta, \eta)\right\|_{L^{p}(0,T;L^{q}(\ms))} 
\leqslant C_R T^{\frac{1}{p}}.
\end{equation}
It only remains to estimate $G$ given by \eqref{G} in $F^{(1-1/q)/2}_{p,q}(0,T;L^{q}(\partial \mf))$.
First, using \cite[Proposition 6.4]{DenkHieberPruss07}, since  $\vartheta\in W^{1,2}_{p,q}((0,T) ; \mf)$, we have that
$$
\forall i,j, \quad \frac{\partial \vartheta}{\partial y_j}n_i \in F^{(1-1/q)/2}_{p,q}(0,T;L^{q}(\partial \mf)), \quad 
\left\| \frac{\partial \vartheta}{\partial y_j}n_i  \right\|_{F^{(1-1/q)/2}_{p,q}(0,T;L^{q}(\partial \mf))}\leq C_R.
$$
Then we apply the general result \cite[Proposition 2.7]{HMTT} with $s=(1{-}\frac{1}{q})/2$, 
$U_{1} = U_{3} = L^{q}(\partial \mf)$, $U_{2} = W^{1-\frac{1}{q},q}(\partial \mf)$.
Note that since $2 < p < \infty$, we have the condition $s + \frac{1}{p} < 1$. From 
\cite[Proposition 2.7]{HMTT}, we deduce that
for some positive constant $\delta$,
\begin{multline}\label{noli97}
\left\| \left(\mathbb{A}^0-\mathbb{A}_X\right) \nabla \vartheta\cdot n\right\|_{F^{(1-1/q)/2}_{p,q}(0,T;L^{q}(\partial \mf))}
\\
\leq C T^{\delta}    \| \mathbb{A}^0-\mathbb{A}_X \|_{W^{1,p}(0,T;W^{1-\frac{1}{q},q}(\partial \mf))} 
\sum_{i,j} \left\| \frac{\partial \vartheta}{\partial y_j}n_i \right\|_{F^{(1-1/q)/2}_{p,q}(0,T;L^{q}(\partial \mf))}
\leq C_R T^{\delta}.
\end{multline}
Combining \eqref{noli99}, \eqref{noli98}, \eqref{noli97}, we deduce 
\begin{equation}\label{ptfixe1}
\left\| \Xi_{T,R}(f_{1}, f_{2}, f_{3}, g, h)\right\|_{\mathcal{R}_{T,p,q}}  \leq C_R T^{\delta}
\end{equation}
for some power $\delta>0$. Thus for $T$ small enough, $\Xi_{T,R}(\mathcal{B}_{T,R}) \subset \mathcal{B}_{T,R}$.

To show that ${{\Xi}_{T,R}}|_{\mathcal{B}_{T,R}}$ is a strict contraction, we proceed similarly: we consider 
$$
\vect{f_{1}^{(i)}, f_{2}^{(i)}, f_{3}^{(i)}, g^{(i)}, h^{(i)}} \in \mathcal{B}_{T,R}, \quad i=1,2
$$
and we denote by
$\vect{\rho^{(i)}, v^{(i)}, \vartheta^{(i)}, \eta^{(i)}}$ the solution of \eqref{NL1.0}---\eqref{NL1.3}  associated with 
$$
\vect{f_{1}^{(i)}, f_{2}^{(i)}, f_{3}^{(i)}, g^{(i)}, h^{(i)}} \in \mathcal{R}_{T, p, q}
\quad \text{and}
\quad
\vect{\rho^{0}, v^{0}, \vartheta^{0},\eta_{1}^{0}, \eta_{2}^{0}} \in \mathcal{I}_{p,q}.
$$ 
We also write
$$
\vect{f_{1}, f_{2}, f_{3}, g, h}=\vect{f_{1}^{(1)}, f_{2}^{(1)}, f_{3}^{(1)}, g^{(1)}, h^{(1)}}-\vect{f_{1}^{(2)}, f_{2}^{(2)}, f_{3}^{(2)}, g^{(2)}, h^{(2)}},
$$
$$
\vect{\rho, v, \vartheta, \eta}=\vect{\rho^{(1)}, v^{(1)}, \vartheta^{(1)}, \eta^{(1)}}-\vect{\rho^{(2)}, v^{(2)}, \vartheta^{(2)}, \eta^{(2)}}.
$$
We can apply \cref{thm-linear} and deduce that 
\begin{multline}\label{noli10}
\|\rho\|_{W^{1,p}(0,T;W^{1,q}(\mf))} +  \|v\|_{W^{1,2}_{p,q} ((0,T) ; \mf)^{3}}    
	+ \|\vartheta\|_{W^{1,2}_{p,q}((0,T) ; \mf)}   + \|\eta\|_{W^{2,4}_{p,q}((0,T) ; \ms)}
\\	
\leq C \norm{ \vect{f_{1},f_{2}, f_{3}, g, h} }_{\mathcal{R}_{T,p,q}},
\end{multline}
and since the initial conditions of $\vect{\rho, v, \vartheta, \eta}$ are null, we can apply \eqref{est:liw1p}:
\begin{equation} \label{est:d-id-lip}
\| \rho\|_{L^{\infty}(0,T;W^{1,q}(\mf))} \leqslant C T^{\frac{1}{p'}} \norm{ \vect{f_{1},f_{2}, f_{3}, g, h} }_{\mathcal{R}_{T,p,q}}.
\end{equation}
We deduce similarly that
\begin{equation}\label{noli12}
\| X^{(1)}- X^{(2)} \|_{L^{\infty}(0,T;W^{2,q}(\mf))} \leqslant C T^{\frac{1}{p'}}\| \vect{f_{1},f_{2}, f_{3}, g, h} \|_{\mathcal{R}_{T,p,q}},
\end{equation}
and we obtain similar estimates for 
$\mathbb{B}_{X^{(1)}}-\mathbb{B}_{X^{(2)}}$, $\mathbb{A}_{X^{(1)}}-\mathbb{A}_{X^{(2)}}$, $\delta_{X^{(1)}}-\delta_{X^{(2)}}$.
Proceeding as above, we deduce that $F_{1}, F_{2}, F_{3}, G$ and $H$ given by \eqref{defF1}-\eqref{H} satisfy
\begin{multline}\label{noli89}
\left\| F_1( \rho^{(1)},  v^{(1)}, \vartheta^{(1)}, \eta^{(1)})-F_1( \rho^{(2)},  v^{(2)}, \vartheta^{(2)}, \eta^{(2)})\right\|_{L^{p}(0,T;W^{1,q}(\mf))} 
\\
+\left\| F_2( \rho^{(1)},  v^{(1)}, \vartheta^{(1)}, \eta^{(1)})-F_2( \rho^{(2)},  v^{(2)}, \vartheta^{(2)}, \eta^{(2)})\right\|_{L^{p}(0,T;L^{q}(\mf))^{3}} 
\\
+\left\| F_3( \rho^{(1)},  v^{(1)}, \vartheta^{(1)}, \eta^{(1)})-F_3( \rho^{(2)},  v^{(2)}, \vartheta^{(2)}, \eta^{(2)})\right\|_{L^{p}(0,T;L^{q}(\mf))} 
\\
+\left\| G( \rho^{(1)},  v^{(1)}, \vartheta^{(1)}, \eta^{(1)})-G( \rho^{(2)},  v^{(2)}, \vartheta^{(2)}, \eta^{(2)})\right\|_{L^{p}(0,T;W^{1-1/q,q}(\partial \mf))\cap F^{(1-1/q)/2}_{p,q}(0,T;L^{q}(\partial \mf))} 
\\
+\left\| H( \rho^{(1)},  v^{(1)}, \vartheta^{(1)}, \eta^{(1)})-H( \rho^{(2)},  v^{(2)}, \vartheta^{(2)}, \eta^{(2)})\right\|_{L^{p}(0,T;L^{q}(\ms))} 
\leqslant C_R T^{\delta}\| \vect{f_{1},f_{2}, f_{3}, g, h} \|_{\mathcal{R}_{T,p,q}}
\end{multline}
for some positive constant $\delta$. Thus taking $T$ small enough, we deduce that
${{\Xi}_{T,R}}|_{\mathcal{B}_{T,R}}$ is a strict contraction and this ends the proof of the theorem.
\qed

\section{Global in time existence} \label{sec_glo}
In this section we prove \cref{th:global-main}.
\subsection{Change of variables and Linearization}\label{sec_chgvar2}
As in the first part of this work, in order to show global existence in time we use a change of variables to write 
the system \eqref{eq:main01}--\eqref{eq:force-fsi} in the fixed spatial domain 
$\mathcal{F}$. We consider the same transformation as in \cref{sec:cov-local}, that is 
$X$ is defined by \eqref{X-def}. Note that \eqref{mainsmall} for $R$ small enough yields condition \eqref{geo02}.
However, we modify \eqref{lpq01} since we 
linearize here the system around the constant steady state $\vect{\orh, 0, \ot, 0},$ with $\orh, \ot\in \mathbb{R}_+^*$:
\begin{equation}\label{lpq01-g}
  \rho(t,y)  = \widetilde \rho(t,X(t,y))- \orh , \qquad  v (t,y)   = \widetilde  v(t,X(t,y)), \quad
  \vartheta(t,y)  = \widetilde  \vartheta(t,X(t,y))- \ot,
\end{equation}
for $(t,y) \in (0,T) \times \mf.$  In particular,
\begin{equation}
\label{lpq02-g} 
\widetilde \rho(t,x)   =  \orh+ \rho(t,Y(t,x)), \quad  \widetilde  v(t,x)  = v(t,Y(t,x)),\quad
\widetilde  \vartheta(t,x)   = \ot+  \vartheta(t,Y(t,x)),
\end{equation}
for $(t,x) \in (0,T) \times \mf(\eta(t))$. 

This change of variables transforms \eqref{eq:main01}--\eqref{eq:force-fsi} into the following system for
$\vect{ \rho, v, \vartheta, \eta}$:
\begin{equation} \label{eq:global-01a} 
\left\{
\begin{array}{ll}
\partial_{t} \rho + \orh \div v = F_{1}( \rho, v, \vartheta,\eta)  & \mbox{ in } (0,\infty) \times \mf, \\
\partial_{t} v -  \dfrac{1}{\orh} \div \overline{\mathbb{T}}(\rho, v, \vartheta) = F_{2}( \rho, v, \vartheta,\eta)  & \mbox{ in } (0,\infty) \times \mf, \\
\partial_{t} \vartheta - \overline{\kappa} \Delta \vartheta = F_{3}( \rho, v, \vartheta,\eta) & \mbox{ in } (0,\infty) \times \mf, \\
\partial_{tt} \eta + \Delta_{s}^{2} \eta - \Delta_{s} \partial_{t} \eta =  - \overline{\mathbb{T}}(\rho, v, \vartheta)e_{3} \cdot e_{3} 
+  H( \rho, v, \vartheta,\eta) & \mbox{ in } (0,\infty) \times \ms,
\end{array}
\right.
\end{equation}
\begin{equation} \label{eq:global-01b} 
\left\{
\begin{array}{ll}
v = 0 & \mbox{ on } (0,\infty) \times \Gamma_{F}, \\
v = \partial_{t} \eta e_{3} & \mbox{ on } (0,\infty) \times \Gamma_{S}, \\
\dfrac{\partial \vartheta}{\partial n} = G( \rho, v, \vartheta,\eta)  & \mbox{ on } (0,\infty) \times \partial \mf,\\
\eta = \nabla_{s} \eta \cdot n_{S} = 0 & \mbox{ on } (0,\infty) \times \partial \ms,
\end{array}
\right.
\end{equation}
\begin{equation} \label{eq:global-01c} 
\left\{
\begin{array}{ll}
\eta(0,\cdot) = \eta_{1}^{0}, \quad \partial \eta(0,\cdot) = \eta_{2}^{0} & \mbox{ in } \ms, \\
\rho(0,\cdot) = \rho^{0}, \quad v(0,\cdot) = v^{0}, \qquad \vartheta(0,\cdot) = \vartheta^{0} & \mbox{ in } \mf,  
\end{array}
\right.
\end{equation}
where 
\begin{equation}
\overline{\mathbb{T}}(\rho, v, \vartheta) = 
2\mu\mathbb{D} v+ \left(\alpha \div v -R_0 \ot \rho -R_0 \orh \vartheta\right) I_3,
\end{equation}
\begin{equation}\label{kappabar}
\overline{\kappa}=\frac{\kappa}{c_{v}\orh}
\end{equation}
\begin{equation}\label{ci-G}
\rho^{0}  = \widetilde \rho^{0}\circ X^0 - \orh, \qquad  v^{0}  =  \widetilde v^{0}\circ X^0, \qquad \vartheta^{0} = \widetilde \vartheta^{0}\circ X^0 - \ot. 
\end{equation}
The nonlinear  terms in \eqref{eq:global-01a}--\eqref{eq:global-01c} can be written as 
\begin{equation} \label{F1-g}
{F}_{1}( \rho, v, \vartheta,\eta)
= -  \rho \div  v  - ( \rho+\overline\rho) \Big(\frac{1}{\delta_{X}} \mathbb{B}_{X} - I_{3} \Big) : \nabla v, 
\end{equation}
\begin{multline} \label{F2-G}
F_{2} (\rho,  v, \vartheta, \eta) = \frac{1}{\orh} \bigg[
 -\orh (\delta_X - 1) \partial_{t} v
 -\rho \delta_X  \partial_{t} v
 +\mu \div \left(\nabla v (\mathbb{A}_X-I_3)\right)
\\
+(\mu+\alpha) \div \left(\frac{1}{\delta_X}\mathbb{B}_X(\nabla v)^{\top}\mathbb{B}_X-(\nabla v)^{\top}\right)
+R_0\mathbb{B}_X \nabla (\rho \vartheta)
+R_0(\mathbb{B}_X-I_3) (\orh \nabla \vartheta +\ot \nabla \rho)\bigg]
\end{multline}
\begin{multline} \label{F3-G}
{F}_{3}( \rho,  v, \vartheta, \eta) = \frac{1}{c_v \orh}\left[
-c_v  \delta_{X} \rho \partial_{t} \vartheta
-c_v \orh(\delta_{X} - 1) \partial_{t} \vartheta 
-R_0 \Big(\rho\vartheta + \orh \vartheta + \ot \rho \Big)  \Big( \mathbb{B}_{X} : \nabla v \Big)  
\right.  \\ \left.
 + \kappa \div \Big( (\mathbb{A}_{X} - I_{3}) \nabla\vartheta \Big) 
 +  \frac{\alpha}{\delta_{X}} \left( \mathbb{B}_{X} : \nabla v\right)^{2}   
 + \frac{2\mu}{\delta_{X}} \left|\nabla v \mathbb{B}_{X}^{\top}  +  \mathbb{B}_{X}\nabla v^{\top} \right|^2
 \right],
\end{multline}
\begin{equation} \label{G-G}
{G}( \rho,  v, \vartheta, \eta)  = \left(  I_{3} - \mathbb{A}_{X} \right) \nabla \vartheta\cdot n,
\end{equation}
\begin{multline} \label{H-G}
{H}( \rho,  v, \vartheta, \eta)  = 
 - \mu\left[\frac{1}{\delta_{X}}\left(  \nabla v \mathbb{B}_{X}^{\top} +  \mathbb{B}_{X} \nabla v^{\top} \right) 
\begin{bmatrix}  -\nabla_{s} \eta \\ 1 \end{bmatrix}  - 2\mu \mathbb{D}(v) e_{3}  \right] \cdot e_{3} 
\\
-\alpha  \Big(\frac{1}{\delta_{X}} \mathbb{B}_{X} - I_{3} \Big) : \nabla v +   R_{0} \rho\vartheta,
\end{multline}
where  $\mathbb{A}_{X},$ $\mathbb{B}_{X}$ and $\delta_{X}$ are defined in \eqref{mat-Azv}. 
The hypotheses \eqref{cc1}--\eqref{cc4} on the initial conditions are transformed into 
\eqref{eta10pos}--\eqref{ci03} and  
\begin{equation}
\rho^{0} \in W^{1,q}(\mf), \quad \min_{\overline{\mf}} \rho^{0}+\orh  > 0, \label{rho0pos-G}.
\end{equation}

Using the above change of variables, \cref{th:global-main} can be reformulated as 
\begin{Theorem} \label{th:global-new}
Assume $(p,q)$ satisfies \eqref{pqpqpq}
and assume that $\orh$ and $\ot$ are two given positive constants such that \eqref{pressure-new} holds. 
Then there exist $\beta>0$ and $R > 0$ such that, for any $\vect{\rho^{0}, v^{0}, \vartheta^{0}, \eta_{1}^{0}, \eta_{2}^{0}}$ satisfying 
\eqref{geo02}, \eqref{rho0pos-G}, \eqref{eta10pos}--\eqref{ci03} and 
\begin{equation*}
\norm{\rho^{0}}_{W^{1,q}(\mf)} + \norm{v^{0}}_{B^{2(1-1/p)}_{q,p}(\mf)^{3}} +  \norm{\vartheta^{0}}_{B^{2(1-1/p)}_{q,p}(\mf)} \\
+  \norm{\eta_{1}^{0}}_{B^{2(2-1/p)}_{q,p}(\ms)} + 
 \norm{\eta_{2}^{0}}_{B^{2(1-1/p)}_{q,p}(\ms)} \leqslant R,
\end{equation*}
the system \eqref{eq:global-01a}--\eqref{H-G} admits a unique strong solution $\vect{\rho, v, \vartheta, \eta}$ in the class of functions satisfying 
\begin{gather}
\rho \in C^0_{b}([0,\infty);W^{1,q}(\mf)), \quad \nabla \rho \in {W^{1,p}_{\beta}(0,\infty;L^{q}(\mf))}, \quad \partial_t \rho \in {L^{p}_{\beta}(0,\infty;W^{1,q}(\mf))},  
\label{reg1}\\
\vartheta \in C_{b}^{0}([0,\infty);B^{2(1-1/p)}_{q,p}(\mf)), \quad \nabla \vartheta \in L^{p}_{\beta}(0,\infty ; W^{1,q}(\mf)), \quad  
	\partial_t \vartheta \in {L^{p}_{\beta}(0,\infty ; L^q(\mf))},  
\\
v \in {W^{1,2}_{p,q,\beta} ((0,\infty) ; \mf)^{3}},  \quad  \partial_{t}\eta \in W^{1,2}_{p,q,\beta}((0,\infty) ; \ms),
\\
\eta \in C^{0}_{b}([0,\infty);B^{2(2-1/p)}_{q,p}(\ms)), \quad \eta \in L^{p}_{\beta}(0,\infty;W^{4,q}(\ms)) + L^{\infty}(0,\infty;W^{4,q}(\ms)).
\label{reg4}
\end{gather}
Moreover, 
$$
\min_{[0,\infty) \times \overline{\mf}} \rho +\orh > 0, \quad \Gamma_{0} \cap \Gamma_S(\eta(t)) = \emptyset \quad (t\in [0,\infty)),
$$
and for all $t \in [0,\infty)$, $X(t,\cdot): \mf \to \mf(\eta(t))$ is a $C^{1}$-diffeomorphism.
\end{Theorem}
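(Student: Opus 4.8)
The plan is to deduce \cref{th:global-new} from the maximal $L^p$--$L^q$ regularity of the linearization \eqref{eq:global-01a}--\eqref{eq:global-01c} on the half-line with an exponential weight, combined with the Banach fixed point theorem. First I would replace $F_1,F_2,F_3,G,H$ in \eqref{eq:global-01a}--\eqref{eq:global-01c} by given source terms $f_1,f_2,f_3,g,h$ lying in the natural $\beta$-weighted space of source terms on $(0,\infty)$ (the counterpart of $\mathcal R_{T,p,q}$: $f_1\in L^p_\beta(0,\infty;W^{1,q}(\mf))$, $f_2,f_3\in L^p_\beta(0,\infty;L^q(\mf))$, $g$ in the corresponding weighted anisotropic trace space on $\partial\mf$ with $g(0)=0$ when $\tfrac1p+\tfrac1{2q}<\tfrac12$, $h\in L^p_\beta(0,\infty;L^q(\ms))$) and prove that, for data satisfying \eqref{eta10pos}--\eqref{ci03} and \eqref{rho0pos-G}, the resulting linear system has a unique solution in the class \eqref{reg1}--\eqref{reg4}, with norm bounded by $C\big(\text{(data norm)}+\text{(source norm)}\big)$ for a constant $C$ independent of the data. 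This is the content of \cref{sec:op-fsi}--\cref{sec:max-lin-g}, and it is the counterpart of \cref{thm-linear} for the infinite-time horizon.

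The core of this linear analysis is the \emph{fluid-structure} operator $\mathcal A$ obtained from \eqref{eq:global-01a}--\eqref{eq:global-01b} after writing the plate equation as a first-order system in $(\eta,\partial_t\eta)$ and homogenising the Neumann datum $G$ by a lifting, acting on a state space of the form $W^{1,q}(\mf)\times L^q(\mf)^3\times L^q(\mf)\times W^{2,q}_0(\ms)\times L^q(\ms)$ for $(\rho,v,\vartheta,\eta,\partial_t\eta)$. In \cref{sec:op-fsi} I would show that $\mathcal A$ is $\mathcal R$-sectorial of angle $>\pi/2$: the parabolic block in $(v,\vartheta)$ coupled to the damped plate is handled by the localisation techniques of \cite{DenkHieberPruss07} together with \cite{DenkSchnaubelt}, the interface coupling on $\Gamma_S$ being of lower order, while the density equation $\partial_t\rho=-\orh\div v$ is incorporated as in \cite{HMTT}, in particular through the relatively bounded perturbation of the shifted operator allowed by \cref{pr:perturb}. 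In \cref{sec:exp-fsi} I would prove that, after splitting off the finite-dimensional kernel of $\mathcal A$ (spanned by the constant-in-time steady states $(\rho_*,0,\vartheta_*,\eta_*)$ near $(\orh,0,\ot,0)$, which is precisely what produces the non-decay of $\rho$ and $\vartheta$ and the $L^\infty(0,\infty;W^{4,q}(\ms))$ summand for $\eta$ in \eqref{reg1}--\eqref{reg4}), the remaining part of the semigroup generated by $\mathcal A$ has negative exponential type. Granting these two facts, \cref{thm:max-reg-g} applied to $\mathcal A+\beta\,\Id$ for $\beta>0$ small, together with the reconstruction of $\rho$ from $v$ as in Step~4 of the proof of \cref{thm-linear} and the absorption of the boundary term $g$ as in \cref{sec:local-max-lp}, yields the required weighted maximal regularity.

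With the linear result in hand, \cref{th:global-new} follows by a fixed point for the map $\Xi$ sending $\vect{f_1,f_2,f_3,g,h}$ to $\vect{F_1,F_2,F_3,G,H}$ evaluated at the associated linear solution $\vect{\rho,v,\vartheta,\eta}$, on the ball $\mathcal B_\delta$ of radius $\delta$ (to be chosen of size $\sim R$) of the weighted source space. The key structural observation is that every term of \eqref{F1-g}--\eqref{H-G} is either genuinely quadratic in $\vect{\rho,v,\vartheta,\eta}$ or carries one of the geometric factors $\delta_X-1$, $\tfrac1{\delta_X}\mathbb B_X-I_3$, $\mathbb A_X-I_3$, $\mathbb B_X-I_3$. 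Writing $X(t,\cdot)=X^0+\int_0^t v(r,\cdot)\,\rd r$ as in \eqref{Jacobi}, these factors are controlled in $L^\infty(0,\infty;W^{1,q}(\mf))$ by $\|X^0-\Id\|_{W^{2,q}(\mf)}+\sup_{t\ge 0}\|\int_0^t\nabla v\,\rd r\|_{W^{1,q}(\mf)}$; the first summand is $O(R)$ because it is controlled by $\|\eta_1^0\|_{B^{2(2-1/p)}_{q,p}(\ms)}$ through the construction of $X^0$ in \cref{sec:cov-local}, and the second is bounded by $\|v\|_{W^{1,2}_{p,q,\beta}((0,\infty);\mf)}$ by H\"older with the factor $e^{-\beta r}\in L^{p'}(0,\infty)$, \emph{uniformly in $t$}: this is the point at which the exponential weight replaces the smallness of $T$ used in \cref{sec:23}. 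Since $q>3$, $W^{1,q}(\mf)$ is an algebra, $W^{1,q}(\mf)\hookrightarrow L^\infty(\mf)$ and $W^{1-1/q,q}(\partial\mf)\hookrightarrow L^\infty(\partial\mf)$, so one obtains $\|\Xi\vect{f_1,\dots,h}\|\le C(R+\delta)\delta$ on $\mathcal B_\delta$, and a parallel estimate for differences (as in \eqref{noli89}, with $T$-smallness replaced by $(R+\delta)$-smallness) gives a Lipschitz constant $C(R+\delta)$. Taking $\delta=2C_0R$ with $C_0$ the linear-estimate constant and then $R$ small makes $\Xi$ a contraction of $\mathcal B_\delta$ into itself, hence a unique fixed point, which is the unique strong solution of \eqref{eq:global-01a}--\eqref{H-G} in the class \eqref{reg1}--\eqref{reg4}.

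It remains to recover the geometric statements. From $\rho\in C^0_b([0,\infty);W^{1,q}(\mf))\hookrightarrow C^0_b([0,\infty);L^\infty(\mf))$ with norm $O(R)$ we get $\rho+\orh>0$ on $[0,\infty)\times\overline{\mf}$ for $R$ small; from $\sup_{t\ge0}\|X(t,\cdot)-X^0\|_{W^{2,q}(\mf)}\le C\|v\|_{W^{1,2}_{p,q,\beta}((0,\infty);\mf)}=O(R)$ and the fact that $X^0$ is a $C^1$-diffeomorphism we get that $X(t,\cdot):\mf\to\mf(\eta(t))$ is a $C^1$-diffeomorphism for every $t\ge0$; and from $\eta\in C^0_b([0,\infty);B^{2(2-1/p)}_{q,p}(\ms))\hookrightarrow C^0_b([0,\infty);C^0(\overline{\ms}))$ with norm $O(R)$ together with \eqref{geo00-bis}--\eqref{geo01-bis} we keep $\eta_-<\eta(t)<\eta_+$, hence $\Gamma_0\cap\Gamma_S(\eta(t))=\emptyset$, for all $t\ge0$. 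Undoing the change of variables \eqref{lpq01-g} then gives \cref{th:global-main}. I expect the decisive difficulty to be the exponential stability of $\mathcal A$ in the $L^q$ framework: the classical Hilbert-space energy--dissipation estimate (using $R_0>0$, $\mu>0$, $\kappa>0$ and the plate damping $-\Delta_s\partial_t\eta$) locates the spectrum and, via a unique continuation argument, excludes purely imaginary eigenvalues off the kernel, but transferring this to a genuine decay rate of the semigroup on $L^q$ and identifying precisely the non-decaying finite-dimensional part requires the spectral and resolvent machinery of \cref{sec_back}.
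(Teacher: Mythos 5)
Your overall strategy coincides with the paper's: establish $\mathcal R$-sectoriality of the coupled fluid--structure operator (\cref{th:R-sec}), prove exponential stability on a suitable invariant subspace (\cref{th:exp-stab}), deduce weighted maximal $L^p$--$L^q$ regularity on $(0,\infty)$, and close with a Banach fixed point whose smallness comes from $R$ rather than from $T$. Your treatment of the geometric factors via $X=X^0+\int_0^t v$ and H\"older against $e^{-\beta r}$ is exactly \eqref{10:54}, and your identification of the non-decaying constant modes as the source of the $L^\infty(0,\infty;W^{4,q}(\ms))$ summand in \eqref{reg4} is correct.

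There is, however, a genuine gap in the nonlinear iteration: your source space does not reproduce itself under $\Xi$. You take $h\in L^p_\beta(0,\infty;L^q(\ms))$, but the plate forcing $H$ in \eqref{H-G} contains the quadratic term $R_0\rho\vartheta$, and both $\rho$ and $\vartheta$ have spatially constant, non-decaying components $\widehat\rho,\widehat\vartheta\in L^\infty(0,\infty)$ (precisely the modes you split off at the linear level). Hence $H$ contains $R_0\widehat\rho\,\widehat\vartheta$, which is merely bounded in time and does \emph{not} belong to $L^p_\beta(0,\infty;L^q(\ms))$; your map $\Xi$ therefore does not send the ball $\mathcal B_\delta$ into itself. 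The paper resolves this by enlarging the source space: $h=\widetilde h+\widehat h$ with $\widehat h\in L^\infty(0,\infty)$, $\partial_t\widehat h\in L^p_\beta(0,\infty)$ (see \eqref{com0.5-deuze}), splitting correspondingly $H=\widetilde H+\widehat H$ with $\widehat H=R_0\widehat\rho\,\widehat\vartheta$ in \eqref{tilde-H}--\eqref{hat-H}, and handling the resulting non-decaying plate forcing in the linear theory by differentiating the corresponding subsystem \eqref{eq:linear-FS-3a}--\eqref{eq:linear-FS-3c} in time and invoking \cref{lemdeb}. This extra structure is not optional bookkeeping; without it the contraction argument fails. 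A second, smaller inaccuracy: you impose $g(0,\cdot)=0$ when $\tfrac1p+\tfrac1{2q}<\tfrac12$, but $G|_{t=0}=(I_3-\mathbb A^0)\nabla\vartheta^0\cdot n=\tfrac{\partial\vartheta^0}{\partial n}$ by \eqref{ci03}, which is nonzero in general since $\eta_1^0\neq0$; the correct compatibility condition in the source space is $g(0,\cdot)=\tfrac{\partial\vartheta^0}{\partial n}$, and the paper absorbs it through the auxiliary heat problem \eqref{heatgamma}. Finally, your description of the stability step as ``splitting off the kernel'' should be made precise as restricting to the codimension-two invariant subspace $\mx_{\textnormal m}$ of \eqref{eq:Xm} determined by the mass and heat conservation constraints; on the full space $0$ lies in the spectrum and no decay holds.
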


The proof of \cref{th:global-new} relies on the Banach fixed point theorem and on the maximal $L^{p}-L^{q}$ regularity of a linearized system over the time interval $(0,\infty).$
In order to introduce the linearized system associated with \eqref{eq:global-01a}--\eqref{H-G}, 
we introduce the following operator 
$\mathcal{T} : W^{2,q}_{0}(\ms) \to W^{2,q}(\partial \mf)^{3}$ defined by 
\begin{equation}
(\mathcal{T}\eta)(y) = \begin{dcases}
\eta(s) e_{3} & \mbox{ if } y = (s, 0) \in \Gamma_{S}, \\ 0 & \mbox{ if } y \in \Gamma_{0}.
\end{dcases}
\end{equation}
We also write $\eta_{1} = \eta$ and $ \eta_{2} = \partial_{t} \eta$ and we consider the following system where we have replaced in \eqref{eq:global-01a}--\eqref{ci-G},
the nonlinearities $F_1,F_2,F_3,G,H$ by given source terms $f_1,f_2,f_3,g,h$:

\begin{equation} \label{eq:linear-FSa} 
\left\{
\begin{array}{ll}
\partial_{t} \rho + \orh \div v = f_1  & \mbox{ in } (0,\infty) \times \mf, \\
\partial_{t} v -  \dfrac{1}{\orh} \div \overline{\mathbb{T}}(\rho, v, \vartheta) = f_2  & \mbox{ in } (0,\infty) \times \mf, \\
\partial_{t} \vartheta - \overline{\kappa} \Delta \vartheta = f_{3} & \mbox{ in } (0,\infty) \times \mf, \\
\partial_t \eta_1-\eta_2=0& \mbox{ in } (0,\infty) \times \mf, \\
\partial_{t} \eta_2 + \Delta_{s}^{2} \eta_1 - \Delta_{s} \eta_2 =  - \overline{\mathbb{T}}(\rho, v, \vartheta)e_{3} \cdot e_{3} 
+  h & \mbox{ in } (0,\infty) \times \ms,
\end{array}
\right.
\end{equation}
\begin{equation} \label{eq:linear-FSb} 
\left\{
\begin{array}{ll}
v = \mathcal{T}\eta_2 & \mbox{ on } (0,\infty) \times \partial \mf, \\
\dfrac{\partial \vartheta}{\partial n} = g & \mbox{ on } (0,\infty) \times \partial\mf,\\
\eta_1 = \nabla_{s} \eta_1 \cdot n_{S} = 0 & \mbox{ on } (0,\infty) \times \partial \ms,
\end{array}
\right.
\end{equation}
\begin{equation} \label{eq:linear-FSc} 
\left\{
\begin{array}{ll}
\eta_1(0,\cdot) = \eta_{1}^{0}, \quad \eta_2(0,\cdot) = \eta_{2}^{0} & \mbox{ in } \ms, \\
\rho(0,\cdot) = \rho^{0}, \quad v(0,\cdot) = v^{0}, \quad \vartheta(0,\cdot) = \vartheta^{0} & \mbox{ in } \mf,  
\end{array}
\right.
\end{equation}

Our aim is to show that the linearized operator associated to the above linear system is $\mr$-sectorial in a suitable function space. 

\subsection{The fluid-structure operator} \label{sec:op-fsi}
Here we introduce the operator associated to the linear system \eqref{eq:linear-FSa}--\eqref{eq:linear-FSc}.  To this aim, we first define 
\begin{equation}\label{Au}
\mathcal{D}(A_{\text v}) = \Big\{ v \in W^{2,q}(\mf)^{3} \mid v =0 \mbox{ on } \partial \mf \Big\},
\quad
A_{\text v} = \frac{\mu}{\overline\rho}\Delta + \frac{\alpha + \mu}{\overline\rho} \nabla \div,
\end{equation}
and
\begin{equation}\label{Atheta}
\mathcal{D}(A_{\vartheta}) = \left\{ \vartheta \in W^{2,q}(\mf) \mid \frac{\partial \vartheta}{\partial n} = 0\mbox{ on } \partial \mf \right\},
\quad A_{\vartheta} = \overline \kappa\Delta.
\end{equation}
From \cite[Theorem 1.4]{ST04}, $A_{\text v}$ is an isomorphism from $\mathcal{D}(A_{\text v})$ onto $L^{q}(\mf)^{3}$ for any $q\in (1,\infty)$.
Using trace properties, this allows us to introduce the operator
\begin{equation} \label{eq:Dv}
D_{\text v} \in \mathcal{L}(W^{2,q}_{0}(\ms); W^{2,q}(\mf)^{3}), 
\end{equation}
where $w=D_{\text{v}} g$ is the solution to the system 
\begin{equation} \label{eq:lift0}
\begin{dcases}
- \frac{\mu}{\orh} \Delta w  - \frac{\alpha + \mu}{\orh} \nabla(\div w)  = 0 & \mbox{ in }  \mf, \\
w = \mathcal{T}g  & \mbox{ on } \partial \mf.
\end{dcases}
\end{equation}
By a standard transposition method, the operator $D_{\text v}$ can be extended as a bounded operator from $L^{q}(\ms)$ to $L^{q}(\mf)^{3}.$ 

Using the above definitions and recalling the definitions  \eqref{eq:xs}, \eqref{op:As} of $A_{S}$ and $\mx_{S}$, we can write
\eqref{eq:linear-FSa}--\eqref{eq:linear-FSc} as follows (in the case $g=0$): 
\begin{equation}
\frac{d}{dt} \begin{bmatrix}
\rho \\ v \\ \vartheta \\ \eta_{1} \\ \eta_{2}
\end{bmatrix} 
= 
\mathcal{A}_{FS} \begin{bmatrix}
\rho \\ v \\ \vartheta \\ \eta_{1} \\ \eta_{2}
\end{bmatrix}
+\begin{bmatrix}
f_1 \\ f_2 \\ f_3 \\ 0 \\ h
\end{bmatrix}, 
\qquad 
\begin{bmatrix}
\rho \\ v \\ \vartheta \\ \eta_{1} \\ \eta_{2}
\end{bmatrix} (0) = \begin{bmatrix}
\rho^{0} \\ v^{0} \\ \vartheta^{0} \\ \eta_{1}^{0} \\ \eta_{2}^{0}
\end{bmatrix},
\end{equation}
where $\mathcal{A}_{FS} : \mathcal{D}(\mathcal{A}_{FS}) \to \mx$ is defined by 
\begin{equation} \label{eq:mx}
\mx = W^{1,q}(\mf) \times L^{q}(\mf)^{3} \times L^{q}(\mf) \times W^{2,q}_{0}(\ms) \times L^{q}(\ms),
\end{equation}
\begin{equation}
\mathcal{D}(\mathcal{A}_{FS}) = \Big\{[\rho, v,  \vartheta, \eta_{1}, \eta_{2}]^{\top} \in W^{1,q}(\mf) \times W^{2,q}(\mf)^{3} \times \mathcal{D}(A_{\vartheta}) \times \mathcal{D}(A_{S}) \mid v - D_{\textnormal{v}} \eta_{2} \in \mathcal{D}(A_{\textnormal{v}}) \Big\},
\end{equation}
and 
\begin{equation*}
\mathcal{A}_{FS} = \mathcal{A}^{0}_{FS} + \mathcal{B}_{FS},
\end{equation*}
with 
\begin{equation}
\mathcal{A}_{FS}^0 = \begin{bmatrix}
\rho \\ v \\ \vartheta \\ \eta_{1} \\ \eta_{2}
\end{bmatrix} = 
 \begin{bmatrix}
- \overline\rho \div v\\
A_{\textnormal{v}}  (v-D_{\textnormal{v}} \eta_2)\\
A_{\vartheta} \vartheta\\ 
\eta_2\\
-\Delta_{s}^{2} \eta_1+\Delta_{s}\eta_2
\end{bmatrix}
\quad
\text{and}
\quad
\mathcal{B}_{FS} 
\begin{bmatrix}
\rho \\ v \\ \vartheta \\ \eta_{1} \\ \eta_{2}
\end{bmatrix} = 
\begin{bmatrix}
0 \\ - \dfrac{R_0\ot}{\orh} \nabla \rho - R_0 \nabla \vartheta  \\ 0 \\ 0 \\ - \overline{\mathbb{T}}(\rho, v, \vartheta)e_{3} \cdot e_{3}
\end{bmatrix}.
\end{equation}

We recall that the definition of a $\mr$-sectorial operator is given in \cref{defmr}.
We now prove the following theorem : 
\begin{Theorem} \label{th:R-sec}
Let $1 < q< \infty.$ Then there exists $\gamma > 0$ such that $\mathcal{A}_{FS} - \gamma$ is an $\mr$-sectorial operator in $\mx$ of angle $ \beta > \pi/2.$
\end{Theorem}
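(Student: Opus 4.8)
The strategy is to first establish $\mr$-sectoriality of the ``principal'' part $\mathcal{A}_{FS}^0$ and then treat $\mathcal{B}_{FS}$ as a lower-order perturbation via \cref{pr:perturb}. For the principal part, the plan is to observe that $\mathcal{A}_{FS}^0$ is, up to the coupling through $D_{\textnormal{v}}$, block-triangular: the temperature component $A_{\vartheta}=\overline\kappa\Delta$ with Neumann conditions is decoupled from $(\rho,v,\eta_1,\eta_2)$, and it is a classical fact (e.g.\ from \cite{DenkHieberPruss07} applied as in Step 3 of the proof of \cref{thm-linear}) that $A_{\vartheta}-\gamma_1$ is $\mr$-sectorial of angle $>\pi/2$ in $L^q(\mf)$ for suitable $\gamma_1$. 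So it remains to handle the genuinely coupled compressible-fluid/plate block acting on $W^{1,q}(\mf)\times L^q(\mf)^3\times W^{2,q}_0(\ms)\times L^q(\ms)$.

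For that block I would work with the shifted resolvent problem $(\lambda-\mathcal{A}_{FS}^0)[\rho,v,\eta_1,\eta_2]^\top=[g_\rho,g_v,g_{\eta_1},g_{\eta_2}]^\top$ for $\lambda\in\Sigma_\beta$ with $|\lambda|$ large (small-$|\lambda|$ sectoriality then follows from exponential stability, proved separately in \cref{sec:exp-fsi}, together with a compactness/connectedness argument for the resolvent set). The key algebraic step: use the density equation $\lambda\rho+\overline\rho\,\div v=g_\rho$ to eliminate $\rho=\lambda^{-1}(g_\rho-\overline\rho\,\div v)$, so that $\nabla\rho$ enters the velocity equation as $-\overline\rho\lambda^{-1}\nabla\div v$ plus a datum; the velocity equation becomes a resolvent problem for $\lambda v-\frac{\mu}{\overline\rho}\Delta v-\frac{\alpha+\mu}{\overline\rho}\nabla\div v+\frac{\overline\rho}{\lambda}\nabla\div v=\widetilde g_v$ — i.e.\ the standard compressible Lamé-type resolvent, whose $\mr$-boundedness on $L^q$ is known (Enomoto–Shibata type results, or the $\mr$-sectoriality of $A_{\textnormal v}$ noted after \eqref{Au} combined with the $\lambda$-dependent correction being small for large $|\lambda|$). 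The plate block $A_S$ is $\mr$-sectorial of angle $>\pi/2$ by \cite[Theorem 5.1]{DenkSchnaubelt}, already used in Step 1 of \cref{thm-linear}. The coupling is through the boundary condition $v=\mathcal{T}\eta_2$ on $\partial\mf$, encoded in the domain constraint $v-D_{\textnormal v}\eta_2\in\mathcal{D}(A_{\textnormal v})$: I would substitute $u=v-D_{\textnormal v}\eta_2$ and rewrite the system for $(\rho,u,\eta_1,\eta_2)$ with homogeneous boundary data, so that $D_{\textnormal v}\eta_2$ produces only extra forcing terms and a $\lambda D_{\textnormal v}\eta_2$ term in the $u$-equation. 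Using that $D_{\textnormal v}\in\mathcal{L}(W^{2,q}_0(\ms);W^{2,q}(\mf)^3)$ and, by transposition, $D_{\textnormal v}\in\mathcal{L}(L^q(\ms);L^q(\mf)^3)$, together with interpolation and the resolvent decay $\|\lambda(\lambda-A_S)^{-1}\|\lesssim 1$, one shows the off-diagonal coupling terms are $\mr$-bounded and, after absorbing them, that the full block resolvent family $\{\lambda(\lambda-\mathcal{A}_{FS}^0)^{-1}\}$ is $\mr$-bounded on $\Sigma_\beta$, $|\lambda|\geq\lambda_0$.

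Once $\mathcal{A}_{FS}^0-\gamma_0$ is shown $\mr$-sectorial of angle $>\pi/2$ on $\mx$, the perturbation $\mathcal{B}_{FS}$ is handled by \cref{pr:perturb}: its nonzero entries are $\rho\mapsto-\frac{R_0\ot}{\overline\rho}\nabla\rho-R_0\nabla\vartheta$ (a first-order operator, hence controlled by $\|\,\cdot\,\|_{W^{1,q}}\leq \varepsilon\|\mathcal{A}_{FS}^0(\cdot)\|_{\mx}+C_\varepsilon\|\cdot\|_{\mx}$ — note $W^{1,q}$ is exactly the $\rho$-component of $\mx$, so actually $\nabla\rho$ is already bounded in $L^q$ by the $\mx$-norm and one only needs the small-$a$ condition for the $\vartheta$-term via interpolation $\|\nabla\vartheta\|_{L^q}\leq\varepsilon\|\vartheta\|_{W^{2,q}}+C_\varepsilon\|\vartheta\|_{L^q}$), and $\eta_2\mapsto-\overline{\mathbb{T}}(\rho,v,\vartheta)e_3\cdot e_3$, which involves $\mathbb{D}(v)$, $\div v$, $\rho$, $\vartheta$ restricted to $\Gamma_S$ — a trace term, controlled by $\|v\|_{W^{2,q}}^{\theta}\|v\|_{L^q}^{1-\theta}$-type interpolation plus lower-order pieces. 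Choosing the interpolation parameters to make the relative bound $a$ smaller than $1/(\widetilde M_\beta\,\widetilde{\mr_\beta})$ and then shifting by a large enough $\gamma$ gives the claim. \textbf{The main obstacle} I anticipate is the coupling term: verifying that the boundary-coupling via $D_{\textnormal v}$ and the plate force $-\overline{\mathbb{T}}(\rho,v,\vartheta)e_3\cdot e_3$ (a \emph{trace} of a first-order quantity, not lower order in the naive sense) can be absorbed — this requires care with fractional powers / interpolation spaces to see that it is genuinely a relatively-small perturbation of the block-diagonal generator, and it is exactly here that one must exploit the structural damping $-\Delta_s\partial_t\eta$ (which makes the plate semigroup analytic and the map $\eta_2\mapsto$ (trace data) compatible with $\mathcal{D}(A_S)\hookrightarrow W^{4,q}$), rather than trying to bound a raw second-order fluid term by a zeroth-order structure norm.
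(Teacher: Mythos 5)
Your overall architecture --- split $\mathcal{A}_{FS}=\mathcal{A}_{FS}^0+\mathcal{B}_{FS}$, prove $\mr$-sectoriality of the principal part from the known $\mr$-sectoriality of $A_{\textnormal{v}}$, $A_{\vartheta}$, $A_S$, then absorb $\mathcal{B}_{FS}$ via \cref{pr:perturb} using interpolation/Ehrling-type relative bounds --- is exactly the paper's, and your treatment of the perturbation step (first-order terms in $\rho,\vartheta$ plus the trace term $\overline{\mathbb{T}}(\rho,v,\vartheta)e_3\cdot e_3$, controlled in $W^{1+s,q}$ and made relatively small by compactness) matches the paper's almost verbatim. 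Where you diverge is in the analysis of $\mathcal{A}_{FS}^0$, and there you have misread the decomposition: in the paper's $\mathcal{A}_{FS}^0$ the velocity row is $A_{\textnormal{v}}(v-D_{\textnormal{v}}\eta_2)$ with \emph{no} pressure coupling --- the terms $-\frac{R_0\ot}{\orh}\nabla\rho-R_0\nabla\vartheta$ sit in $\mathcal{B}_{FS}$. Consequently your ``key algebraic step'' (eliminating $\rho$ to produce a Lam\'e resolvent with a $\lambda^{-1}\nabla\div$ correction, i.e.\ the operator $A_{\textnormal{v},\lambda}$) is not needed here; that elimination is what the paper uses later, in Step 3 of the proof of \cref{th:exp-stab}, where the full coupled resolvent is inverted. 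For $\mathcal{A}_{FS}^0$ the system is genuinely upper triangular in $(\rho,v,\vartheta,(\eta_1,\eta_2))$, and the paper simply writes the resolvent $\lambda(\lambda-\mathcal{A}_{FS}^0)^{-1}$ as an explicit triangular matrix whose entries are products of $\lambda(\lambda-A_{\textnormal{v}})^{-1}$, $\lambda(\lambda-A_S)^{-1}$, $\div$, and $\widetilde{D_{\textnormal{v}}}$ (using the two mapping properties of $D_{\textnormal{v}}$ to split the off-diagonal entry), then invokes the sum/product rules \eqref{eq:Rbdd-1}. Your substitution $u=v-D_{\textnormal{v}}\eta_2$ amounts to the same bookkeeping and would work, but the absorption argument you sketch is heavier than necessary. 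Finally, your appeal to exponential stability for small $|\lambda|$ is both superfluous (one only needs the resolvent on $\gamma+\Sigma_\beta$ after shifting, so small $|\lambda|$ in $\Sigma_\beta$ causes no difficulty) and slightly hazardous, since \cref{th:exp-stab} holds only on the subspace $\mx_{\textnormal{m}}$, not on all of $\mx$; the correct logic is that the shift $\gamma$ comes out of \cref{pr:perturb} and of the shifts already present in the diagonal blocks, with no spectral information near the origin required.
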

\begin{proof}
In order to prove the theorem, we first combine  \cite[Theorem 2.5]{ShibaEno13}, \cite[Theorem 8.2]{DenkHieberPruss} and \cite[Theorem 5.1]{DenkSchnaubelt}: 
there exist $\gamma > 0$ and $ \beta > \pi/2$ such that 
the operators $A_{\textnormal{v}}-\gamma$, 
$A_{\vartheta}-\gamma$ and $A_{S}-\gamma$ are $\mr$-sectorial operators of angle $\beta$.

Second, standard calculation shows that for $\lambda\in \gamma+\Sigma_\beta$ (see \eqref{sigma}),
\begin{equation*}
\lambda (\lambda I  - \mathcal{A}^{0}_{FS})^{-1}  = \begin{bmatrix}
\Id & -\orh \div (\lambda I - A_{\textnormal{v}})^{-1} & 0 
	& \orh \div A_{\textnormal{v}} (\lambda I - A_{\textnormal{v}})^{-1} \widetilde{D_{\textnormal{v}}}  (\lambda I - A_{S})^{-1}\\
0 & \lambda(\lambda I - A_{\textnormal{v}})^{-1} & 0 & -A_{\textnormal{v}} (\lambda I - A_{\textnormal{v}})^{-1} \widetilde{D_{\textnormal{v}}} \lambda (\lambda I - A_{S})^{-1} \\ 
0 & 0 &  \lambda(\lambda I - A_{\vartheta})^{-1} & 0 \\
0 & 0 &   0 & \lambda(\lambda I - A_{S})^{-1} 
\end{bmatrix}, 
\end{equation*}
where  $\ds \widetilde{D_{\textnormal{v}}} [\eta_{1}, \eta_{2}]^{\top} = D_{\textnormal{v}} \eta_{2}.$ 
Using the properties of $\mathcal{R}$-boundedness recalled in \cref{sec_back}, we deduce that
$\mathcal{A}_{FS}^{0} - \gamma$ is $\mr$-sectorial operator in $\mx$ of angle $\beta.$ 
Note that in instance, we can write
$$
\div A_{\textnormal{v}} (\lambda I - A_{\textnormal{v}})^{-1} \widetilde{D_{\textnormal{v}}}  (\lambda I - A_{S})^{-1}
=-\div \widetilde{D_{\textnormal{v}}}  (\lambda I - A_{S})^{-1}
+\div  (\lambda I - A_{\textnormal{v}})^{-1} \widetilde{D_{\textnormal{v}}} \lambda  (\lambda I - A_{S})^{-1}
$$
and then use that 
$D_{\text v} \in \mathcal{L}(W^{2,q}_{0}(\ms); W^{2,q}(\mf)^{3})\cap \mathcal{L}(L^{q}_{0}(\ms); L^{q}(\mf)^{3}).$

Next, using trace results, for $s \in (1/q,1)$ there exists a constant $C$ such that
$$
\norm{\mathcal{B}_{FS}[\rho, v, \vartheta, \eta_{1}, \eta_{2}]^{\top} }_{\mx}  
	\leqslant C \left( \norm{\rho}_{W^{1,q}(\mf)}  + \norm{v}_{W^{1+s,q}(\mf)^{3}} + \norm{\vartheta}_{W^{1+s,q}(\mf)}   \right)
	\quad
	[\rho, v, \vartheta, \eta_{1}, \eta_{2}]^\top \in \mathcal{D}(\mathcal{A}_{FS}).
$$
Since the embedding $W^{1+s,q}(\mf) \hookrightarrow W^{2,q}(\mf)$ is compact for $s \in (1/q,1),$ 
for any $\varepsilon > 0$ there exists $C(\varepsilon) > 0$ such that 
\begin{equation} 
 \norm{\mathcal{B}_{FS}[\rho, v, \vartheta, \eta_{1}, \eta_{2}]^{\top} }_{\mx} 
 	\leqslant \varepsilon   \norm{\mathcal{A}^{0}_{FS}[\rho, v, \vartheta, \eta_{1}, \eta_{2}]^{\top} }_{\mx} 
	+ C(\varepsilon)  \norm{[\rho, v, \vartheta, \eta_{1}, \eta_{2}]^{\top} }_{\mx}.
\end{equation}
Finally using \cref{pr:perturb} we conclude the proof of the theorem. 
\end{proof}

\subsection{Exponential stability of the fluid-structure semigroup}  \label{sec:exp-fsi}
The aim of this subsection is to show that the operator $\mathcal{A}_{FS}$ generates an analytic semigroup of negative type in the following
subspace of $\mathcal{X}:$
\begin{equation} \label{eq:Xm}
\mathcal{X}_{\textnormal{m}} = \left\{ [f_{1}, f_{2}, f_{3},h_{1}, h_{2}]^{\top} \in \mx \ ; \  \int_{\mf} f_{1} \rd y \; +  \orh \int_{\ms} h_{1} \rd s = 0, \; \int_{\mf} f_{3} \rd y = 0 \right\}.
\end{equation}
We can verify that $\mx_{\textnormal{m}}$ is invariant under $\left( e^{t \mathcal{A}_{FS}}\right)_{t \geqslant 0}$. 
Therefore we can consider the restriction of $\mathcal{A}_{FS}$ to the domain $\mathcal{D}(\mathcal{A}_{FS}) \cap \mx_{\textnormal{m}}$ (\cite[Definition 2.4.1]{TW09}).
For this operator, we have the following result:
\begin{Theorem} \label{th:exp-stab}
Let $1 < q < \infty.$ The part of $\mathcal{A}_{FS}$ in $\mx_{\textnormal{m}}$ generates an exponentially stable semigroup $\left( e^{t \mathcal{A}_{FS}}\right)_{t \geqslant 0}$ on $\mx_{\textnormal{m}} :$ there exists constants $C > 0$ and $\beta_{0} > 0$ such that 
\begin{equation}
\norm{e^{t \mathcal{A}_{FS}} [\rho^{0}, v^{0}, \vartheta^{0}, \eta_{1}^{0}, \eta_{2}^{0}]^{\top}}_{\mx} \leqslant C e^{-\beta_{0} t} 
\norm{ [\rho^{0}, v^{0}, \vartheta^{0}, \eta_{1}^{0}, \eta_{2}^{0}]^{\top}}_{\mx},  \qquad (t \geqslant 0),
\end{equation}
for all $[\rho^{0}, v^{0}, \vartheta^{0}, \eta_{1}^{0}, \eta_{2}^{0}]^{\top} \in \mx_{\textnormal{m}}.$ 
\end{Theorem}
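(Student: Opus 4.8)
The plan is to deduce exponential stability from a spectral analysis of the generator, using the $\mr$-sectoriality of \cref{th:R-sec}. Since $\mathcal{A}_{FS}-\gamma$ is $\mr$-sectorial of angle $>\pi/2$ on $\mx$, its restriction to the closed invariant subspace $\mx_{\textnormal{m}}$ is again $\mr$-sectorial of angle $>\pi/2$ and therefore generates an analytic semigroup on $\mx_{\textnormal{m}}$; for analytic semigroups the growth bound equals the spectral bound. Moreover the sectoriality confines $\sigma(\mathcal{A}_{FS})$ to $\gamma+(\mathbb C\setminus\Sigma_\beta)$ with $\beta>\pi/2$, so that $\Re\lambda\to-\infty$ along the spectrum as $|\Im\lambda|\to\infty$ and hence $\sigma(\mathcal{A}_{FS})\cap\{\Re\lambda\geqslant-1\}$ is compact. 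It therefore suffices to prove that $\{\Re\lambda\geqslant 0\}\subset\rho(\mathcal{A}_{FS}|_{\mx_{\textnormal{m}}})$: this forces $\sup\{\Re\lambda\ ;\ \lambda\in\sigma(\mathcal{A}_{FS}|_{\mx_{\textnormal{m}}})\}<0$ and gives the claimed exponential decay.

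To show $\{\Re\lambda\geqslant 0\}\subset\rho(\mathcal{A}_{FS}|_{\mx_{\textnormal{m}}})$ I would argue in two stages. First, for $\lambda\neq 0$, one eliminates the density from the resolvent equation through $\rho=\lambda^{-1}(f_1-\orh\div v)$ and substitutes it into the momentum equation; this recasts $\lambda-\mathcal{A}_{FS}$ as an elliptic system for $[v,\vartheta,\eta_1,\eta_2]^\top$ whose principal part in $v$ is $-\tfrac{\mu}{\orh}\Delta-\bigl(\tfrac{\mu+\alpha}{\orh}+\tfrac{R_0\ot}{\lambda}\bigr)\nabla\div$, which stays uniformly elliptic for $\Re\lambda\geqslant 0$ since $\Re\bigl(\tfrac{\mu+\alpha}{\orh}+\tfrac{R_0\ot}{\lambda}\bigr)\geqslant\tfrac{\mu+\alpha}{\orh}>0$ by \eqref{visco-relation}. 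With its Dirichlet, Neumann and clamped boundary conditions this system is Fredholm of index zero (the gain of regularity being compact by Rellich--Kondrachov), so invertibility reduces to injectivity. Second, the case $\lambda=0$ is treated by hand: the resolvent equation then decouples into a stationary Stokes problem for $(v,\rho)$ with $\rho$ playing the role of the pressure, a Neumann problem for $\vartheta$, and a clamped biharmonic problem for $\eta_1$, and the two free additive constants of $\rho$ and $\vartheta$ are determined uniquely by the two mean-value constraints defining $\mx_{\textnormal{m}}$, using that $\int_{\ms}\psi=\int_{\ms}|\Delta_{s}\psi|^2>0$, where $\psi$ solves $\Delta_{s}^2\psi=1$ in $\ms$ with $\psi=\nabla_{s}\psi\cdot n_{S}=0$ on $\partial\ms$.

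It remains to exclude eigenvalues with $\Re\lambda\geqslant 0$. Let $U=[\rho,v,\vartheta,\eta_1,\eta_2]^\top\in\mathcal D(\mathcal{A}_{FS})\cap\mx_{\textnormal{m}}$ satisfy $\mathcal{A}_{FS}U=\lambda U$ with $\Re\lambda\geqslant 0$; by elliptic regularity $U$ is regular enough for the energy computations that follow. The temperature equation is uncoupled, $\lambda\vartheta=\overline\kappa\Delta\vartheta$ with $\partial_n\vartheta=0$ and $\int_{\mf}\vartheta=0$, and testing with $\overline\vartheta$ gives $\vartheta\equiv 0$. Then, testing the momentum equation with $\orh\overline v$, using the density equation to replace $\div v$ by $-\lambda\rho/\orh$, testing the plate equation with $\overline{\eta_2}$, integrating by parts and cancelling the interface terms via $v=\mathcal{T}\eta_2$, and finally taking real parts, one obtains
\[
\Re\lambda\,\Bigl(\orh\|v\|_{L^2(\mf)}^2+\tfrac{R_0\ot}{\orh}\|\rho\|_{L^2(\mf)}^2+\|\eta_2\|_{L^2(\ms)}^2+\|\Delta_{s}\eta_1\|_{L^2(\ms)}^2\Bigr)+2\mu\|\mathbb{D}v\|_{L^2(\mf)}^2+\alpha\|\div v\|_{L^2(\mf)}^2+\|\nabla_{s}\eta_2\|_{L^2(\ms)}^2=0.
\]
Since $2\mu|\mathbb{D}v|^2+\alpha|\div v|^2\geqslant c\,|\mathbb{D}v|^2$ for some $c>0$ (using $\alpha+\tfrac23\mu>0$ and $|\div v|^2\leqslant 3|\mathbb{D}v|^2$) and $\Re\lambda\geqslant 0$, every term must vanish; in particular $\mathbb{D}v\equiv 0$ and $\nabla_{s}\eta_2\equiv 0$. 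Thus $v$ is an infinitesimal rigid displacement, and since it vanishes on the relatively open set $\Gamma_0\subset\partial\mf$ we get $v\equiv 0$; since $\eta_2\in W^{2,q}_0(\ms)$ is constant it also vanishes, $\eta_2\equiv 0$. If $\lambda\neq 0$ this yields $\eta_1=\lambda^{-1}\eta_2\equiv 0$ and $\rho=-\orh\lambda^{-1}\div v\equiv 0$, so $U=0$, a contradiction. If $\lambda=0$, the momentum equation gives $\nabla\rho\equiv 0$, the plate equation gives $\Delta_{s}^2\eta_1=R_0\ot\rho$ (a constant), and then the constraint $\int_{\mf}\rho+\orh\int_{\ms}\eta_1=0$ together with $\int_{\ms}\psi>0$ forces $\rho\equiv 0$ and $\eta_1\equiv 0$; again $U=0$.

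The main obstacle is the absence of smoothing in the density component: the first row of $\mathcal{A}_{FS}$ does not regularize $\rho\in W^{1,q}(\mf)$, so the resolvent of $\mathcal{A}_{FS}$ is not compact and discreteness of the spectrum cannot be invoked directly; the elimination of $\rho$ above is precisely the device that restores a compact elliptic sub-problem, at the price of a $\lambda$-dependent principal symbol whose uniform ellipticity on $\{\Re\lambda\geqslant 0\}$ has to be checked. A secondary technical point is the analysis at $\lambda=0$, where the two mean-value conditions built into $\mx_{\textnormal{m}}$ are exactly what is needed to obtain a trivial kernel — on all of $\mx$ the operator $\mathcal{A}_{FS}$ does possess a nontrivial (two-dimensional) kernel.
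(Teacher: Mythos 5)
Your proposal is correct and follows essentially the same route as the paper's proof: reduce exponential stability to showing $\overline{\mathbb{C}^{+}}\subset\rho(\mathcal{A}_{FS}|_{\mx_{\textnormal{m}}})$ via analyticity of the semigroup, handle $\lambda\neq 0$ by eliminating $\rho$ to obtain a $\lambda$-dependent elliptic operator $A_{\textnormal{v},\lambda}$ that is Fredholm of index zero, treat $\lambda=0$ through the Stokes/Neumann/biharmonic decoupling with the two mean-value constraints fixing the free constants, and rule out eigenvalues with the same energy identity (including the $\frac{R_0\ot}{\orh}\Re\lambda\|\rho\|_{L^2}^2$ term obtained by substituting the density equation, and the use of $\alpha+\tfrac{2}{3}\mu>0$). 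The only cosmetic deviation is your use of the auxiliary function $\psi$ with $\Delta_s^2\psi=1$ to settle the $\lambda=0$ kernel, where the paper instead multiplies the nonlocal biharmonic equation \eqref{eq:eta1-bis} by $\eta_1$; the two arguments are equivalent.
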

To show the above theorem, it sufficient to show that 
$\mathbb{C}^+\subset \rho({\mathcal{A}_{FS}}_{|\mathcal{D}(\mathcal{A}_{FS}) \cap \mx_{\textnormal{m}}}).$
We thus consider the following resolvent problem 
\begin{equation} \label{eq:resolvent-0}
\begin{dcases}
\lambda \rho + \orh \div v = f_{1}  & \mbox{ in } \mf, \\
\lambda v -  \dfrac{1}{\orh}\div \overline{\mathbb{T}} (\rho, v, \vartheta) = f_{2} & \mbox{ in }  \mf, \\
\lambda \vartheta - \overline{\kappa} \Delta \vartheta = f_{3} & \mbox{ in }  \mf, \\
v = \mathcal{T} \eta_{2}, \quad 
\frac{\partial \vartheta}{\partial n} = 0 & \mbox{ on }  \partial\mf, \\
\lambda \eta_{1} - \eta_{2} = h_{1} & \mbox{ in } \ms, \\
\lambda \eta_{2} + \Delta_{s}^{2} \eta_{1} - \Delta_{s}  \eta_{2} =  - \overline{\mathbb{T}}(\rho, v, \vartheta)e_{3} \cdot e_{3} +  h_{2} & \mbox{ in }  \ms, \\ 
\eta_{1} = \nabla_{s} \eta_{1} \cdot n_{S} = \eta_2=0 & \mbox{ on }  \partial \ms.
\end{dcases} 
\end{equation}

\begin{Remark}
If $\lambda =0,$ integrating the first and third equation of  \eqref{eq:resolvent-0} and using the boundary conditions of $v$ and $\vartheta$ we obtain 
\begin{equation*}
 \int_{\mf} f_{1} \rd y \; +  \orh \int_{\ms} h_{1} \rd s = 0 \mbox{ and }  \; \int_{\mf} f_{3} \rd y = 0.
\end{equation*}
Therefore, in order to study exponential stability of the semigroup it is necessary to consider the space $\mx_{\textnormal{m}}$ instead of $\mx.$
\end{Remark}

\begin{proof}
Assume $\lambda \in \mathbb{C}^{+}$ and $[f_{1}, f_{2}, f_{3} , h_{1}, h_{2}]^\top\in \mx_{\textnormal{m}}.$
We need to show that the system \eqref{eq:resolvent-0} admits a unique solution 
$[\rho, v, \vartheta, \eta_{1}, \eta_{2}]^\top\in \mathcal{D}(\mathcal{A}_{FS}) \cap \mx_{\textnormal{m}}$ together with an estimate 
\begin{equation*}
\norm{[\rho, v, \vartheta, \eta_{1}, \eta_{2}]^\top}_{\mathcal{D}(\mathcal{A}_{FS})}\leqslant C \norm{[f_{1}, f_{2}, f_{3} , h_{1}, h_{2}]^\top}_{\mx}.
\end{equation*}
The proof is divided into several parts. 

{\bf Step 1: Uniqueness.} Let us assume that $[\rho, v, \vartheta, \eta_{1}, \eta_{2}]^\top\in \mathcal{D}(\mathcal{A}_{FS})\cap \mx_{\textnormal{m}}$ solves the system \eqref{eq:resolvent-0} with $[f_{1}, f_{2}, f_{3}, h_{1}, h_{2}]^{\top} = 0$.  We notice that
\begin{equation} \label{all-L2}
\vect{\rho, v, \vartheta, \eta_{1}, \eta_{2}} \in W^{1,2}(\mf) \times W^{2,2}(\mf)^{3} \times W^{2,2}(\mf) \times W^{4,2}(\ms) \times W^{2,2}(\ms). 
\end{equation}
If $q \geqslant 2$ then it is a consequence of H\"older's inequality.  Else, $1 < q < 2$ and we take $\lambda_{0} \in \rho(\mathcal{A}_{FS})$  
to rewrite \eqref{eq:resolvent-0}  as 
\begin{equation*}
\left(\lambda_{0} - \mathcal{A}_{FS}\right) [\rho, v, \vartheta, \eta_{1}, \eta_{2}]^{\top} = \left(\lambda_{0} - \lambda \right) [\rho, v, \vartheta, \eta_{1}, \eta_{2}]^{\top}.
\end{equation*}
Since $W^{2,q}(\mf) \hookrightarrow L^{2}(\mf)$ and $W^{2,q}(\ms) \hookrightarrow L^{2}(\ms),$ we deduce \eqref{all-L2} from the the invertibility of the operator $(\lambda_{0} - \mathcal{A}_{FS}).$

Multiplying $\eqref{eq:resolvent-0}_{3}$ by $\vartheta$, we obtain after integration by parts 
\begin{equation*}
 \lambda \int_{\mf} |\vartheta|^{2} \ \rd y +  \overline{\kappa} \int_{\mf} |\nabla \vartheta|^{2} \ \rd y = 0.
\end{equation*}
Since $\Re \lambda \geqslant 0$ and $\ds \int_{\mf} \vartheta  \ \rd y= 0,$ we obtain $\vartheta = 0.$

Next, multiplying  $\eqref{eq:resolvent-0}_{2}$ by $v,$ $\eqref{eq:resolvent-0}_{6}$ by $\eta_{2},$  after integration by parts and taking the real part, we deduce 
\begin{multline*}
\frac{R_0 \ot}{\orh} (\Re \lambda)  \int_{\mf} |\rho|^{2} \ \rd y  
+\orh (\Re \lambda)  \int_{\mf} |v|^{2} \ \rd y 
+ 2\mu \int_{\mf} |\mathbb{D} v|^{2} \ \rd y + \alpha \int_{\mf} (\div v)^{2} \ \rd y \\
+ \Re\lambda \int_{\ms} |\eta_{2}|^{2} \ \rd s + \Re\lambda \int_{\ms} |\Delta_{s} \eta_{1}|^{2} + \int_{\ms} |\nabla_{s} \eta_{2}|^{2} = 0.
\end{multline*}
Since $\Re \lambda \geqslant 0,$ using $\eqref{visco-relation}$ and using the boundary conditions we obtain $v = \eta_{2} =0$ and that $\rho$ is a constant.
Using that $\vect{\rho, v, \vartheta, \eta_{1}, \eta_{2}} \in \mx_{\textnormal{m}}$ we deduce that $\eta_{1}$ solves 
\begin{equation} \label{eq:eta1-bis}
\begin{dcases}
 \Delta_{s}^{2} \eta_{1}  +  \frac{R_0\ot \orh}{|\mf|} \int_{\ms} \eta_{1} \rd s =  0 & \mbox{ in }  \ms, \\ 
\eta_{1} = \nabla_{s} \eta_{1} \cdot n_{S} = 0 & \mbox{ on }  \partial \ms. 
\end{dcases} 
\end{equation}
Multiplying the first equation of the above system by $\eta_1$ and integrating by parts, we deduce that $\eta_1=0$ and that $\rho=0$.

{\bf Step 2. Existence for $\lambda =0$.} We consider the system \eqref{eq:resolvent-0} with $\lambda = 0.$
It can be written as follows
$$
\eta_2=-h_1 \ \text{in} \ \ms,
$$
$$
- \overline{\kappa} \Delta \vartheta = f_{3}  \mbox{ in }  \mf, \quad \frac{\partial \vartheta}{\partial n} = 0  \mbox{ on }  \partial\mf, 
\quad \int_{\mf} \vartheta \ dy =0,
$$
\begin{equation} \label{eq:resolvent-0----}
\begin{dcases}
- \mu \Delta v+R_0 \ot \nabla \rho = \orh f_{2} +\frac{\alpha+\mu}{\orh}\nabla f_1-R_0\orh\nabla \vartheta & \mbox{ in }  \mf, \\
 \div v = \frac{1}{\orh} f_{1}  & \mbox{ in } \mf, \\
v = -\mathcal{T} h_{1} & \mbox{ in } \partial \mf,
\end{dcases} 
\end{equation}
\begin{equation} \label{eq:resolvent-0----2}
\begin{dcases}
 \Delta_{s}^{2} \eta_{1}  =  - \overline{\mathbb{T}}(\rho, v, \vartheta)e_{3} \cdot e_{3} - \Delta_{s}  h_1+  h_{2} & \mbox{ in }  \ms, \\ 
\eta_{1} = \nabla_{s} \eta_{1} \cdot n_{S} =0 & \mbox{ on }  \partial \ms.
\end{dcases} 
\end{equation}
\begin{equation}\label{eq:resolvent-0----3}
\int_{\mf} \rho \rd y +  \orh \int_{\ms} \eta_{1} \rd s = 0,
\end{equation}

We can solve the two first equations and obtain the existence and uniqueness of $\vartheta \in W^{2,q}(\mf)$ and $\eta_{2} \in W^{2,q}_{0}(\ms)$ 
and we have the following estimate 
\begin{equation*}
\norm{\vartheta}_{W^{2,q}(\mf)} \leqslant C  \norm{f_{3}}_{L^{q}(\mf)}, \qquad \norm{\eta_{2}}_{W^{2,q}(\ms)} = \norm{h_{1}}_{W^{2,q}(\ms)}.
\end{equation*}

Using that $[f_{1}, f_{2}, f_{3} , h_{1}, h_{2}]^\top\in \mx_{\textnormal{m}},$ we can solve \eqref{eq:resolvent-0----} (see, for instance \cite[Proposition 2.3, p.35]{Temam})
and we obtain the existence and uniqueness of $(\rho, v) \in \left(W^{1,q}(\mf)/\mathbb{R}\right) \times W^{2,q}(\mf)^{3}$ with the following estimate 
\begin{equation*}
\norm{v}_{W^{2,q}(\mf)^{3}} + \norm{\rho}_{W^{1,q}(\mf)/\mathbb{R}} \leqslant \left( \norm{f_{1}}_{W^{1,q}(\mf)} + \norm{f_{2}}_{L^{q}(\mf)^{3}} + \norm{f_{3}}_{L^{q}(\mf)} + \norm{h_{1}}_{W^{2,q}_{0}(\ms)} \right).
\end{equation*}
Then we decompose $\rho=\rho_{\textnormal{m}}+\rho_{\textnormal{avg}}$, with
$$
\rho_{\textnormal{avg}}=\frac{1}{|\mf|} \int_{\mf} \rho \ dy=-\frac{\orh}{|\mf|} \int_{\ms} \eta_{1} \rd s 
$$
and we can rewrite \eqref{eq:resolvent-0----2} as
\begin{equation} \label{eq:resolvent-0----8}
\begin{dcases}
 \Delta_{s}^{2} \eta_{1}+\frac{R_0\ot \orh}{|\mf|} \int_{\ms} \eta_{1} \rd s 
 	 =  - \overline{\mathbb{T}}(\rho_{\textnormal{m}}, v, \vartheta)e_{3} \cdot e_{3} - \Delta_{s}  h_1+  h_{2} & \mbox{ in }  \ms, \\ 
\eta_{1} = \nabla_{s} \eta_{1} \cdot n_{S} =0 & \mbox{ on }  \partial \ms.
\end{dcases} 
\end{equation}
Using the Fredholm alternative, the above system admits a unique solution $\eta_{1} \in W^{4,q}(\ms)$ and
$$
\|\eta_1\|_{W^{4,q}(\ms)} \leq C \norm{[f_{1}, f_{2}, f_{3} , h_{1}, h_{2}]^\top}_{\mx}.
$$

{\bf Step 3. Existence for $\lambda \in \mathbb{C}^{+}, \lambda \neq 0.$} 
By setting $\ds \rho = \frac{1}{\lambda} (f_{1} - \orh \div v),$ the system \eqref{eq:resolvent-0} can be rewritten as 
\begin{equation} \label{eq:resolvent-2}
\begin{dcases}
\lambda v -  \dfrac{1}{\orh}\div \widehat{\mathbb{T}}_{\lambda}  (v, \vartheta)  =  \widehat{f}_{2} & \mbox{ in }  \mf, \\
\lambda \vartheta - \overline{\kappa} \Delta \vartheta = f_{3} & \mbox{ in }  \mf, \\
v = \mathcal{T} \eta_{2}, \quad 
\frac{\partial \vartheta}{\partial n} = 0 & \mbox{ on }  \partial\mf, \\
\lambda \eta_{1} - \eta_{2} = h_{1} & \mbox{ in } \ms, \\
\lambda \eta_{2} + \Delta_{s}^{2} \eta_{1} - \Delta_{s}  \eta_{2} =  - \widehat{\mathbb{T}}_{\lambda}( v, \vartheta)e_{3} \cdot e_{3} +  \widehat{h}_{2} & \mbox{ in }  \ms, \\ 
\eta_{1} = \nabla_{s} \eta_{1} \cdot n_{S} = \eta_2=0 & \mbox{ on }  \partial \ms.
\end{dcases} 
\end{equation}
where 
\begin{gather*}
 \widehat{\mathbb{T}}_{\lambda}( v, \vartheta) 
 =  2\mu \mathbb{D} (v) + \left(\left( \alpha + \frac{R_0\ot\orh}{\lambda} \right)\div v  - R_0\orh \vartheta \right) I_{3}, \\
 \widehat{f}_{2} = f_{2} - \frac{R_0\ot}{\lambda\orh} \nabla f_{1}, \qquad \widehat{h}_{2} = h_{2} +\frac{R_0\ot}{\lambda} f_{1}|_{\ms}.
\end{gather*}
Let us set $\widehat{\mx} =  L^{q}(\mf)^{3} \times L^{q}(\mf) \times W^{2,q}_{0}(\ms) \times L^{q}(\ms).$   We define (see \eqref{Au})
\begin{equation*}
\mathcal{D}(A_{\textnormal{v}, \lambda}) = \mathcal{D}(A_{\textnormal{v}}),\quad 
A_{\textnormal{v}, \lambda} = \frac{\mu}{\orh} \Delta + \left( \frac{\alpha + \mu}{\orh} + \frac{R_{0}\orh\ot}{\lambda}\right) \nabla \div. 
\end{equation*}
In view of \cite[Theorem 1.4]{ST04} and of the Fredholm theorem, for each $\lambda$ with $\Re \lambda \geq 0,$ $ A_{\textnormal{v}, \lambda}$ is an isomorphism from $\mathcal{D}( A_{\textnormal{v}, \lambda})$ onto $L^{q}(\mf)^{3}$ for any $q \in (1, \infty).$  Let $D_{\textnormal{v}, \lambda} \in \mathcal{L}(W^{2,q}_{0}(\ms), W^{2,q}(\mf)^{3})$
defined by $D_{\textnormal{v}, \lambda} g = w,$ where $w$ is the solution to the problem 
\begin{equation*}
\begin{dcases}
-\frac{\mu}{\orh} \Delta w - \left( \frac{\alpha + \mu}{\orh} + \frac{R_{0}\orh\ot}{\lambda}\right) \nabla (\div w) = 0 & \mbox{ in } \mf, \\
w = \mathcal{T} g & \mbox{ on } \partial \mf.
\end{dcases}
\end{equation*}
We introduce the unbounded operator $\mathcal{A}_{\lambda} : \mathcal{D}(\mathcal{A}_{\lambda}) \to \widehat{\mx}$ defined by 
\begin{equation*}
\mathcal{D}(\mathcal{A}_{\lambda}) = \left\{ [v, \vartheta, \eta_{1}, \eta_{2}]^{\top} \in W^{2,q}(\mf)^{3} \times \mathcal{D}(A_{\vartheta}) \times \mathcal{D}(A_{S}) \mid v - D_{\textnormal{v}, \lambda} \eta_{2} \in \mathcal{D}(A_{\textnormal{v}, \lambda})  \right\},
\end{equation*}
and 
\begin{equation*}
\mathcal{A}_{\lambda} 
\begin{bmatrix} v \\ \vartheta \\ \eta_{1} \\ \eta_{2}  \end{bmatrix} = 
\begin{bmatrix} A_{\textnormal{v}, \lambda} (v - D_{\textnormal{v}, \lambda} \eta_{2})  - R_{0} \nabla \vartheta  \\  A_{\vartheta} \vartheta \\ \eta_{2}  \\
-\Delta_{s}^{2} \eta_{1} + \Delta_{s} \eta_{2} - \widehat{\mathbb{T}}_{\lambda}(v, \vartheta) e_{3} \cdot e_{3}
\end{bmatrix}.
\end{equation*}
With the above notations, the system \eqref{eq:resolvent-2} can be written as 
\begin{equation}\label{eq:Alambda}
(\lambda I  - \mathcal{A}_{\lambda}) [v, \vartheta, \eta_{1}, \eta_{2}]^{\top}  =  \left[\widehat{f}_{2}, f_{3}, h_{1}, \widehat{h}_{2} \right]^{\top}. 
\end{equation}
Proceeding as in the proof of \cref{th:R-sec}, one can show the existence of $\widetilde{\lambda}\in \rho(\mathcal{A}_{\lambda})$. Using that 
$\mathcal{A}_{\lambda}$ has compact resolvent and the Fredholm alternative theorem, 
the existence and uniqueness of a solution to the system \eqref{eq:resolvent-2} are equivalent. 
Let us consider a solution of \eqref{eq:resolvent-2}  with $\left[\widehat{f}_{2}, f_{3}, h_{1}, \widehat{h}_{2} \right]^{\top}=0$. As in Step 1, we can deduce that
\begin{equation} \label{all-L2---}
[v, \vartheta, \eta_{1}, \eta_{2}]^\top \in W^{2,2}(\mf)^{3} \times W^{2,2}(\mf) \times W^{4,2}(\ms) \times W^{2,2}(\ms). 
\end{equation}
Then $\vartheta=0$ and multiplying \eqref{eq:resolvent-2} by $v$ and by $\eta_2$, we deduce as in Step 1 that 
$$
[v, \vartheta, \eta_{1}, \eta_{2}]^\top =0. 
$$
This completes the proof of the proposition. 
\end{proof}

\subsection{Maximal $L^{p}$-$L^{q}$ regularity of the linear system} \label{sec:max-lin-g}
Assume 
\begin{equation}\label{tak1.4}
p,q\in (1,\infty),\quad
\frac{1}{p} + \frac{1}{2q} \neq 1,\quad
\frac{1}{p} + \frac{1}{2q} \neq \frac12.
\end{equation}
Note that \eqref{pqpqpq} implies \eqref{tak1.4}.
In order to show the maximal $L^{p}$-$L^{q}$ regularity of the system \eqref{eq:linear-FSa}--\eqref{eq:linear-FSc},
we first introduce the following decomposition: for any $f\in L^1(\mf)$,
\begin{equation}\label{tak1.8}
f = f_{\textnormal{m}} + f_{\textnormal{avg}}, \quad\text{with}\quad  \int_{\mf} f_{\textnormal{m}} \ \rd y = 0, \quad f_{\textnormal{avg}}= |\mf|^{-1} \int_{\mf} f(y) \ \rd y.
\end{equation}
We use the same decomposition and the same notation for $L^1(\partial \mf)$ and $L^{1}(\ms).$

Let us recall some standard results on the heat equation and on the linearized compressible Navier-Stokes system:
\begin{Lemma}\label{lemdeb}
There exists $\beta_1>0$ such that, for any $\beta\in (0,\beta_1)$ and for any $\eta_{2,\dagger}\in W^{2,4}_{p,q,\beta} ((0,\infty); \ms)$ with
$$
\eta_{2,\dagger}(0,\cdot)\equiv 0,
$$ 
the following linear system
\begin{equation} \label{NSC} 
\left\{
\begin{array}{ll}
\partial_{t} \rho_{\dagger} + \orh \div v_{\dagger} = 0  & \mbox{ in } (0,\infty) \times \mf, \\
\partial_{t} v_{\dagger} -  \dfrac{1}{\orh} \div \overline{\mathbb{T}}(\rho_{\dagger}, v_{\dagger}, 0) = 0  & \mbox{ in } (0,\infty) \times \mf,
\\
v_{\dagger} = \mathcal{T}\eta_{2,\dagger} & \mbox{ on } (0,\infty) \times \partial \mf, 
\\
\rho_{\dagger}(0,\cdot) = 0, \quad v_{\dagger}(0,\cdot) = 0 & \mbox{ in } \mf.
\end{array}
\right.
\end{equation}
admits a unique solution
\begin{gather}
\rho_{\dagger} = \rho_{\dagger,\textnormal{m}} + \rho_{\dagger,\textnormal{avg}},  \quad 
\rho_{\dagger,\textnormal{m}}  \in W^{1,p}_{\beta}(0,\infty;W^{1,q}(\mf)), \quad 
\partial_{t}  \rho_{\dagger,\textnormal{avg}} \in L^{p}_\beta(0,\infty),  \label{regrhod}
\\
v_{\dagger} \in  W^{1,2}_{p,q,\beta}((0,\infty) \times \mf). \label{regvd}
\end{gather}
Moreover, the following estimate holds 
\begin{multline}
\norm{\rho_{\dagger, \textnormal{m}}}_{W^{1,p}_{\beta}(0,\infty;W^{1,q}(\mf))} + \norm{\rho_{\dagger,\textnormal{avg}}}_{L^{\infty}(0, \infty)} +\norm{\partial_{t} \rho_{\dagger,\textnormal{avg}}}_{L^{p}_{\beta}(0, \infty)} \\
+ \norm{v_{\dagger}}_{ W^{1,2}_{p,q, \beta}((0, \infty) \times \mf)}  \leqslant C  \norm{\eta_{2,\dagger}}_{W^{2,4}_{p,q,\beta} ((0,\infty)\times \ms)}.
\end{multline}
\end{Lemma}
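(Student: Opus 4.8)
The plan is to reduce \eqref{NSC} to a problem with homogeneous boundary conditions by subtracting an explicit lift of $\mathcal{T}\eta_{2,\dagger}$, then to split off the spatial average of the density --- which is governed by a scalar ODE and does \emph{not} decay --- and finally to invoke the known weighted maximal $L^{p}$--$L^{q}$ regularity of the linearized compressible Navier--Stokes operator on the mean-zero subspace.

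\textbf{Step 1 (lifting the boundary data).} Using the cut-off $\chi$ from \eqref{geo10-bis}--\eqref{chi-cut-off}, set $w(t,y_{1},y_{2},y_{3}):=\chi(y_{1},y_{2},y_{3})\,\eta_{2,\dagger}(t,y_{1},y_{2})\,e_{3}$. Since $\chi\equiv 1$ near $\Gamma_{S}$ and $\supp\chi$ does not meet $\Gamma_{0}$, we have $w|_{\partial\mf}=\mathcal{T}\eta_{2,\dagger}$; moreover $w(0,\cdot)=0$ because $\eta_{2,\dagger}(0,\cdot)=0$. Only the $L^{p}_{\beta}(W^{2,q})\cap W^{1,p}_{\beta}(L^{q})$-content of $\eta_{2,\dagger}$ enters, whence $\|w\|_{W^{1,2}_{p,q,\beta}((0,\infty)\times\mf)}\le C\,\|\eta_{2,\dagger}\|_{W^{2,4}_{p,q,\beta}((0,\infty)\times\ms)}$. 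Put $u_{\dagger}:=v_{\dagger}-w$; using that $\overline{\mathbb{T}}(\rho,v,0)$ is affine in $(\rho,v)$, the pair $(\rho_{\dagger},u_{\dagger})$ solves the same fluid system with the homogeneous condition $u_{\dagger}|_{\partial\mf}=0$, zero initial data, and source terms $g_{1}:=-\orh\div w$ in the density equation and $g_{2}:=-\partial_{t}w+\tfrac{1}{\orh}\div\overline{\mathbb{T}}(0,w,0)$ in the momentum equation, where $\overline{\mathbb{T}}(0,w,0)=2\mu\mathbb{D}w+\alpha(\div w)I_{3}$. One checks directly that $g_{1}\in L^{p}_{\beta}(0,\infty;W^{1,q}(\mf))$ and $g_{2}\in L^{p}_{\beta}(0,\infty;L^{q}(\mf))^{3}$, both with norm $\le C\,\|\eta_{2,\dagger}\|_{W^{2,4}_{p,q,\beta}}$.

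\textbf{Step 2 (the density average).} Integrating the continuity equation over $\mf$ and using $u_{\dagger}|_{\partial\mf}=0$ and $\rho_{\dagger}(0,\cdot)=0$ gives $\tfrac{d}{dt}\rho_{\dagger,\textnormal{avg}}(t)=|\mf|^{-1}\int_{\mf}g_{1}(t,\cdot)\,\rd y=-|\mf|^{-1}\orh\int_{\ms}\eta_{2,\dagger}(t,\cdot)\,\rd s$, with $\rho_{\dagger,\textnormal{avg}}(0)=0$. As $\eta_{2,\dagger}\in L^{p}_{\beta}(0,\infty;L^{q}(\ms))$ and $\ms$ is bounded, the right-hand side belongs to $L^{p}_{\beta}(0,\infty)$ with norm $\le C\,\|\eta_{2,\dagger}\|_{W^{2,4}_{p,q,\beta}}$; hence $\partial_{t}\rho_{\dagger,\textnormal{avg}}\in L^{p}_{\beta}(0,\infty)$, and integrating in time and applying H\"older's inequality (with $\mathbb{E}_{-\beta}\in L^{p'}(0,\infty)$, which uses $\beta>0$) yields $\rho_{\dagger,\textnormal{avg}}\in L^{\infty}(0,\infty)$ with the same bound.

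\textbf{Step 3 (the mean-zero part and conclusion).} Set $\rho_{\dagger,\textnormal{m}}:=\rho_{\dagger}-\rho_{\dagger,\textnormal{avg}}$. Since $\rho_{\dagger,\textnormal{avg}}$ is constant in space, the momentum equation is unchanged, so $(\rho_{\dagger,\textnormal{m}},u_{\dagger})$ solves the fluid system with source $(g_{1}-\partial_{t}\rho_{\dagger,\textnormal{avg}},\,g_{2})$, whose first component now has zero mean over $\mf$, and with zero initial data; thus the solution stays in the invariant subspace $\{\rho\in W^{1,q}(\mf):\int_{\mf}\rho\,\rd y=0\}\times L^{q}(\mf)^{3}$. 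On this subspace the linearized compressible Navier--Stokes operator with homogeneous Dirichlet condition is $\mr$-sectorial of angle $>\pi/2$ (\cite{ShibaEno13}) and generates an exponentially stable analytic semigroup (the decay following, as in the proof of \cref{th:exp-stab}, from analyticity together with a uniform resolvent bound for $\lambda\in\mathbb{C}^{+}$). Let $\beta_{1}>0$ be its decay rate. For $\beta\in(0,\beta_{1})$, conjugating by $\mathbb{E}_{\beta}$ turns the problem into an unweighted one for the shifted operator, still $\mr$-sectorial of angle $>\pi/2$ and exponentially stable, so \cref{thm:max-reg-g} (with zero initial data) gives $u_{\dagger}\in W^{1,2}_{p,q,\beta}((0,\infty)\times\mf)$ and $\partial_{t}\rho_{\dagger,\textnormal{m}}\in L^{p}_{\beta}(0,\infty;W^{1,q}(\mf))$; together with $\rho_{\dagger,\textnormal{m}}\in C_{b}([0,\infty);W^{1,q}(\mf))$ and $\rho_{\dagger,\textnormal{m}}(0,\cdot)=0$ this gives $\rho_{\dagger,\textnormal{m}}\in W^{1,p}_{\beta}(0,\infty;W^{1,q}(\mf))$. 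Undoing the lift, $v_{\dagger}=u_{\dagger}+w$ and $\rho_{\dagger}=\rho_{\dagger,\textnormal{m}}+\rho_{\dagger,\textnormal{avg}}$ have the claimed regularity and satisfy the stated estimate; uniqueness follows from the well-posedness of the linearized compressible Navier--Stokes operator, since the homogeneous version of \eqref{NSC} ($\eta_{2,\dagger}\equiv 0$) has only the trivial solution.

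\textbf{Main obstacle.} The crux is the non-decaying zero mode of the density: unlike the heat equation, the linearized compressible Navier--Stokes semigroup with homogeneous Dirichlet conditions is \emph{not} exponentially stable on the full space $W^{1,q}(\mf)\times L^{q}(\mf)^{3}$, so one must first isolate the conserved average $\rho_{\dagger,\textnormal{avg}}$ (which obeys only an ODE, hence lies merely in $L^{\infty}$ and in no weighted decaying space) before applying maximal regularity on the mean-zero subspace. The remaining verifications --- that the explicit lift $w$ places $g_{1},g_{2}$ in the correct weighted spaces --- are routine, the $W^{2,4}_{p,q,\beta}$-regularity of $\eta_{2,\dagger}$ being more than the lift requires.
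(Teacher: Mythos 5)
Your proposal is correct and follows essentially the same route as the paper: the same cut-off lift $w$ of $\mathcal{T}\eta_{2,\dagger}$, the same splitting of the density into its spatial average (an ODE quantity lying only in $L^\infty$ with derivative in $L^p_\beta$) and a mean-zero part, and the same appeal to weighted maximal $L^p$--$L^q$ regularity for the linearized compressible Navier--Stokes system on the mean-zero subspace, which the paper obtains directly from \cite[Theorem 2.9]{ShibaEno13} rather than by re-deriving it from $\mathcal{R}$-sectoriality and exponential stability as you sketch. (Incidentally, your sign for the momentum source $+\frac{1}{\orh}\div\overline{\mathbb{T}}(0,w,0)$ is the correct one.)
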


\begin{proof}
Let $\chi$ be the cut-off function defined in \eqref{chi-cut-off} and we define 
\begin{equation*}
w_{\dagger}(t, y_{1}, y_{2}, y_{3}) : = \chi(y_{1}, y_{2}, y_{3}) \eta_{2, \dagger}(t, y_{1}, y_{2})e_3 \quad (t, y) \in (0, \infty) \times \overline{\mf}.
\end{equation*}
Let us set $u_{\dagger} = v_{\dagger} - w_{\dagger}.$ Then $(\rho_{\dagger}, u_{\dagger})$ solves 
\begin{equation} \label{NSC-2} 
\begin{dcases}
\partial_{t} \rho_{\dagger} + \orh \div u_{\dagger} = f_{1, \dagger}  & \mbox{ in } (0,\infty) \times \mf, \\
\partial_{t} u_{\dagger} -  \dfrac{1}{\orh} \div \overline{\mathbb{T}}(\rho_{\dagger}, u_{\dagger}, 0) = f_{2, \dagger}  & \mbox{ in } (0,\infty) \times \mf,
\\
u_{\dagger} = 0 & \mbox{ on } (0,\infty) \times \partial \mf, 
\\
\rho_{\dagger}(0,\cdot) = 0, \quad v_{\dagger}(0,\cdot) = 0 & \mbox{ in } \mf,
\end{dcases}
\end{equation}
where 
\begin{equation*}
f_{1, \dagger} = -\orh \div w_{\dagger}, \quad f_{2, \dagger} = - \partial_{t} w_{\dagger} - \dfrac{1}{\orh} \div \overline{\mathbb{T}}(0, w_{\dagger}, 0).
\end{equation*} 
It is easy to see that 
\begin{equation*}
\norm{f_{1, \dagger}}_{L^{p}_{\beta}(0,\infty;W^{1,q}(\mf))} + \norm{f_{2}}_{L^{p}_{\beta}(0,\infty;L^{q}(\mf))} \leqslant C  \norm{\eta_{2,\dagger}}_{W^{2,4}_{p,q,\beta} ((0,\infty)\times \ms)}, 
\end{equation*}
for any $\beta > 0.$ We look for a solution to the system \eqref{NSC-2} of the form $\rho_{\dagger} = \rho_{\dagger,\textnormal{m}} + \rho_{\dagger,\textnormal{avg}},$ where $(\rho_{\dagger, \textnormal{m}}, u_{\dagger})$ solves the system \eqref{NSC-2} with $f_{1, \dagger}$  replaced by $f_{1,\dagger, \textnormal{m}}$ and $\rho_{\dagger,\textnormal{avg}} = \ds \int_{0}^{t} f_{1,\dagger, \textnormal{avg}} (s) \ \rd s.$ By \cite[Theorem 2.9]{ShibaEno13}, there exists $\beta_{1} > 0$ such that for any $\beta\in (0,\beta_1)$,  
$(f_{1,\dagger, \textnormal{m}}, f_{2, \dagger}) \in L^{p}_{\beta}(0,\infty;W^{1,q}(\mf)) \times L^{p}_{\beta}(0,\infty;L^{q}(\mf)),$  we have 
\begin{equation*}
\norm{\rho_{\dagger, \textnormal{m}}}_{W^{1,p}_{\beta}(0,\infty;W^{1,q}(\mf))} + \norm{v_{\dagger}}_{ W^{1,2}_{p,q, \beta}((0, \infty) \times \mf)}  \leqslant C  \norm{f_{1, \dagger}}_{L^{p}_{\beta}(0,\infty;W^{1,q}(\mf))} + \norm{f_{2}}_{L^{p}_{\beta}(0,\infty;L^{q}(\mf))}.
\end{equation*}
Combining the above estimates we obtain the conclusion of the lemma. 
\end{proof}

Combining Step 3 of the proof of \cref{thm-linear} and \cite[Proposition 6.4]{DenkHieberPruss07}, we deduce the following result:
\begin{Lemma}\label{lemheat}
Assume $\beta>0$. There exists $\gamma_{1} > 0$ such that  
for any 
$$
\vartheta^{0} \in B^{2(1-1/p)}_{q,p}(\mf),
\quad
f_3\in L^{p}_{\beta}(0,\infty;L^{q}(\mf)),
$$
$$
g \in F^{(1-1/q)/2}_{p,q,\beta}(0,\infty;L^{q}(\partial \mf)) \cap 
L^{p}_{\beta}(0,\infty;W^{1-1/q,q}(\partial \mf))
$$
with
$$
\frac{\partial \vartheta^0}{\partial n} = g(0,\cdot) \quad \mbox{ on } \partial \mf,
$$
the following heat equation 
\begin{equation}\label{heatgamma}
\begin{dcases}
\partial_{t} \vartheta_{\sharp} + \gamma_{1} \vartheta_{\sharp} - \overline{\kappa} \Delta \vartheta_{\sharp} =  f_{3}  & \mbox{ in } (0,\infty) \times \mf, \\
\frac{\partial \vartheta_{\sharp}}{\partial n} = g & \mbox{ on } (0, \infty) \times \partial \mf, \\
\vartheta_{\sharp} (0, \cdot) = \vartheta^{0} & \mbox{ in } \mf. 
\end{dcases}
\end{equation}
admits a unique solution $\vartheta_{\sharp} \in W^{1,2}_{p,q,\beta}((0,\infty) ; \mf)$. Moreover, we have the following estimate 
\begin{multline} \label{est:phi0}
\norm{\vartheta_{\sharp}}_{W^{1,2}_{p,q,\beta}((0,\infty) ; \mf)} \leqslant C \Big( \norm{\vartheta^{0}}_{B^{2(1-1/p)}_{q,p}(\mf)} 
+ \norm{f_{3}}_{L^{p}_{\beta}(0,\infty;L^{q}(\mf))} +  \|g \|_{F^{(1-1/q)/2}_{p,q,\beta}(0,\infty;L^{q}(\partial \mf))} 
\\ 
+ \| g \|_{L^{p}_{\beta}(0,\infty;W^{1-1/q,q}(\partial \mf))}\Big). 
\end{multline}
\end{Lemma}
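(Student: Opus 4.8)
The plan is to reduce the exponentially weighted problem on the half-line to an unweighted maximal $L^p$-$L^q$ regularity statement for a shifted Neumann heat operator, and then to import the boundary-data analysis already carried out in Step~3 of the proof of \cref{thm-linear}. First I would remove the weight by the substitution $\psi := \mathbb{E}_{\beta}\vartheta_{\sharp}$. Since $\partial_t\psi = \mathbb{E}_{\beta}(\partial_t\vartheta_{\sharp}+\beta\vartheta_{\sharp})$, the system \eqref{heatgamma} is equivalent to
\begin{equation*}
\begin{dcases}
\partial_{t}\psi + (\gamma_{1}-\beta)\psi - \overline{\kappa}\Delta\psi = \mathbb{E}_{\beta}f_{3} & \text{in }(0,\infty)\times\mf,\\
\frac{\partial\psi}{\partial n} = \mathbb{E}_{\beta}g & \text{on }(0,\infty)\times\partial\mf,\\
\psi(0,\cdot) = \vartheta^{0} & \text{in }\mf,
\end{dcases}
\end{equation*}
and, by the very definition of the weighted spaces, $\mathbb{E}_{\beta}f_{3}\in L^{p}(0,\infty;L^{q}(\mf))$, $\mathbb{E}_{\beta}g\in F^{(1-1/q)/2}_{p,q}(0,\infty;L^{q}(\partial\mf))\cap L^{p}(0,\infty;W^{1-1/q,q}(\partial\mf))$, with norms equal to the weighted norms of $f_{3}$ and $g$, while $\vartheta_{\sharp}\in W^{1,2}_{p,q,\beta}((0,\infty);\mf)$ if and only if $\psi\in W^{1,2}_{p,q}((0,\infty);\mf)$, with equivalent norms. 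Thus it suffices to prove the unweighted maximal regularity of $\psi$ and the corresponding estimate for a suitable choice of $\gamma_{0}:=\gamma_{1}-\beta>0$.

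Next I would fix $\gamma_{0}>0$ large enough that the shifted Neumann Laplacian $A_{\vartheta}-\gamma_{0}$, with $A_{\vartheta},\mathcal{D}(A_{\vartheta})$ as in \eqref{Atheta}, is $\mathcal{R}$-sectorial of angle $>\pi/2$ in $L^{q}(\mf)$; this is exactly the assertion on $A_{\vartheta}-\gamma$ recalled in the proof of \cref{th:R-sec} (via \cite[Theorem 8.2]{DenkHieberPruss}). Since $\sigma(A_{\vartheta})\subset(-\infty,0]$, the spectrum of $A_{\vartheta}-\gamma_{0}$ lies in $(-\infty,-\gamma_{0}]$, so it generates an analytic semigroup of negative exponential type, and then $\gamma_{1}:=\beta+\gamma_{0}$. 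In the case $g\equiv0$, \cref{thm:max-reg-g} gives, for $\vartheta^{0}$ in the trace space $(L^{q}(\mf),\mathcal{D}(A_{\vartheta}))_{1-1/p,p}$ — which equals $\{\vartheta^{0}\in B^{2(1-1/p)}_{q,p}(\mf):\partial_{n}\vartheta^{0}=0 \text{ when } 1/p+1/(2q)<1/2\}$ by \eqref{tak1.4} — a unique solution in $W^{1,2}_{p,q}((0,\infty);\mf)$ with the corresponding bound. To handle the inhomogeneous Neumann datum $\mathbb{E}_{\beta}g$, I would lift it: using the trace characterisation of \cite[Proposition 6.4]{DenkHieberPruss07} (the same result invoked in Step~3 of \cref{thm-linear}) one constructs $\Theta\in W^{1,2}_{p,q}((0,\infty);\mf)$ with $\partial_{n}\Theta=\mathbb{E}_{\beta}g$ on $(0,\infty)\times\partial\mf$, whose initial normal trace equals $\partial_{n}\vartheta^{0}$, and
\begin{equation*}
\|\Theta\|_{W^{1,2}_{p,q}((0,\infty);\mf)} \leqslant C\Big(\|\mathbb{E}_{\beta}g\|_{F^{(1-1/q)/2}_{p,q}(0,\infty;L^{q}(\partial\mf))} + \|\mathbb{E}_{\beta}g\|_{L^{p}(0,\infty;W^{1-1/q,q}(\partial\mf))}\Big).
\end{equation*}
Applying the $g\equiv0$ result to $\psi-\Theta$, with initial datum $\vartheta^{0}-\Theta(0,\cdot)$ (which is admissible precisely because of the compatibility condition $\partial_{n}\vartheta^{0}=g(0,\cdot)$, itself meaningful only when $1/p+1/(2q)<1/2$) and right-hand side $\mathbb{E}_{\beta}f_{3}-(\partial_{t}\Theta+\gamma_{0}\Theta-\overline{\kappa}\Delta\Theta)$, produces $\psi$ together with its estimate; undoing the substitution $\vartheta_{\sharp}=\mathbb{E}_{-\beta}\psi$ and using the norm equivalences yields \eqref{est:phi0}, and uniqueness descends from uniqueness in the homogeneous-boundary case.

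The main obstacle is this last point: matching the inhomogeneous Neumann boundary data with the correct trace and compatibility conditions in the $L^{p}$-$L^{q}$ setting on the half-line, i.e. arranging that $\vartheta^{0}-\Theta(0,\cdot)$ lands in exactly $(L^{q}(\mf),\mathcal{D}(A_{\vartheta}))_{1-1/p,p}$, which is where the hypothesis $1/p+1/(2q)\neq1/2$ and the compatibility condition enter. An alternative that avoids the explicit lifting is to apply directly the inhomogeneous-boundary maximal regularity of \cite[Theorem 2.3]{DenkHieberPruss07} to the $\psi$-system after the exponential shift (as in Step~3 of \cref{thm-linear}): one checks that the positive zero-order term $\gamma_{0}\psi$ does not affect conditions \textbf{(E)}, \textbf{(LS)}, \textbf{(SD1)}, \textbf{(SB1)}, obtains the estimate on $(0,T)$ uniformly in $T$ by the argument of \cite[Proposition 2.2]{HMTT}, and passes to the limit $T\to\infty$, the weight already guaranteeing integrability on the whole half-line.
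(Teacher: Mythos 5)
Your proposal is correct and follows essentially the same route as the paper, whose proof consists precisely of conjugating by the exponential weight and then invoking the maximal $L^{p}$-$L^{q}$ regularity for the Neumann heat problem established in Step~3 of the proof of \cref{thm-linear} together with the trace/lifting result \cite[Proposition 6.4]{DenkHieberPruss07}; the role of $\gamma_{1}$ is exactly as you describe, namely to dominate $\beta$ and render the shifted operator exponentially stable so that the estimate survives on the whole half-line. Both of your variants (explicit Neumann lifting plus the abstract Cauchy problem, or direct application of \cite[Theorem 2.3]{DenkHieberPruss07} with the $T$-uniform constant of \cite[Proposition 2.2]{HMTT}) are consistent with what the authors do.
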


We consider the subset of initial conditions 
\begin{multline}\label{Jpq}
\mathcal{J}_{p,q}
:=
\Bigg\{ \vect{\rho^{0}, v^{0}, \vartheta^{0}, \eta_{1}^{0}, \eta_{2}^{0}}  \in W^{1,q}(\mf) \times  B^{2(1-1/p)}_{q,p}(\mf)^{3} \times 
 B^{2(1-1/p)}_{q,p}(\mf) \times B^{2(2-1/p)}_{q,p}(\ms) \times B^{2(1-1/p)}_{q,p}(\ms) 
 \\
 \eta_{1}^{0} = \nabla_{s} \eta_{1}^{0} \cdot n_{S} = 0  \quad\mbox{ on } \ms, 
 \\
v^{0} = \mathcal{T} \eta_{2}^{0}  \mbox{ on } \partial \mf \quad 
	\text{and} \quad \eta_{2}^{0} = 0  \mbox{ on } \partial \ms \quad \mbox{if} \quad \frac{1}{p} + \frac{1}{2q} < 1, \\
\nabla_{s}\eta_{2}^{0} \cdot n_{S} = 0  \mbox{ on } \partial \ms \quad \mbox{if}  \quad \frac{1}{p} + \frac{1}{2q} < \frac{1}{2}
\Bigg\}
\end{multline}
with 
\begin{multline*}
\norm{\vect{\rho^{0}, v^{0}, \vartheta^{0}, \eta_{1}^{0}, \eta_{2}^{0}}}_{\mathcal{J}_{p,q}} := \norm{\rho^{0}}_{W^{1,q}(\mf)} + \norm{v^{0}}_{B^{2(1-1/p)}_{q,p}(\mf)^{3}} +  \norm{\vartheta^{0}}_{B^{2(1-1/p)}_{q,p}(\mf)} \\
+  \norm{\eta_{1}^{0}}_{B^{2(2-1/p)}_{q,p}(\ms)} + 
 \norm{\eta_{2}^{0}}_{B^{2(1-1/p)}_{q,p}(\ms)}.
\end{multline*}
We also consider the following subset for the source terms: 
\begin{multline}\label{com0.5-deuze}
\mathcal{R}_{p,q,\beta}^{cc} = \Big\{ [f_{1}, f_{2}, f_{3}, g, \widetilde{h}, \widehat{h}]^\top
\mid f_{1} \in  L^{p}_\beta(0,\infty,W^{1,q}(\mf)), f_{2} \in  L^{p}_\beta(0,\infty;L^{q}(\mf))^{3}, \\ 
f_{3} \in  L^{p}_\infty(0,\infty;L^{q}(\mf)),
g \in F^{(1-1/q)/2}_{p,q,\beta}(0,\infty;L^{q}(\partial \mf)) \cap L^{p}_\beta(0,\infty;W^{1-1/q,q}(\partial \mf)), \\ 
\widehat{h}\in L^{\infty}(0,\infty),\quad
\partial_t \widehat{h}\in L^p_{\beta}(0,\infty),\quad
\widetilde{h} \in L^{p}_\beta(0,\infty;L^{q}(\ms)),   
\\
\mbox{with}\ g(0,\cdot) = \frac{\partial \vartheta_{0}}{\partial n}  \mbox{ if } \displaystyle \frac{1}{p} + \frac{1}{2q} < \frac{1}{2} \Big\},
\end{multline}
with
\begin{multline*}
\|[f_{1}, f_{2}, f_{3}, g, \widetilde{h}, \widehat{h}]^\top\|_{\mathcal{R}_{p,q,\beta}^{cc}} 
=  
\|f_{1}\|_{L^{p}_\beta(0,\infty;W^{1,q}(\mf))} + \|f_{2}\|_{L^{p}_\beta(0,\infty;L^{q}(\mf))^{3}}   
+ \|f_{3}\|_{L^{p}_\beta(0,\infty;L^{q}(\mf))} \\ 
+ \|g\|_{F^{(1-1/q)/2}_{p,q,\beta}(0,\infty;L^{q}(\partial \mf)) \cap L^{p}_\beta(0,\infty;W^{1-1/q,q}(\partial \mf))} 
\\
+ \|\widetilde{h}\|_{L^{p}_\beta(0,\infty;L^{q}(\ms))}
+\| \widehat{h}\|_{L^{\infty}(0,\infty)}
+\| \partial_t \widehat{h}\|_{L^p_{\beta}(0,\infty)}. 
\end{multline*}

We take $\beta=\min(\beta_0,\beta_1)>0$ where $\beta_0$  is the constant in \cref{th:exp-stab}
and where $\beta_1$  is the constant in \cref{lemdeb}.
We decompose the solution of the system \eqref{eq:linear-FSa}--\eqref{eq:linear-FSc} as follows
\begin{equation}\label{deco++}
\rho=\rhob+\rho_{\diamond}+\rho_{\dagger},
\quad
v=v_{\diamond}+v_{\dagger},
\quad
\vartheta=\vartheta_{\diamond}+\vartheta_{\sharp}+\vartheta_{\flat},
\quad
\eta_1=\eta_{1,\diamond}+\eta_{1,\dagger},
\quad
\eta_2=\eta_{2,\diamond}+\eta_{2,\dagger},
\end{equation}
where $\vartheta_{\sharp}$ is the solution of \eqref{heatgamma}
given by \cref{lemheat}, where
\begin{equation}\label{flatflat}
\vartheta_{\flat}(t) := \int_0^t  \gamma_{1} \vartheta_{\sharp,\textnormal{avg}}(r) \ \rd r,
\quad
\rhob(t) = \frac{1}{|\mf|} \left( \int_{\mf} \rho^{0}\ \rd y + \orh \int_{\ms} \eta_{1}^{0} \ \rd s\right)+\int_{0}^{t} f_{1, \textnormal{avg}} (r) \ \rd r.
\end{equation}
where $[\rho_{\diamond},v_{\diamond},\vartheta_{\diamond},\eta_{1,\diamond},\eta_{2,\diamond}]^\top$
is solution of the following system
\begin{equation} \label{eq:linear-FS-2a} 
\left\{
\begin{array}{ll}
\partial_{t} \rho_{\diamond} + \orh \div v_{\diamond} = f_{1,\textnormal{m}}  & \mbox{ in } (0,\infty) \times \mf, \\
\partial_{t} v_{\diamond} -  \dfrac{1}{\orh} \div \overline{\mathbb{T}}(\rho_{\diamond}, v_{\diamond}, \vartheta_{\diamond}) 
	= f_2-R_0\nabla \vartheta_{\sharp} & \mbox{ in } (0,\infty) \times \mf, \\
\partial_{t} \vartheta_{\diamond} - \overline{\kappa} \Delta \vartheta_{\diamond} = \gamma_1 \vartheta_{\sharp,\textnormal{m}} & \mbox{ in } (0,\infty) \times \mf, \\
\partial_t \eta_{1,\diamond}-\eta_{2,\diamond}=0& \mbox{ in } (0,\infty) \times \mf, \\
\partial_{t} \eta_{2,\diamond} + \Delta_{s}^{2} \eta_{1,\diamond} - \Delta_{s} \eta_{2,\diamond} =  - \overline{\mathbb{T}}(\rho_{\diamond} , v_{\diamond} , \vartheta_{\diamond} )e_{3} \cdot e_{3} 
+  \widetilde{h} +R_0 \orh {\vartheta_{\sharp}}_{|\ms}& \mbox{ in } (0,\infty) \times \ms,
\end{array}
\right.
\end{equation}
\begin{equation} \label{eq:linear-FS-2b} 
\left\{
\begin{array}{ll}
v_{\diamond} = \mathcal{T}\eta_{2,\diamond} & \mbox{ on } (0,\infty) \times \partial \mf, \\
\dfrac{\partial \vartheta_{\diamond}}{\partial n} = 0& \mbox{ on } (0,\infty) \times \partial \mf,\\
\eta_{1,\diamond} = \nabla_{s} \eta_{1,\diamond} \cdot n_{S} = 0 & \mbox{ on } (0,\infty) \times \partial \ms,
\end{array}
\right.
\end{equation}
\begin{equation} \label{eq:linear-FS-2c} 
\left\{
\begin{array}{ll}
\eta_{1,\diamond}(0,\cdot) = \eta_{1}^{0}, \quad \eta_{2,\diamond}(0,\cdot) = \eta_{2}^{0} & \mbox{ in } \ms, \\
\rho_{\diamond}(0,\cdot) = \ds \rho^{0} - \rhob(0), \quad v_{\diamond}(0,\cdot) = v^{0}, \quad \vartheta_{\diamond}(0,\cdot) = 0 & \mbox{ in } \mf.
\end{array}
\right.
\end{equation}
and where $[\rho_{\dagger},v_{\dagger},\vartheta_{\dagger},\eta_{1,\dagger},\eta_{2,\dagger}]^\top$
is solution of the following system
\begin{equation} \label{eq:linear-FS-3a} 
\left\{
\begin{array}{ll}
\partial_{t} \rho_{\dagger} + \orh \div v_{\dagger} = 0  & \mbox{ in } (0,\infty) \times \mf, \\
\partial_{t} v_{\dagger} -  \dfrac{1}{\orh} \div \overline{\mathbb{T}}(\rho_{\dagger}, v_{\dagger}, \vartheta_{\dagger}) = 0  & \mbox{ in } (0,\infty) \times \mf, \\
\partial_{t} \vartheta_{\dagger} - \overline{\kappa} \Delta \vartheta_{\dagger} = 0 & \mbox{ in } (0,\infty) \times \mf, \\
\partial_t \eta_{1,\dagger}-\eta_{2,\dagger}=0& \mbox{ in } (0,\infty) \times \mf, \\
\partial_{t} \eta_{2,\dagger} + \Delta_{s}^{2} \eta_{1,\dagger} - \Delta_{s} \eta_{2,\dagger} =  - \overline{\mathbb{T}}(\rho_{\dagger} , v_{\dagger} , \vartheta_{\dagger} )e_{3} \cdot e_{3} 
+ \widehat{h}+R_0 \ot \rhob + R_0 \orh {\vartheta_{\flat}}
& \mbox{ in } (0,\infty) \times \ms,
\end{array}
\right.
\end{equation}
\begin{equation} \label{eq:linear-FS-3b} 
\left\{
\begin{array}{ll}
v_{\dagger} = \mathcal{T}\eta_{2,\dagger} & \mbox{ on } (0,\infty) \times \partial \mf, \\
\dfrac{\partial \vartheta_{\dagger}}{\partial n} = 0& \mbox{ on } (0,\infty) \times \partial \mf,\\
\eta_{1,\dagger} = \nabla_{s} \eta_{1,\dagger} \cdot n_{S} = 0 & \mbox{ on } (0,\infty) \times \partial \ms,
\end{array}
\right.
\end{equation}
\begin{equation} \label{eq:linear-FS-3c} 
\left\{
\begin{array}{ll}
\eta_{1,\dagger}(0,\cdot) = 0, \quad \eta_{2,\dagger}(0,\cdot) = 0 & \mbox{ in } \ms, \\
\rho_{\dagger}(0,\cdot) = 0, \quad v_{\dagger}(0,\cdot) = 0, \quad \vartheta_{\dagger}(0,\cdot) = 0 & \mbox{ in } \mf.
\end{array}
\right.
\end{equation}
Let us show that the decomposition \eqref{deco++} is valid.
First, we can check that 
$$
\vartheta_{\flat}, \rhob \in C^{0}_b([0, \infty)), \quad \partial_{t} \vartheta_{\flat}, \partial_t\rhob \in L^{p}_{\beta}(0, \infty).
$$
Second, for the system \eqref{eq:linear-FS-2a}--\eqref{eq:linear-FS-2c}, we
note that from \eqref{eq:Xm} and \eqref{Jpq}
$$\Big[f_{1,\textnormal{m}},   f_{2} - R_0\nabla \vartheta_{\sharp},  \gamma_{1} \vartheta_{\sharp,\textnormal{m}}, 0,  \widetilde{h} +R_0 \orh {\vartheta_{\sharp}}_{|\ms} \Big]^{\top} \in L^{p}_{\beta}(0, \infty;\mx_{\textnormal{m}}),$$
\begin{equation*}
\Big[\rho^{0}  -  \rhob(0), v^{0}, 0, \eta_{1}^{0}, \eta_{2}^{0} \Big]^{\top} \in  \left(\mx_{\textnormal{m}}, \mathcal{D}(\mathcal{A}_{FS}) \right)_{1-1/p,p}. 
\end{equation*}
From \cref{th:R-sec} and \cref{th:exp-stab} we know that $\mathcal{A}_{FS}+\beta I$ is a $\mr$-sectorial operator on $\mx_{\textnormal{m}}$ and generates an analytic exponential stable semigroup on $\mx_{\textnormal{m}}.$  Therefore, by \cref{thm:max-reg-g}, the system \eqref{eq:linear-FS-2a}--\eqref{eq:linear-FS-2c} admits a unique solution 
\begin{equation}
\Big[ \rho_{\diamond},v_{\diamond},\vartheta_{\diamond},\eta_{1,\diamond},\eta_{2,\diamond} \Big]^{\top} \in 
L^{p}_{\beta}(0, \infty;\mathcal{D}(\mathcal{A}_{FS}) \cap \mx_{\textnormal{m}}) \cap W^{1,p}_{\beta}(0,\infty; \mx_{\textnormal{m}}). 
\end{equation}
Finally, let us consider the system \eqref{eq:linear-FS-3a}--\eqref{eq:linear-FS-3c}.
Note that $\vartheta_{\dagger} \equiv 0$. Moreover 
\begin{equation} 
\frac{d}{dt} \begin{bmatrix}
\partial_{t}\rho_{\dagger} \\ \partial_{t} v_{\dagger} \\ \partial_{t}\vartheta_{\dagger} \\ \partial_{t} {\eta}_{1,\dagger} \\ \partial_{t} {\eta}_{2,\dagger}
\end{bmatrix} = \mathcal{A}_{FS} \begin{bmatrix}
\partial_{t}\rho_{\dagger} \\ \partial_{t} v_{\dagger} \\ \partial_{t}\vartheta_{\dagger} \\ \partial_{t} {\eta}_{1,\dagger} \\ \partial_{t} {\eta}_{2,\dagger}
\end{bmatrix} 
+ \begin{bmatrix} 0 \\ 0 \\ 0 \\ 0 \\ \partial_t\widehat{h}+R_0 \ot f_{1, \textnormal{avg}} + R_0 \orh \gamma_{1} \vartheta_{\sharp,\textnormal{avg}} \end{bmatrix}
, \quad \begin{bmatrix}
\partial_{t}\rho_{\dagger} \\ \partial_{t} v_{\dagger} \\ \partial_{t}\vartheta_{\dagger} \\ \partial_{t} {\eta}_{1,\dagger} \\ \partial_{t} {\eta}_{2,\dagger}
\end{bmatrix} (0) = \begin{bmatrix}
0\\ 0 \\ 0 \\ 0 \\ 0
\end{bmatrix}.
\end{equation}
Using that $\partial_t\widehat{h}+R_0 \ot f_{1, \textnormal{avg}} + R_0 \orh \gamma_{1} \vartheta_{\sharp,\textnormal{avg}} \in L^{p}_{\beta}(0,\infty),$
and combining as above 
\cref{th:R-sec}, \cref{th:exp-stab} and \cref{thm:max-reg-g}, we infer that 
\begin{equation}
\Big[ \partial_{t}\rho_{\dagger}, \partial_{t} v_{\dagger},  \partial_{t}\vartheta_{\dagger},  \partial_{t} {\eta}_{1,\dagger},  \partial_{t} {\eta}_{2,\dagger} \Big]^{\top} 
\in L^{p}_{\beta}(0, \infty;\mathcal{D}(\mathcal{A}_{FS}) \cap \mx_{\textnormal{m}}) \cap W^{1,p}_{\beta}(0,\infty; \mx_{\textnormal{m}}). 
\end{equation}
In particular,  
\begin{equation}
{\eta}_{2,\dagger} \in W^{2,4}_{p,q,\beta} ((0,\infty); \ms). 
\end{equation}
Then, we use \cref{lemdeb} to deduce $(\rho_\dagger,v_\dagger)$ satisfies \eqref{regrhod}--\eqref{regvd}.

Let us also write 
\begin{equation}\label{deco+++}
\widetilde{\rho} = \rho_{\diamond}+\rho_{\dagger, \textnormal{m}}, \quad \widehat{\rho} = \rhob + \rho_{\dagger, \textnormal{avg}}, \quad \widetilde{\vartheta} =\vartheta_{\diamond}+\vartheta_{\sharp}, \quad \widehat{\vartheta} = \vartheta_{\flat},
\end{equation}
so that
\begin{equation}\label{decomp-rt}
\rho= \widetilde{\rho} + \widehat{\rho}, \quad \vartheta = \widetilde{\vartheta} + \widehat{\vartheta}.
\end{equation}

Gathering the above properties, we have obtained the following theorem:
\begin{Theorem} \label{th:lin-lplq-global}
Assume \eqref{tak1.4}. There exists $\beta>0$
such that for any 
$$
[\rho^{0}, v^{0}, \vartheta^{0}, \eta_{1}^{0}, \eta_{2}^{0}]^\top \in \mathcal{J}_{p,q},
\quad
[f_{1}, f_{2}, f_{3}, g, \widetilde{h}, \widehat{h}]^\top \in \mathcal{R}_{p, q, \beta}^{cc},
\quad
h=\widetilde{h}+\widehat{h},
$$
the system \eqref{eq:linear-FSa}--\eqref{eq:linear-FSc} 
admits a unique solution satisfying \eqref{reg1}--\eqref{reg4} and
\begin{multline}
\|\rho\|_{L^\infty(0,\infty;W^{1,q}(\mf))} 
+
\|\nabla \rho\|_{W^{1,p}_{\beta}(0,\infty;L^{q}(\mf))^{3}} 
+
\|\partial_t \rho\|_{L^{p}_{\beta}(0,\infty;W^{1,q}(\mf))} 
+\|v\|_{W^{1,2}_{p,q,\beta} ((0,\infty) ; \mf)^{3}}    
\\
+ \norm{\vartheta}_{L^{\infty}(0,\infty;B^{2(1-1/p)}_{q,p}(\mf))}
+ \|\nabla \vartheta\|_{L^{p}_{\beta}(0,\infty ; W^{1,q}(\mf))^{3}}   
+ \|\partial_t \vartheta\|_{L^{p}_{\beta}(0,\infty ; L^q(\mf))}   
\\
+ \|\eta_1\|_{L^{\infty}(0,\infty;B^{2(2-1/p)}_{q,p}(\ms))}
+ \|\eta_2\|_{W^{1,2}_{p,q,\beta}((0,\infty) ; \ms)}
 \\ 
 \leqslant C_{L} 
 \Big(  \norm{[\rho^{0}, v^{0}, \vartheta^{0}, \eta_{1}^{0}, \eta_{2}^{0}]^\top}_{\mathcal{J}_{p,q}}  
 +  \norm{[f_{1}, f_{2}, f_{3}, g,  \widetilde{h}, \widehat{h}]^\top}_{\mathcal{R}_{p, q, \beta}^{cc}} 
 \Big). 
\end{multline}
Moreover, we can decompose the solution as \eqref{deco+++}-\eqref{decomp-rt}, with 
\begin{gather}
\widetilde{\rho} \in W^{1,p}_{\beta}(0,\infty;W^{1,q}(\mf)), \; \widetilde{\vartheta} \in W^{1,2}_{p,q,\beta}((0,\infty);\mf), \; \vect{\widehat{\rho}, \widehat{\vartheta}} \in L^{\infty}(0,\infty)^{2}, \; \vect{\partial_{t}\widehat{\rho}, \partial_{t}\widehat{\vartheta}} \in L^{p}_{\beta}(0,\infty)^{2}, 
\end{gather}
and 
\begin{multline}
\norm{\widetilde{\rho}}_{W^{1,p}_{\beta}(0,\infty;W^{1,q}(\mf))} + \norm{\widetilde{\vartheta}}_{W^{1,2}_{p,q,\beta}((0,\infty);\mf)} + \norm{\vect{\widehat{\rho}, \widehat{\vartheta}}}_{L^{\infty}(0,\infty)^{2}} + \norm{\vect{\partial_{t}\widehat{\rho}, \partial_{t}\widehat{\vartheta}}}_{L^{p}_{\beta}(0,\infty)^{2}} \\
\leq C_{L} 
 \Big(  \norm{[\rho^{0}, v^{0}, \vartheta^{0}, \eta_{1}^{0}, \eta_{2}^{0}]^\top}_{\mathcal{J}_{p,q}}  
 +  \norm{[f_{1}, f_{2}, f_{3}, g, \widetilde{h}, \widehat{h}]^\top}_{\mathcal{R}_{p, q, \beta}^{cc}} 
 \Big). 
\end{multline}
\end{Theorem}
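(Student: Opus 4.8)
The plan is to build the solution through the additive decomposition \eqref{deco++}, which is tailored so that each summand solves one of the subproblems already disposed of above; the proof then reduces to checking that the data of each subproblem lie in the hypothesis class of the corresponding lemma and to adding up the resulting estimates. I would carry this out in the order in which the blocks are coupled.

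First, dispose of the inhomogeneous thermal boundary data: apply \cref{lemheat} to $(\vartheta^{0},f_{3},g)$, the compatibility $g(0,\cdot)=\frac{\partial\vartheta^{0}}{\partial n}$ needed when $\frac{1}{p}+\frac{1}{2q}<\frac{1}{2}$ being exactly the one imposed in $\mathcal{R}^{cc}_{p,q,\beta}$; this produces $\vartheta_{\sharp}\in W^{1,2}_{p,q,\beta}((0,\infty);\mf)$ with the bound \eqref{est:phi0}, and in particular $\vartheta_{\sharp,\textnormal{avg}}\in L^{p}_{\beta}(0,\infty)$ by estimating the spatial average by the $L^{q}(\mf)$-norm. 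Then define the two non-decaying scalar modes $\rhob$ and $\thetab$ by the time integrals \eqref{flatflat}; since $f_{1,\textnormal{avg}},\vartheta_{\sharp,\textnormal{avg}}\in L^{p}_{\beta}(0,\infty)$, H\"older in time gives $\rhob,\thetab\in C^{0}_{b}([0,\infty))$ with $\partial_{t}\rhob,\partial_{t}\thetab\in L^{p}_{\beta}(0,\infty)$. These are the modes that the zeroth-order term $\gamma_{1}\vartheta_{\sharp}$ inserted in \eqref{heatgamma} was designed to absorb. Second, solve the decaying coupled block \eqref{eq:linear-FS-2a}--\eqref{eq:linear-FS-2c}: one checks that the source $\vect{f_{1,\textnormal{m}},\,f_{2}-R_{0}\nabla\vartheta_{\sharp},\,\gamma_{1}\vartheta_{\sharp,\textnormal{m}},\,0,\,\widetilde{h}+R_{0}\orh\vartheta_{\sharp}|_{\ms}}$ belongs to $L^{p}_{\beta}(0,\infty;\mx_{\textnormal{m}})$ — the constraints defining $\mx_{\textnormal{m}}$ in \eqref{eq:Xm} hold because the $\rho$- and $\vartheta$-slot sources have zero spatial mean and the $\eta_{1}$-slot source vanishes — and that $\vect{\rho^{0}-\rhob(0),v^{0},0,\eta_{1}^{0},\eta_{2}^{0}}\in(\mx_{\textnormal{m}},\mathcal{D}(\mathcal{A}_{FS}))_{1-1/p,p}$, where membership in $\mx_{\textnormal{m}}$ is precisely why $\rhob(0)$ was chosen as in \eqref{flatflat} and where the trace/compatibility relations in $\mathcal{J}_{p,q}$ (notably $v^{0}=\mathcal{T}\eta_{2}^{0}$) enter. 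By \cref{th:R-sec} and \cref{th:exp-stab}, $\mathcal{A}_{FS}+\beta I$ is $\mr$-sectorial of angle $>\pi/2$ and generates an exponentially stable semigroup on $\mx_{\textnormal{m}}$, so \cref{thm:max-reg-g} yields a unique $\vect{\rho_{\diamond},v_{\diamond},\vartheta_{\diamond},\eta_{1,\diamond},\eta_{2,\diamond}}\in L^{p}_{\beta}(0,\infty;\mathcal{D}(\mathcal{A}_{FS})\cap\mx_{\textnormal{m}})\cap W^{1,p}_{\beta}(0,\infty;\mx_{\textnormal{m}})$ with the corresponding estimate.

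The main obstacle is the last block \eqref{eq:linear-FS-3a}--\eqref{eq:linear-FS-3c}, whose plate forcing $\widehat{h}+R_{0}\ot\rhob+R_{0}\orh\thetab$ is merely bounded, so maximal $L^{p}_{\beta}$ regularity cannot be applied to it directly. The remedy, already indicated in the construction, is to differentiate in time: since all initial data vanish, $\vect{\partial_{t}\rho_{\dagger},\partial_{t}v_{\dagger},\partial_{t}\vartheta_{\dagger},\partial_{t}\eta_{1,\dagger},\partial_{t}\eta_{2,\dagger}}$ solves the same abstract Cauchy problem with zero initial datum and forcing $\vect{0,0,0,0,\partial_{t}\widehat{h}+R_{0}\ot f_{1,\textnormal{avg}}+R_{0}\orh\gamma_{1}\vartheta_{\sharp,\textnormal{avg}}}\in L^{p}_{\beta}(0,\infty;\mx_{\textnormal{m}})$; a second application of \cref{th:R-sec}, \cref{th:exp-stab} and \cref{thm:max-reg-g} puts $\partial_{t}$ of the dagger solution in $L^{p}_{\beta}(0,\infty;\mathcal{D}(\mathcal{A}_{FS})\cap\mx_{\textnormal{m}})\cap W^{1,p}_{\beta}(0,\infty;\mx_{\textnormal{m}})$, whence $\eta_{2,\dagger}\in W^{2,4}_{p,q,\beta}((0,\infty);\ms)$ with $\eta_{2,\dagger}(0,\cdot)=0$. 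Feeding this $\eta_{2,\dagger}$ into \cref{lemdeb} then produces $\rho_{\dagger}=\rho_{\dagger,\textnormal{m}}+\rho_{\dagger,\textnormal{avg}}$ satisfying \eqref{regrhod} and $v_{\dagger}\in W^{1,2}_{p,q,\beta}((0,\infty)\times\mf)$ satisfying \eqref{regvd}, while $\vartheta_{\dagger}\equiv0$ since its equation is homogeneous.

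It then remains to set $\widetilde{\rho}=\rho_{\diamond}+\rho_{\dagger,\textnormal{m}}$, $\widehat{\rho}=\rhob+\rho_{\dagger,\textnormal{avg}}$, $\widetilde{\vartheta}=\vartheta_{\diamond}+\vartheta_{\sharp}$, $\widehat{\vartheta}=\thetab$ as in \eqref{deco+++}: one verifies by construction that $\rho=\widetilde{\rho}+\widehat{\rho}$, $v=v_{\diamond}+v_{\dagger}$, $\vartheta=\widetilde{\vartheta}+\widehat{\vartheta}$, $\eta_{1}=\eta_{1,\diamond}+\eta_{1,\dagger}$, $\eta_{2}=\eta_{2,\diamond}+\eta_{2,\dagger}$ solves \eqref{eq:linear-FSa}--\eqref{eq:linear-FSc} with the regularity \eqref{reg1}--\eqref{reg4} and the decomposition \eqref{deco+++}--\eqref{decomp-rt}, and the global estimate is the sum of the bounds from \cref{lemheat}, the two applications of \cref{thm:max-reg-g}, and \cref{lemdeb}. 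Uniqueness follows because a solution of the homogeneous system ($\mathcal{J}_{p,q}$- and $\mathcal{R}^{cc}_{p,q,\beta}$-data all zero) must vanish, either by uniqueness in each of the auxiliary problems or, more directly, by the energy argument used in Step~1 of the proof of \cref{th:exp-stab}. I expect this last block — the time-differentiation device handling the non-decaying averaged modes $\rhob,\thetab$, and the subsequent transfer of regularity back through \cref{lemdeb} — to be the delicate point; the remainder is bookkeeping matching data to the already-established statements.
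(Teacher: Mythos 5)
Your proposal follows essentially the same route as the paper: the same decomposition into the $\sharp$, $\flat$, $\diamond$ and $\dagger$ blocks, the same use of \cref{lemheat}, the maximal regularity of $\mathcal{A}_{FS}$ on $\mx_{\textnormal{m}}$ (\cref{th:R-sec}, \cref{th:exp-stab}, \cref{thm:max-reg-g}), the time-differentiation device for the non-decaying plate forcing in the $\dagger$ system, and the transfer back through \cref{lemdeb}. The bookkeeping (mean-zero checks for $\mx_{\textnormal{m}}$, the choice of $\rhob(0)$, $\vartheta_{\dagger}\equiv 0$) matches the paper's argument, so the proof is correct as proposed.
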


\subsection{Proof of \cref{th:global-new}} \label{sec:36}
In this subsection, we prove \cref{th:global-new} (or equivalently \cref{th:global-main}): 
we show the existence and uniqueness of global in time solutions for the system \eqref{eq:global-01a}--\eqref{H-G} 
under a smallness assumption on the initial data. 

Let us assume the hypotheses of \cref{th:global-new}, with $\beta$ given by \cref{th:lin-lplq-global}. 
Assume
$$
[\rho^{0}, v^{0}, \vartheta^{0}, \eta_{1}^{0}, \eta_{2}^{0}]^\top\in \mathcal{J}_{p,q},
$$ 
where $\mathcal{J}_{p,q}$ is defined by \eqref{Jpq}.
For $R > 0,$ we define $\mathcal{B}_{R}$ as follows 
\begin{equation}
\mathcal{B}_{R} = \Big\{ [f_{1}, f_{2}, f_{3}, g, \widetilde{h}, \widehat{h}]^\top \in \mathcal{R}_{p, q, \beta}^{cc} \ ; \  
	\norm{\vect{f_{1}, f_{2}, f_{3}, g, \widetilde{h}, \widehat{h}}}_{\mathcal{R}_{p, q, \beta}^{cc}} \leqslant R \Big\},
\end{equation}
where $\mathcal{R}_{p, q, \beta}^{cc}$ is defined by \eqref{com0.5-deuze}. 
By using \cref{lemheat} with $f_3=0$ and $g=0$, we see that there exists a constant $C>0$ independent of $R$ such that
if
\begin{equation}\label{Rsmall0}
\norm{[\rho^{0}, v^{0}, \vartheta^{0}, \eta_{1}^{0}, \eta_{2}^{0}]^\top}_{\mathcal{J}_{p,q}} \leq CR,
\end{equation}
then $\mathcal{B}_{R}$ is a nonempty closed subset of the Banach space
\begin{multline}\label{com0.5-tres}
\mathcal{R}_{p,q,\beta} = \Big\{ [f_{1}, f_{2}, f_{3}, g, \widetilde{h}, \widehat{h}]^\top
\ ; \  f_{1} \in  L^{p}_\beta(0,\infty,W^{1,q}(\mf)), \quad f_{2} \in  L^{p}_\beta(0,\infty;L^{q}(\mf))^{3}, \\ 
f_{3} \in  L^{p}_\infty(0,\infty;L^{q}(\mf)),
\quad
g \in F^{(1-1/q)/2}_{p,q,\beta}(0,\infty;L^{q}(\partial \mf)) \cap L^{p}_\beta(0,\infty;W^{1-1/q,q}(\partial \mf)), \\
\widehat{h}\in L^{\infty}(0,\infty),\quad
\partial_t \widehat{h}\in L^p_{\beta}(0,\infty),\quad
\widetilde{h} \in L^{p}_\beta(0,\infty;L^{q}(\ms))  \Big\}.
\end{multline}

We define the map 
\begin{equation}\label{Xi-g}
\Xi_{R} : \mathcal{B}_{R} \longrightarrow \mathcal{B}_{R}, 
	\quad [f_{1}, f_{2}, f_{3}, g, \widetilde{h}, \widehat{h}]^\top \longmapsto [F_{1}, F_{2}, F_{3}, G, \widetilde{H}, \widehat{H}]^\top, 
\end{equation}
where $[\rho, v, \vartheta, \eta,\partial_t\eta]^\top$ is the solution to the system \eqref{eq:linear-FSa}--\eqref{eq:linear-FSc} 
associated with $[f_{1}, f_{2}, f_{3}, g, h]^\top$ and $[\rho^{0}, v^{0}, \vartheta^{0}, \eta_{1}^{0}, \eta_{2}^{0}]^\top,$ 
(see \cref{th:lin-lplq-global}), where $F_{1}, F_{2}, F_{3}$ and $G$ are given by \eqref{F1-g}--\eqref{G-G}
and where 
\begin{multline} \label{tilde-H}
\widetilde{H} =  - \mu\left[\frac{1}{\delta_{X}}\left(  \nabla v \mathbb{B}_{X}^{\top} +  \mathbb{B}_{X} \nabla v^{\top} \right) 
\begin{bmatrix}  -\nabla_{s} \eta \\ 1 \end{bmatrix}  - 2\mu \mathbb{D}(v) e_{3}  \right] \cdot e_{3} 
-\alpha  \Big(\frac{1}{\delta_{X}} \mathbb{B}_{X} - I_{3} \Big) : \nabla v  \\ +   R_{0} \widetilde{\rho} \widetilde{\vartheta} + R_{0} \widetilde{\rho} \widehat{\vartheta} + R_{0} \widehat{\rho} \widetilde{\vartheta},
\end{multline}
and 
\begin{equation} \label{hat-H}
\widehat{H} = R_{0} \widehat{\rho} \widehat{\vartheta}.
\end{equation}
In the above definitions, we have used the decomposition
of $\rho$ and $\vartheta$ given by \eqref{deco+++}-\eqref{decomp-rt}. We can check that $H$ defined by \eqref{H-G} satisfies
\begin{equation*}
H = \widetilde{H} + \widehat{H}.
\end{equation*}

In order to prove \cref{th:global-new}, 
it is enough to show that 
the mapping $\Xi_{R}$ is well defined, $\Xi(\mathcal{B}_{R}) \subset \mathcal{B}_{R}$ 
and $\Xi_{|\mathcal{B}_{R}}$ is a strict contraction, for $R$ small enough. 

Throughout this subsection, $C$ will be a positive constant depending on $p,q$ and $\beta$ but independent of $R,$ which may change from line to line.
To simplify the computations, we assume that $R \in (0,1).$ 

Since $2 < p< \infty$ and $3 < q < \infty,$ one has (see, for instance \cite[(7), p. 196]{Triebel})
$$
B^{2(1-1/p)}_{q,p}(\mf) \hookrightarrow W^{1,q}(\mf) \hookrightarrow L^{\infty}(\mf).
$$  
Therefore, from \cref{th:lin-lplq-global}, we obtain 
\begin{multline} \label{est-g-2}
\norm{v}_{L^{\infty}_{\beta}(0,\infty;W^{1,q}(\mf))^3} + \norm{\vartheta}_{L^{\infty}(0,\infty;W^{1,q}(\mf))} 
+ \norm{\nabla \vartheta}_{L^{p}_{\beta}(0,\infty;L^{\infty}(\mf))^{3}} 
\\
+\norm{\eta}_{L^{\infty}(0,\infty;W^{3,q}(\ms))} + \norm{\eta}_{L^{\infty}(0,\infty;C^{2}(\overline{\ms}))} \leqslant C R.
\end{multline} 
From the definition of $X^{0}$ from \eqref{geo6} and from \eqref{Rsmall0} we deduce that
\begin{equation*}
\norm{\nabla X^{0}  - I_{3}}_{W^{2,q}(\mf)^9} \leqslant C R. 
\end{equation*}
Using the above estimate and the definition of $X$ (see \eqref{Jacobi}) it follows that 
\begin{equation}\label{10:54}
\norm{\nabla X  - I_{3}}_{L^{\infty}(0,\infty;W^{1,q}(\mf))^9} \leqslant \norm{\nabla X^{0}  - I_{3}}_{W^{2,q}(\mf)^9}  
	+ C\norm{\nabla v}_{L^{p}_\beta(0,\infty;W^{1,q}(\mf))^9} \leqslant C R. 
\end{equation}
In particular, by choosing $R$ sufficiently small, we have 
\begin{equation*}
\norm{\nabla X  - I_{3}}_{L^{\infty}((0,\infty) \times \mf)^{9}} \leqslant \frac{1}{2}. 
\end{equation*}
Thus $X$ is a $C^{1}$-diffeomorphism for $R$ small enough. Moreover, by combining the above estimates  with \eqref{mat-Azv}
and using that $\partial_t X=v$, we also deduce
\begin{equation}\label{noli03-g}
\left\| \mathbb{B}_X- I_{3} \right\|_{L^{\infty}(0,\infty;W^{1,q}(\mf))^{9}}  \leqslant C R, 
\quad 
\norm{\partial_{t} \mathbb{B}_{X}}_{L^{p}(0,\infty;W^{1,q}(\mf))^{9}} \leqslant CR, 
\quad 
\left\| \mathbb{B}_X \right\|_{L^{\infty}(0,\infty;W^{1,q}(\mf))^{9}}  \leqslant C, 
\end{equation}
\begin{equation} \label{noli03-g-2}
\left\| \delta_X- 1 \right\|_{L^{\infty}(0,\infty;W^{1,q}(\mf))}\leqslant CR,
\quad  
\norm{\partial_{t} \delta_{X}}_{L^{p}(0,\infty;W^{1,q}(\mf))} \leqslant CR,  
\quad 
\left\| \delta_X \right\|_{L^{\infty}(0,\infty;W^{1,q}(\mf))}  \leqslant C.
\end{equation}

Consequently, for $R$ small enough 
\begin{equation}\label{noli04-bis}
\delta_X\geq \frac{1}{2} \quad \mbox{ for all } (t, y) \in (0,\infty) \times \mf.
\end{equation}
We thus deduce
\begin{equation}\label{noli07-g}
\left\| \frac 1{\delta_X}- 1 \right\|_{L^{\infty}(0,\infty;W^{1,q}(\mf))} \leqslant CR,
\quad 
\left\| \partial_{t} \left(\frac 1{\delta_X} \right) \right\|_{L^{p}(0,\infty;W^{1,q}(\mf))} \leqslant C R, 
\quad 
\left\| \frac 1{\delta_X} \right\|_{L^{\infty}(0,\infty;W^{1,q}(\mf))} \leqslant C.
\end{equation}
Using the above estimates and \eqref{mat-Azv}, we also obtain
\begin{gather}
\left\| \mathbb{A}_{X}- I_{3}\right\|_{L^{\infty}(0,\infty;W^{1,q}(\mf))^{9}} \leqslant CR,
\quad
\left\| \mathbb{A}_{X}\right\|_{L^{\infty}(0,\infty;W^{1,q}(\mf))^{9}} \leqslant C,  \label{noli06-g} \\ 
\norm{\partial_{t} \mathbb{A}_{X}}_{L^{p}(0,\infty;W^{1,q}(\mf))^{9}} \leqslant CR, \quad \norm{\mathbb{A}_{X} - I_{3}}_{C^{1/p'}([0,\infty);W^{1,q}(\mf))^{9}} \leqslant CR.
\end{gather}
For more details about the proof of the above estimates, we refer to \cite[Lemma 3.19]{HMTT}. 

From \eqref{G-G} and \eqref{ci03}, we notice that
\begin{equation} 
{G}( \rho,  v, \vartheta, \eta)_{|t=0}  = \left(  I_{3} - \mathbb{A}^0\right) \nabla \vartheta^0\cdot n=\frac{\partial \vartheta^0}{\partial n} \quad 
\text{on} \ \partial \mf.
\end{equation}
Using the above estimates we deduce that $F_{1}, F_{2}, F_{3}, G$
 and $\widetilde H$, $\widehat H$ defined by \eqref{F1-g}--\eqref{G-G} and \eqref{tilde-H}--\eqref{hat-H} satisfy the estimate 
 \begin{equation} \label{est-g-4}
 \norm{[F_{1}, F_{2}, F_{3}, G,   \widetilde{H}, \widehat{H}]^\top}_{\mathcal{R}_{p, q, \beta}^{cc}} \leqslant C R^{2}.
 \end{equation}
To details on the proof of \eqref{est-g-4} can be found in \cite[Proposition 3.20]{HMTT}. 
This shows that $\Xi(\mathcal{B}_{R}) \subset \mathcal{B}_{R}$ for $R$ small enough. 

To show that ${{\Xi}_{R}}|_{\mathcal{B}_{R}}$ is a strict contraction, we proceed similarly: we consider 
$$
\vect{f_{1}^{(i)}, f_{2}^{(i)}, f_{3}^{(i)}, g^{(i)},   \widetilde{h}^{(i)}, \widehat{h}^{(i)}} \in \mathcal{B}_{R}, \quad i=1,2
$$
and we denote by $[\rho^{(i)}, v^{(i)}, \vartheta^{(i)}, \eta^{(i)},\partial_t\eta^{(i)}]^\top$ the solutions to the system \eqref{eq:linear-FSa}--\eqref{eq:linear-FSa} 
associated with $[f_{1}^{(i)}, f_{2}^{(i)}, f_{3}^{(i)}, g^{(i)},   \widetilde{h}^{(i)}, \widehat{h}^{(i)}]^\top$ 
and $[\rho^{0}, v^{0}, \vartheta^{0}, \eta_{1}^{0}, \eta_{2}^{0}]^\top$ 
(see \cref{th:lin-lplq-global}).
We can thus define
$$
\vect{F_{1}^{(i)}, F_{2}^{(i)}, F_{3}^{(i)}, G^{(i)}, \widetilde{H}^{(i)}, \widehat{H}^{(i)}}:=
	\Xi_{R}\left(\vect{f_{1}^{(i)}, f_{2}^{(i)}, f_{3}^{(i)}, g^{(i)},   \widetilde{h}^{(i)}, \widehat{h}^{(i)}}\right).
$$

We also write
$$
[f_{1}, f_{2}, f_{3}, g, \widetilde{h}, \widehat{h}]^\top
=
[f_{1}^{(1)}, f_{2}^{(1)}, f_{3}^{(1)}, g^{(1)},    \widetilde{h}^{(1)}, \widehat{h}^{(1)}]^\top
-
[f_{1}^{(2)}, f_{2}^{(2)}, f_{3}^{(2)}, g^{(2)},    \widetilde{h}^{(2)}, \widehat{h}^{(2)}]^\top,
$$
$$
[\rho, v, \vartheta, \eta]^\top
=
[\rho^{(1)}, v^{(1)}, \vartheta^{(1)}, \eta^{(1)}]^\top
-
[\rho^{(2)}, v^{(2)}, \vartheta^{(2)}, \eta^{(2)}]^\top,
$$
$$
[\widetilde \rho, \widetilde \vartheta]^{\top} = [\widetilde \rho^{(1)}, \widetilde \vartheta^{(1)}]^{\top} - [\widetilde \rho^{(2)}, \widetilde \vartheta^{(2)}]^{\top}, \quad 
[\widehat \rho, \widehat \vartheta]^{\top} = [\widehat \rho^{(1)}, \widehat \vartheta^{(1)}]^{\top} - [\widehat \rho^{(2)}, \widehat \vartheta^{(2)}]^{\top}.
$$

Therefore, from \cref{th:lin-lplq-global}, we obtain 
\begin{multline} \label{est-g-2-diff}
\norm{v}_{L^{\infty}_{\beta}(0,\infty;W^{1,q}(\mf))^3} + \norm{\vartheta}_{L^{\infty}(0,\infty;W^{1,q}(\mf))} 
+ \norm{\nabla \vartheta}_{L^{p}_{\beta}(0,\infty;L^{\infty}(\mf))^{3}} 
\\
+\norm{\eta}_{L^{\infty}(0,\infty;W^{3,q}(\ms))} + \norm{\eta}_{L^{\infty}(0,\infty;C^{2}(\overline{\ms}))} 
	\leqslant C \norm{[f_{1}, f_{2}, f_{3}, g, \widetilde{h}, \widehat{h}]^\top}_{\mathcal{R}_{p, q, \beta}^{cc}}
\end{multline} 
and 
\begin{multline}
\norm{\widetilde{\rho}}_{W^{1,p}_{\beta}(0,\infty;W^{1,q}(\mf))} + \norm{\widetilde{\vartheta}}_{W^{1,2}_{p,q,\beta}((0,\infty);\mf)} + \norm{\vect{\widehat{\rho}, \widehat{\vartheta}}}_{L^{\infty}(0,\infty)^{2}} + \norm{\vect{\partial_{t}\widehat{\rho}, \partial_{t}\widehat{\vartheta}}}_{L^{p}_{\beta}(0,\infty)^{2}} \\
\leq C
 \Big( \norm{[f_{1}, f_{2}, f_{3}, g, \widetilde{h}, \widehat{h}]^\top}_{\mathcal{R}_{p, q, \beta}^{cc}} 
 \Big). 
\end{multline}

In particular, from \eqref{Jacobi}, 
\begin{equation}
\norm{\nabla X^{(1)}-\nabla X^{(2)}}_{L^{\infty}(0,\infty;W^{1,q}(\mf))^9} 
	\leqslant C \norm{[f_{1}, f_{2}, f_{3}, g, \widetilde{h}, \widehat{h}]^\top}_{\mathcal{R}_{p, q, \beta}^{cc}}.
\end{equation}
By combining the above estimates  with \eqref{mat-Azv} and with \eqref{10:54}, we deduce
\begin{multline}\label{noli03-g-diff}
\left\| \mathbb{B}_{X^{(1)}}- \mathbb{B}_{X^{(2)}} \right\|_{L^{\infty}(0,\infty;W^{1,q}(\mf))^9}  
+\norm{\partial_{t} \mathbb{B}_{X^{(1)}}- \partial_{t}\mathbb{B}_{X^{(2)}}}_{L^{p}(0,\infty;W^{1,q}(\mf))^9} 
\\
+\left\| \delta_{X^{(1)}}- \delta_{X^{(2)}} \right\|_{L^{\infty}(0,\infty;W^{1,q}(\mf))}
+\norm{\partial_{t} \delta_{X^{(1)}}- \partial_{t}\delta_{X^{(2)}}}_{L^{p}(0,\infty;W^{1,q}(\mf))}
\\
+\left\| \frac{1}{\delta_{X^{(1)}}}- \frac{1}{\delta_{X^{(2)}}} \right\|_{L^{\infty}(0,\infty;W^{1,q}(\mf))}
+\norm{\partial_{t} \left(\frac{1}{\delta_{X^{(1)}}}\right)- \partial_{t}\left(\frac{1}{\delta_{X^{(2)}}}\right)}_{L^{p}(0,\infty;W^{1,q}(\mf))}
\\
+\left\| \mathbb{A}_{X^{(1)}}- \mathbb{A}_{X^{(2)}}\right\|_{L^{\infty}(0,\infty;W^{1,q}(\mf))^{9}} 
+\norm{\partial_{t} \mathbb{A}_{X^{(1)}}- \partial_{t}\mathbb{A}_{X^{(2)}}}_{L^{p}(0,\infty;W^{1,q}(\mf))^{9}}
\\
+\norm{\partial_{t} \mathbb{A}_{X^{(1)}}- \partial_{t}\mathbb{A}_{X^{(2)}}}_{C^{1/p'}([0,\infty);W^{1,q}(\mf))^{9}}
\\
\leq CR \norm{[f_{1}, f_{2}, f_{3}, g, \widetilde{h}, \widehat{h}]^\top}_{\mathcal{R}_{p, q, \beta}^{cc}}.
\end{multline}
Using the above estimates we deduce that $F_{1}, F_{2}, F_{3}, G$
 and $\widetilde H$, $\widehat H$ defined by \eqref{F1-g}--\eqref{G-G}, \eqref{tilde-H}, \eqref{hat-H} satisfy the estimate 
\begin{multline}\label{noli89-glob}
\left\| F_1^{(1)}-F_1^{(2)}\right\|_{L^{p}_\beta(0,\infty;W^{1,q}(\mf))}
+\left\| F_2^{(1)}-F_2^{(2)}\right\|_{L^{p}_\beta(0,\infty;L^{q}(\mf))^{3}}   
+\left\| F_3^{(1)}-F_3^{(2)}\right\|_{L^{p}_\beta(0,\infty;L^{q}(\mf))}
\\
+\left\|
G_1^{(1)}-G_1^{(2)}
\right\|_{F^{(1-1/q)/2}_{p,q,\beta}(0,\infty;L^{q}(\partial \mf)) \cap L^{p}_\beta(0,\infty;W^{1-1/q,q}(\partial \mf))} 
+\left\| \widetilde H^{(1)}-\widetilde H^{(2)}
\right\|_{L^{p}_\beta(0,\infty;L^{q}(\ms))}
\\
+\left\| 
\widehat H^{(1)}-\widehat H^{(2)}
\right\|_{L^{\infty}(0,\infty)}
+\left\| \partial_t \widehat H^{(1)}-\partial_t \widehat H^{(2)}
\right\|_{L^p_{\beta}(0,\infty)}
\\
\leqslant C R \norm{[f_{1}, f_{2}, f_{3}, g, {h}]^\top}_{\mathcal{R}_{p, q, \beta}^{cc}}.
\end{multline}
This shows that $\Xi_{|\mathcal{B}_{R}}$ is a strict contraction, for $R$ small enough. This completes the proof of \cref{th:global-new} and \cref{th:global-main}.
\qed

\bibliographystyle{siam}
\bibliography{ref-cns-Lp}

\end{document}